\spnewtheorem{main-theorem}[theorem]{Main Theorem}{\bfseries}{\itshape}
\spnewtheorem{intuition}{Intuition}{\bfseries}{\itshape}
\newcommand{\keywords}[1]{\par\addvspace\baselineskip\noindent\keywordname\enspace\ignorespaces#1}
\newcommand*{\define}[1]{\emph{#1}}
\providecommand\iff{\DOTSB\;\Longleftrightarrow\;}
\providecommand\implies{\DOTSB\;\Longrightarrow\;}
\DeclarePairedDelimiter\parens{\lparen}{\rparen}
\DeclarePairedDelimiter\abs{\lvert}{\rvert}
\DeclarePairedDelimiter\norm{\lVert}{\rVert}
\DeclarePairedDelimiterX\inner[2]{\langle}{\rangle}{#1,#2} 
\DeclarePairedDelimiter\set{\{}{\}} 
\DeclarePairedDelimiter\net{\{}{\}}
\DeclarePairedDelimiter\family{\{}{\}}
\mathchardef\breakingcomma\mathcode`\,
\newcommand*\ntuple[1]{\lparen\mathcode`\,=\string"8000 #1\rparen}
\newcommand*{\N}{\mathbb{N}}
\newcommand*{\R}{\mathbb{R}}
\newcommand*{\K}{\mathbb{K}}
\DeclareMathOperator{\boundeds}{\ell^\infty} 
\DeclareMathOperator{\simples}{\mathcal{E}} 
\DeclareMathOperator{\pmeasures}{\mathcal{PM}}
\DeclareMathOperator{\means}{\mathcal{M}}
\DeclareMathOperator{\automorphisms}{Aut}
\newcommand*{\unityfnc}{\mathds{1}} 
\newcommand*{\nullityfnc}{0} 
\DeclareMathOperator{\identity}{id}
\DeclareMathOperator{\Exists}{\exists}
\DeclareMathOperator{\ForEach}{\forall}
\newcommand*\Holds{:}
\newcommand*\SuchThat{:}
\newcommand*\leftaction{\triangleright}
\newcommand*\rightsemiaction{\mathbin{\protect\scalerel*{\trianglelefteqslant}{\rhd}}} 
\providecommand*{\Dashv}{%
  \mathrel{%
    \mathpalette\@Dashv\vDash
  }%
}
\newcommand*{\@Dashv}[2]{%
  \reflectbox{$\m@th#1#2$}%
}
\newcommand*\measureamact{\mathbin{\protect\scalerel*{\Dashv}{\rhd}}}
\newcommand*\measureamactleft{\mathbin{\protect\scalerel*{\vDash}{\rhd}}}
\newcommand{\VDash}{%
  \mathrel{
    \text{\clipbox{0pt 0pt {.8\width} 0pt}{$\Vdash$}}
    \mkern.9mu
    \text{\adjustbox{width=.87\width,height=\height}{$\vDash$}}
  }
}
\providecommand*{\DashV}{%
  \mathrel{%
    \mathpalette\@DashV\VDash
  }%
}
\newcommand*{\@DashV}[2]{%
  \reflectbox{$\m@th#1#2$}%
}
\providecommand*{\dashV}{%
  \mathrel{%
    \mathpalette\@dashV\Vdash
  }%
}
\newcommand*{\@dashV}[2]{%
  \reflectbox{$\m@th#1#2$}%
}
\newcommand*\funcamact{\mathbin{\protect\scalerel*{\dashV}{\rhd}}}
\newcommand*\meanamact{\mathbin{\protect\scalerel*{\DashV}{\rhd}}}
\newcommand*\funcamactleft{\mathbin{\protect\scalerel*{\Vdash}{\rhd}}}
\newcommand*\meanamactleft{\mathbin{\protect\scalerel*{\VDash}{\rhd}}}
\DeclareMathOperator{\Sym}{Sym}
\DeclareMathOperator{\vecspan}{span}
\DeclareMathOperator{\ev}{ev}
\def\moverlay{\mathpalette\mov@rlay}
\def\mov@rlay#1#2{\leavevmode\vtop{%
   \baselineskip\z@skip \lineskiplimit-\maxdimen
   \ialign{\hfil$\m@th#1##$\hfil\cr#2\crcr}}}
\newcommand{\charfusion}[3][\mathord]{
    #1{\ifx#1\mathop\vphantom{#2}\fi
        \mathpalette\mov@rlay{#2\cr#3}
      }
    \ifx#1\mathop\expandafter\displaylimits\fi}
\newcommand{\cupdot}{\charfusion[\mathbin]{\cup}{\cdot}}
\newcommand{\bigcupdot}{\charfusion[\mathop]{\bigcup}{\cdot}}
\DeclareMathOperator{\powerset}{\mathcal{P}}
\newcommand*\suchthat{\mid} 
\newcommand*\from{\colon} 
\newcommand*\after{\circ}
\newcommand*\isomorphic{\cong}
\newcommand*\quotient{\slash}
\renewcommand*{\restriction}{\mathord{\upharpoonright}}
\newcommand*{\blank}{\mathord{\_}} 
\newcommand{\closure}[1]{\mkern 1.5mu\overline{\mkern-1.5mu#1\mkern-1.5mu}\mkern 1.5mu} 
\newcommand*{\graffito}[1]{}
\newcommand*{\mathnote}[1]{}
\renewcommand*{\index}[1]{}
\begin{document}

  \mainmatter

  \title{Right Amenable Left Group Sets and the Tarski-Følner Theorem}
  \authorrunning{Right Amenable Left Group Sets and the Tarski-Følner Theorem}
  \author{Simon Wacker}

  \institute{%
    Karlsruhe Institute of Technology\\
    \mails\\
    \url{http://www.kit.edu}%
  }

  \maketitle

  \begin{abstract}
    We introduce right amenability, right Følner nets, and right paradoxical decompositions for left homogeneous spaces and prove the Tarski-Følner theorem for left homogeneous spaces with finite stabilisers. It states that right amenability, the existence of right Følner nets, and the non-existence of right paradoxical decompositions are equivalent.
    \keywords{amenability, group actions, Tarski-Følner theorem}
  \end{abstract}

  The notion of amenability for groups was introduced by John von Neumann in 1929, see the paper \enquote{Zur allgemeinen Theorie des Maßes}\cite{von-neumann:1929}. It generalises the notion of finiteness. A group $G$ is \emph{left} or \emph{right amenable} if there is a finitely additive probability measure on $\powerset(G)$ that is invariant under left and right multiplication respectively. Groups are left amenable if and only if they are right amenable. A group is \emph{amenable} if it is left or right amenable.

  The definitions of left and right amenability generalise to left and right group sets respectively. A left group set $\ntuple{M, G, \leftaction}$ is \emph{left amenable} if there is a finitely additive probability measure on $\powerset(M)$ that is invariant under $\leftaction$. There is in general no natural action on the right that is to a left group action what right multiplication is to left group multiplication. Therefore, for a left group set there is no natural notion of right amenability.

  A transitive left group action $\leftaction$ of $G$ on $M$ induces, for each element $m_0 \in M$ and each family $\family{g_{m_0, m}}_{m \in M}$ of elements in $G$ such that, for each point $m \in M$, we have $g_{m_0, m} \leftaction m_0 = m$, a right quotient set semi-action $\rightsemiaction$ of $G \quotient G_0$ on $M$ with defect $G_0$ given by $m \rightsemiaction g G_0 = g_{m_0, m} g g_{m_0, m}^{-1} \leftaction m$, where $G_0$ is the stabiliser of $m_0$ under $\leftaction$. Each of these right semi-actions is to the left group action what right multiplication is to left group multiplication. They occur in the definition of global transition functions of cellular automata over left homogeneous spaces as defined in \cite{wacker:automata:2016}. A \emph{coordinate system} is a choice of $m_0$ and $\family{g_{m_0, m}}_{m \in M}$. 

  A left homogeneous space is \emph{right amenable} if there is a coordinate system such that there is a finitely additive probability measure on $\powerset(M)$ that is semi-invariant under $\rightsemiaction$. For example finite left homogeneous spaces, abelian groups, and finitely right generated left homogeneous spaces of sub-exponential growth are right amenable, in particular, quotients of finitely generated groups of sub-exponential growth by finite subgroups acted on by left multiplication.

  A net of non-empty and finite subsets of $M$ is a \emph{right Følner net} if, broadly speaking, these subsets are asymptotically invariant under $\rightsemiaction$. A finite subset $E$ of $G \quotient G_0$ and two partitions $\family{A_e}_{e \in E}$ and $\family{B_e}_{e \in E}$ of $M$ constitute a \emph{right paradoxical decomposition} if the map $\blank \rightsemiaction e$ is injective on $A_e$ and $B_e$, and the family $\family{(A_e \rightsemiaction e) \cupdot (B_e \rightsemiaction e)}_{e \in E}$ is a partition of $M$. The Tarski-Følner theorem states that right amenability, the existence of right Følner nets, and the non-existence of right paradoxical decompositions are equivalent.

  The Tarski alternative theorem and the theorem of Følner, which constitute the Tarski-Følner theorem, are famous theorems by Alfred Tarski and Erling Følner from 1938 and 1955, see the papers \enquote{Algebraische Fassung des Maßproblems}\cite{tarski:1938} and \enquote{On groups with full Banach mean value}\cite{folner:1955}. This paper is greatly inspired by the monograph \enquote{Cellular Automata and Groups}\cite{ceccherini-silberstein:coornaert:2010} by Tullio Ceccherini-Silberstein and Michel Coornaert. 

  For a right amenable left homogeneous space with finite stabilisers we may choose a right Følner net. Using this net we show in \cite{wacker:garden:2016} that the Garden of Eden theorem holds for such spaces. It states that a cellular automaton with finite set of states and finite neighbourhood over such a space is surjective if and only if it is pre-injective.

  In Sect.~\ref{sec:measures-and-means} we introduce finitely additive probability measures and means, and kind of right semi-actions on them. In Sect.~\ref{sec:right-amenability} we introduce right amenability. In Sect.~\ref{sec:right-folner-nets} we introduce right Følner nets. In Sect.~\ref{sec:right-paradoxical-decomposition} we introduce right paradoxical decompositions. In Sect.~\ref{sec:tarski-folner-theorem} we prove the Tarski alternative theorem and the theorem of Følner. And in Sect.~\ref{sec:left-vs-right} we show under which assumptions left implies right amenability and give two examples of right amenable left homogeneous spaces.

  \subsubsection{Preliminary Notions.} A \define{left group set} is a triple $\ntuple{M, G, \leftaction}$, where $M$ is a set, $G$ is a group, and $\leftaction$ is a map from $G \times M$ to $M$, called \define{left group action of $G$ on $M$}, such that $G \to \Sym(M)$, $g \mapsto [g \leftaction \blank]$, is a group homomorphism. The action $\leftaction$ is \define{transitive} if $M$ is non-empty and for each $m \in M$ the map $\blank \leftaction m$ is surjective; and \define{free} if for each $m \in M$ the map $\blank \leftaction m$ is injective. For each $m \in M$, the set $G \leftaction m$ is the \define{orbit of $m$}, the set $G_m = (\blank \leftaction m)^{-1}(m)$ is the \define{stabiliser of $m$}, and, for each $m' \in M$, the set $G_{m, m'} = (\blank \leftaction m)^{-1}(m')$ is the \define{transporter of $m$ to $m'$}.

  A \define{left homogeneous space} is a left group set $\mathcal{M} = \ntuple{M, G, \leftaction}$ such that $\leftaction$ is transitive. A \define{coordinate system for $\mathcal{M}$} is a tuple $\mathcal{K} = \ntuple{m_0, \family{g_{m_0, m}}_{m \in M}}$, where $m_0 \in M$ and, for each $m \in M$, we have $g_{m_0, m} \leftaction m_0 = m$. The stabiliser $G_{m_0}$ is denoted by $G_0$. The tuple $\mathcal{R} = \ntuple{\mathcal{M}, \mathcal{K}}$ is a \define{cell space}. The set $\set{g G_0 \suchthat g \in G}$ of left cosets of $G_0$ in $G$ is denoted by $G \quotient G_0$. The map $\rightsemiaction \from M \times G \quotient G_0 \to M$, $(m, g G_0) \mapsto g_{m_0, m} g \leftaction m_0$ is a \define{right semi-action of $G \quotient G_0$ on $M$ with defect $G_0$}, which means that
  \begin{gather*}
    \ForEach m \in M \Holds m \rightsemiaction G_0 = m,\\
    \ForEach m \in M \ForEach g \in G \Exists g_0 \in G_0 \SuchThat \ForEach \mathfrak{g}' \in G \quotient G_0 \Holds
          m \rightsemiaction g \cdot \mathfrak{g}' = (m \rightsemiaction g G_0) \rightsemiaction g_0 \cdot \mathfrak{g}'.
  \end{gather*}
  It is \define{transitive}, which means that the set $M$ is non-empty and for each $m \in M$ the map $m \rightsemiaction \blank$ is surjective; and \define{free}, which means that for each $m \in M$ the map $m \rightsemiaction \blank$ is injective; and \define{semi-commutes with $\leftaction$}, which means that
  \begin{equation*}
    \ForEach m \in M \ForEach g \in G \Exists g_0 \in G_0 \SuchThat \ForEach \mathfrak{g}' \in G \quotient G_0 \Holds
          (g \leftaction m) \rightsemiaction \mathfrak{g}' = g \leftaction (m \rightsemiaction g_0 \cdot \mathfrak{g}').
  \end{equation*}
  For each $A \subseteq M$, let $\unityfnc_A \from M \to \set{0, 1}$ be the indicator function of $A$.

  \section{Finitely Additive Probability Measures and Means}
  \label{sec:measures-and-means}

  In this section, let $\mathcal{R} = \ntuple{\ntuple{M, G, \leftaction}, \ntuple{m_0, \family{g_{m_0, m}}_{m \in M}}}$ be a cell space.


  \begin{definition} 
    Let $\mu \from \powerset(M) \to [0,1]$ be a map. It is called
    \begin{enumerate}
      \item \define{normalised}\graffito{normalised} if and only if $\mu(M) = 1$;
      \item \define{finitely additive}\graffito{finitely additive} if and only if, 
            \begin{equation*}
              \ForEach A \subseteq M \ForEach B \subseteq M \Holds \parens[\big]{A \cap B = \emptyset \implies \mu(A \cup B) = \mu(A) + \mu(B)};
            \end{equation*}
      \item \define{finitely additive probability measure on $M$}\graffito{finitely additive probability measure $\mu$ on $M$} if and only if it is normalised and finitely additive.
    \end{enumerate}
    The set of all finitely additive probability measures on $M$ is denoted by $\pmeasures(M)$\graffito{$\pmeasures(M)$}.
  \end{definition}

  \begin{definition}
    The group $G$ acts on $[0,1]^{\powerset(M)}$ on the left by
    \begin{align*} 
      \measureamactleft \from G \times [0,1]^{\powerset(M)} &\to     [0,1]^{\powerset(M)}, \mathnote{$\measureamactleft$}\\
                                               (g, \varphi) &\mapsto [A \mapsto \varphi(g^{-1} \leftaction A)],
    \end{align*}
    such that $G \measureamactleft \pmeasures(M) \subseteq \pmeasures(M)$. 
  \end{definition}


  \begin{definition} 
    The quotient set $G \quotient G_0$ kind of semi-acts on $[0,1]^{\powerset(M)}$ on the right by
    \begin{align*}
      \measureamact \from [0,1]^{\powerset(M)} \times G \quotient G_0 &\to     [0,1]^{\powerset(M)}, \mathnote{$\measureamact$}\\
                                              (\varphi, \mathfrak{g}) &\mapsto [A \mapsto \varphi(A \rightsemiaction \mathfrak{g})].
    \end{align*}
  \end{definition}



  \begin{definition}
    Let $\varphi$ be an element of $[0,1]^{\powerset(M)}$. It is called \define{$\measureamact$-semi-invariant}\graffito{$\measureamact$-semi-invariant} if and only if, for each element $\mathfrak{g} \in G \quotient G_0$ and each subset $A$ of $M$ such that the map $\blank \rightsemiaction \mathfrak{g}$ is injective on $A$, we have $(\varphi \measureamact \mathfrak{g})(A) = \varphi(A)$.
  \end{definition}

  \begin{remark} 
  \label{rem:groups:measureamact}
    Let $\mathcal{R}$ be the cell space $\ntuple{\ntuple{G, G, \cdot}, \ntuple{e_G, \family{g}_{g \in G}}}$. Then, $G_0 = \set{e_G}$ and $\rightsemiaction = \cdot$. Hence, $\measureamact \from (\varphi, g) \mapsto [A \mapsto \varphi(A \cdot g)]$. Except for $g$ not being inverted, this is the right group action of $G$ on $\pmeasures(M)$ as defined in \cite[Sect.~4.3, Paragraph~4]{ceccherini-silberstein:coornaert:2010}. Moreover, for each element $g \in G$, the map $\blank \rightsemiaction g$ is injective. Hence, being $\measureamact$-semi-invariant is the same as being right-invariant as defined in \cite[Sect.~4.4, Paragraph~2]{ceccherini-silberstein:coornaert:2010}. 
  \end{remark}

  \begin{definition} 
    The vector space of bounded real-valued functions on $M$ with pointwise addition and scalar multiplication is denoted by $\boundeds(M)$,
    the supremum norm on $\boundeds(M)$ is denoted by $\norm{\blank}_\infty$,
    the topological dual space of $\boundeds(M)$ is denoted by $\boundeds(M)^*$,
    the pointwise partial order on $\boundeds(M)$ is denoted by $\leq$,
    and the constant function $[m \mapsto 0]$ is denoted by $\nullityfnc$. 
  \end{definition}



  \begin{definition} 
  \label{def:mean}
    Let $\nu \from \boundeds(M) \to \R$ be a map. It is called
    \begin{enumerate}
      \item \define{normalised}\graffito{normalised} if and only if $\nu(\unityfnc_M) = 1$;
      \item \define{non-negativity preserving} if and only if
            \begin{equation*}
              \ForEach f \in \boundeds(M) \Holds (f \geq \nullityfnc \implies \nu(f) \geq 0); \mathnote{non-negativity preserving}
            \end{equation*}
      \item \define{mean on $M$}\graffito{mean $\nu$ on $M$} if and only if it is linear, normalised, and non-negativity preserving. 
    \end{enumerate}
    The set of all means on $M$ is denoted by $\means(M)$\graffito{$\means(M)$}. 
  \end{definition}

  %

  \begin{definition} 
    Let $\Psi$ be a map from $\boundeds(M)$ to $\boundeds(M)$. It is called \define{non-negativity preserving}\graffito{non-negativity preserving} if and only if 
    \begin{equation*}
      \ForEach f \in \boundeds(M) \Holds (f \geq \nullityfnc \implies \Psi(f) \geq 0).
    \end{equation*}
  \end{definition}

  \begin{lemma} 
  \label{lem:liberation-preimage}
    Let $G_0$ be finite, let $A$ be a finite subset of $M$, and let $\mathfrak{g}$ be an element of $G \quotient G_0$. Then, $\abs{(\blank \rightsemiaction \mathfrak{g})^{-1}(A)} \leq \abs{G_0} \cdot \abs{A}$.
  \end{lemma}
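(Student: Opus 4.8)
The plan is to bound the fibre of $\blank \rightsemiaction \mathfrak{g}$ over a single point by $\abs{G_0}$ and then sum over $A$. Since $\blank \rightsemiaction \mathfrak{g}$ is a map from $M$ to $M$, its fibres are pairwise disjoint, so $\parens[\big]{\blank \rightsemiaction \mathfrak{g}}^{-1}(A)$ is the disjoint union of the fibres $\parens[\big]{\blank \rightsemiaction \mathfrak{g}}^{-1}(m')$ for $m' \in A$, whence $\abs[\big]{\parens[\big]{\blank \rightsemiaction \mathfrak{g}}^{-1}(A)} = \sum_{m' \in A} \abs[\big]{\parens[\big]{\blank \rightsemiaction \mathfrak{g}}^{-1}(m')}$. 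Therefore it suffices to show that $\abs[\big]{\parens[\big]{\blank \rightsemiaction \mathfrak{g}}^{-1}(m')} \leq \abs{G_0}$ for each point $m' \in M$.

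Fix such an $m'$ and write $\mathfrak{g} = g G_0$ with $g \in G$. By the definition of the right semi-action, $m \rightsemiaction \mathfrak{g} = \parens{g_{m_0, m} \cdot g} \leftaction m_0$ for every $m \in M$, so a point $m$ lies in $\parens[\big]{\blank \rightsemiaction \mathfrak{g}}^{-1}(m')$ if and only if $g_{m_0, m} \cdot g \in G_{m_0, m'}$. This motivates the map
\begin{equation*}
  \iota \from \parens[\big]{\blank \rightsemiaction \mathfrak{g}}^{-1}(m') \to G_{m_0, m'}, \quad m \mapsto g_{m_0, m} \cdot g,
\end{equation*}
which is well-defined by the preceding observation. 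It is injective: if $\iota(m_1) = \iota(m_2)$, then cancelling $g$ on the right gives $g_{m_0, m_1} = g_{m_0, m_2}$, and applying $\blank \leftaction m_0$ to both sides yields $m_1 = g_{m_0, m_1} \leftaction m_0 = g_{m_0, m_2} \leftaction m_0 = m_2$. Hence $\abs[\big]{\parens[\big]{\blank \rightsemiaction \mathfrak{g}}^{-1}(m')} \leq \abs{G_{m_0, m'}}$.

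It remains to observe that $\abs{G_{m_0, m'}} = \abs{G_0}$. Since the underlying space of a cell space is a left homogeneous space, $\leftaction$ is transitive, so $G_{m_0, m'} = \parens{\blank \leftaction m_0}^{-1}(m')$ is non-empty; and every non-empty fibre of the orbit map $\blank \leftaction m_0$ is a left coset of its stabiliser $G_{m_0, m_0} = G_0$ and thus has cardinality $\abs{G_0}$. Consequently each fibre $\parens[\big]{\blank \rightsemiaction \mathfrak{g}}^{-1}(m')$ has at most $\abs{G_0}$ elements, and summing over the $\abs{A}$ points of $A$ yields $\abs[\big]{\parens[\big]{\blank \rightsemiaction \mathfrak{g}}^{-1}(A)} \leq \abs{A} \cdot \abs{G_0}$. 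There is no genuine obstacle here; one only has to be slightly careful that $\iota$ really takes values in $G_{m_0, m'}$, which is precisely the fibre-membership criterion isolated above. The decisive point is that the assignment $m \mapsto g_{m_0, m}$ is injective, because applying $\blank \leftaction m_0$ recovers $m$ from $g_{m_0, m}$.
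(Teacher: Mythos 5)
Your proof is correct and takes essentially the same route as the paper's: both bound each single-point fibre of $\blank \rightsemiaction \mathfrak{g}$ by $\abs{G_0}$ by injecting it, via the injective assignment $m \mapsto g_{m_0, m}$ (up to right-multiplication by a representative of $\mathfrak{g}$), into a coset of cardinality $\abs{G_0}$, and then sum over the points of $A$. The only cosmetic difference is the target of the injection --- the paper uses the coset $g_{m_0, m'} G_m$ of a conjugate stabiliser $G_m$, whereas you use the transporter $G_{m_0, m'}$, a left coset of $G_0$ itself.
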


  \begin{proof}
    Let $a \in A$ such that $(\blank \rightsemiaction \mathfrak{g})^{-1}(a) \neq \emptyset$. There are $m$ and $m' \in M$ such that $G_{m_0,m} = \mathfrak{g}$ and $m' \rightsemiaction \mathfrak{g} = a$. For each $m'' \in M$, we have $m'' \rightsemiaction \mathfrak{g} = g_{m_0, m''} \leftaction m$ and hence 
    \begin{align*} 
      m'' \rightsemiaction \mathfrak{g} = a
      &\iff m'' \rightsemiaction \mathfrak{g} = m' \rightsemiaction \mathfrak{g}\\
      &\iff g_{m_0, m'}^{-1} g_{m_0, m''} \leftaction m = m\\
      &\iff g_{m_0, m'}^{-1} g_{m_0, m''} \in G_m\\
      &\iff g_{m_0, m''} \in g_{m_0, m'} G_m.
    \end{align*}
    Moreover, for each $m''$ and each $m''' \in M$ with $m'' \neq m'''$, we have $g_{m_0, m''} \neq g_{m_0, m'''}$. 
    Thus,
    \begin{align*}
      \abs{(\blank \rightsemiaction \mathfrak{g})^{-1}(a)}
      &=    \abs{\set{m'' \in M \suchthat m'' \rightsemiaction \mathfrak{g} = a}}\\
      &=    \abs{\set{m'' \in M \suchthat g_{m_0, m''} \in g_{m_0, m'} G_m}}\\
      &\leq \abs{g_{m_0, m'} G_m}\\
      &=    \abs{G_m}\\
      &=    \abs{G_0}.
    \end{align*}
    Therefore, because $(\blank \rightsemiaction \mathfrak{g})^{-1}(A) = \bigcup_{a \in A} (\blank \rightsemiaction \mathfrak{g})^{-1}(a)$, we have $\abs{(\blank \rightsemiaction \mathfrak{g})^{-1}(A)} \leq \abs{G_0} \cdot \abs{A}$. \qed
  \end{proof}

  \begin{definition}
    The group $G$ acts on $\boundeds(M)$ on the left by
    \begin{align*}
      \funcamactleft \from G \times \boundeds(M) &\to     \boundeds(M), \mathnote{$\funcamactleft$}\\
                                          (g, f) &\mapsto [m \mapsto f(g^{-1} \leftaction m)].
    \end{align*}
  \end{definition}


  \begin{lemma}
  \label{lem:kind-of-semi-action-of-quotient-on-boundeds}
    Let $G_0$ be finite. The quotient set $G \quotient G_0$ kind of semi-acts on $\boundeds(M)$ on the right by
    \begin{align*}
      \funcamact \from \boundeds(M) \times G \quotient G_0 &\to     \boundeds(M), \mathnote{$\funcamact$}\\
                                         (f, \mathfrak{g}) &\mapsto [m \mapsto \sum_{m' \in (\blank \rightsemiaction \mathfrak{g})^{-1}(m)} f(m')],
    \end{align*}
    such that, for each tuple $(f, \mathfrak{g}) \in \boundeds(M) \times G \quotient G_0$, we have $\norm{f \funcamact \mathfrak{g}}_\infty \leq \abs{G_0} \cdot \norm{f}_\infty$.
  \end{lemma}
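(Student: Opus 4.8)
The plan is to verify, in order, that $\funcamact$ is well defined (lands in $\boundeds(M)$), that the claimed norm bound holds, and finally that $\funcamact$ satisfies the two defining identities of a right semi-action with defect $G_0$. The finiteness of $G_0$ enters precisely through Lemma~\ref{lem:liberation-preimage}, which guarantees the preimages $(\blank \rightsemiaction \mathfrak{g})^{-1}(m)$ are finite so the defining sum makes sense.

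First I would check boundedness and the norm estimate simultaneously. Fix $(f, \mathfrak{g})$ and $m \in M$. By Lemma~\ref{lem:liberation-preimage} applied to the singleton $A = \set{m}$, the set $(\blank \rightsemiaction \mathfrak{g})^{-1}(m)$ has at most $\abs{G_0}$ elements, so
\begin{equation*}
  \abs[\big]{(f \funcamact \mathfrak{g})(m)}
  = \abs[\bigg]{\sum_{m' \in (\blank \rightsemiaction \mathfrak{g})^{-1}(m)} f(m')}
  \leq \sum_{m' \in (\blank \rightsemiaction \mathfrak{g})^{-1}(m)} \abs{f(m')}
  \leq \abs{G_0} \cdot \norm{f}_\infty.
\end{equation*}
Taking the supremum over $m$ shows $f \funcamact \mathfrak{g} \in \boundeds(M)$ and $\norm{f \funcamact \mathfrak{g}}_\infty \leq \abs{G_0} \cdot \norm{f}_\infty$, which is the asserted inequality.

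Next I would verify the two semi-action axioms. For the first, $f \funcamact G_0 = f$: since $\blank \rightsemiaction G_0 = \identity_M$ by the defining property of $\rightsemiaction$, each preimage $(\blank \rightsemiaction G_0)^{-1}(m)$ equals $\set{m}$, so the sum collapses to $f(m)$. For the second, given $m$ (hence $\mathfrak{g}' \in G \quotient G_0$ varying) and $g \in G$, I would take the element $g_0 \in G_0$ supplied by the right-semi-action property of $\rightsemiaction$, namely $m' \rightsemiaction g \cdot \mathfrak{g}' = (m' \rightsemiaction g G_0) \rightsemiaction g_0 \cdot \mathfrak{g}'$ for all $m'$; one must check this $g_0$ can be chosen uniformly enough that the preimage of a point under $\blank \rightsemiaction g\cdot\mathfrak{g}'$ decomposes as a disjoint union, over the fibre of $\blank\rightsemiaction gG_0$, of the preimages under $\blank \rightsemiaction g_0\cdot\mathfrak{g}'$, giving $f \funcamact (g \cdot \mathfrak{g}') = (f \funcamact g G_0) \funcamact g_0 \cdot \mathfrak{g}'$ by interchanging the (finite) sums. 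The bookkeeping of which $g_0$ works and the disjointness of the fibre decomposition is the main obstacle; everything else is routine, and the interchange of summation is legitimate because all the index sets involved are finite by Lemma~\ref{lem:liberation-preimage}.
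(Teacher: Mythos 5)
Your first computation---finiteness of the fibre via Lemma~\ref{lem:liberation-preimage} applied to the singleton $\set{m}$, the triangle inequality, and the bound $\abs{(\blank \rightsemiaction \mathfrak{g})^{-1}(m)} \leq \abs{G_0}$---is exactly the paper's proof, and it is all the lemma asks for. The phrase \enquote{kind of semi-acts} is deliberately informal: the paper never defines it, and for each of the three maps $\measureamact$, $\funcamact$, $\meanamact$ so described, the only thing it ever verifies is that the map is well defined into the stated codomain (plus, here, the norm bound). So the second half of your proposal proves more than is claimed, and the obstacle you flag there is real rather than mere bookkeeping: in the defining property of $\rightsemiaction$ the element $g_0 \in G_0$ is quantified \emph{after} $m$, so it depends on the point as well as on $g$. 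Summing over the fibre of $\blank \rightsemiaction g \cdot \mathfrak{g}'$ therefore mixes points with different associated elements $g_0$, and no single $g_0$ can be expected to make $f \funcamact (g \cdot \mathfrak{g}') = (f \funcamact g G_0) \funcamact (g_0 \cdot \mathfrak{g}')$ hold for all $f$---which is precisely why the paper weakens \enquote{semi-acts} to \enquote{kind of semi-acts} and proves only well-definedness. Drop that half and your proof coincides with the paper's.
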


  \begin{proof}
    Let $\mathfrak{g} \in G \quotient G_0$. Furthermore, let $f \in \boundeds(M)$. Moreover, let $m \in M$. Because $G_0$ is finite, according to Lemma~\ref{lem:liberation-preimage}, we have $\abs{(\blank \rightsemiaction \mathfrak{g})^{-1}(m)} \leq \abs{G_0} < \infty$. Hence, the sum in the definition of $\funcamact$ is finite. Furthermore,
    \begin{align*}
      \abs{(f \funcamact \mathfrak{g})(m)}
      &\leq \sum_{m' \in (\blank \rightsemiaction \mathfrak{g})^{-1}(m)} \abs{f(m')}\\
      &\leq \parens*{\sum_{m' \in (\blank \rightsemiaction \mathfrak{g})^{-1}(m)} 1} \cdot \norm{f}_\infty\\
      &=    \abs{(\blank \rightsemiaction \mathfrak{g})^{-1}(m)} \cdot \norm{f}_\infty\\
      &\leq \abs{G_0} \cdot \norm{f}_\infty.
    \end{align*} 
    Therefore, $f \funcamact \mathfrak{g} \in \boundeds(M)$, $\norm{f \funcamact \mathfrak{g}}_\infty \leq \abs{G_0} \cdot \norm{f}_\infty$, and $\funcamact$ is well-defined. \qed
  \end{proof}

  \begin{remark}
  \label{rem:groups:funcamact}
    In the situation of Remark~\ref{rem:groups:measureamact}, we have $\funcamact \from (f, g) \mapsto [m \mapsto f(m \cdot g^{-1})]$. Hence, $\funcamact$ is the right group action of $G$ on $\R^G$ as defined in \cite[Sect.~4.3, Paragraph~5]{ceccherini-silberstein:coornaert:2010}. 
  \end{remark}

  \begin{lemma}
  \label{lem:funcamact-linear-continuous-non-negativity-preserving}
    Let $G_0$ be finite and let $\mathfrak{g}$ be an element of $G \quotient G_0$. The map $\blank \funcamact \mathfrak{g}$ is linear, continuous, and non-negativity preserving.
  \end{lemma}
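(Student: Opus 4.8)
The plan is to verify the three properties separately, in each case falling back on the pointwise formula $(f \funcamact \mathfrak{g})(m) = \sum_{m' \in (\blank \rightsemiaction \mathfrak{g})^{-1}(m)} f(m')$ and using that, since $G_0$ is finite, Lemma~\ref{lem:liberation-preimage} guarantees that every fibre $(\blank \rightsemiaction \mathfrak{g})^{-1}(m)$ has at most $\abs{G_0}$ elements, so all the sums in sight are finite sums of real numbers.

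For linearity, fix $f$, $h \in \boundeds(M)$ and $\lambda$, $\mu \in \R$. For each $m \in M$ the sum defining $((\lambda f + \mu h) \funcamact \mathfrak{g})(m)$ ranges over the finite set $(\blank \rightsemiaction \mathfrak{g})^{-1}(m)$, so it may be split termwise and the scalars pulled out, yielding $\lambda (f \funcamact \mathfrak{g})(m) + \mu (h \funcamact \mathfrak{g})(m)$. As this holds for every $m$, the map $\blank \funcamact \mathfrak{g}$ is linear. For non-negativity preservation, let $f \geq \nullityfnc$; then $f(m') \geq 0$ for each $m'$, so for every $m \in M$ the value $(f \funcamact \mathfrak{g})(m)$ is a finite sum of non-negative reals and hence non-negative, i.e. $f \funcamact \mathfrak{g} \geq 0$.

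For continuity, I would simply invoke Lemma~\ref{lem:kind-of-semi-action-of-quotient-on-boundeds}, which gives $\norm{f \funcamact \mathfrak{g}}_\infty \leq \abs{G_0} \cdot \norm{f}_\infty$ for every $f \in \boundeds(M)$; thus the already-established linear map $\blank \funcamact \mathfrak{g}$ is bounded, hence continuous. None of the three steps is a genuine obstacle: the only point needing care is that every sum appearing really is finite, which is precisely what finiteness of $G_0$ buys us via Lemma~\ref{lem:liberation-preimage}, and which is the same fact underlying the well-definedness of $\funcamact$ recorded in Lemma~\ref{lem:kind-of-semi-action-of-quotient-on-boundeds}.
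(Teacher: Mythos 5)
Your proposal is correct and follows essentially the same route as the paper: linearity and non-negativity preservation from the corresponding properties of the (finite) sums, and continuity from linearity together with the bound $\norm{f \funcamact \mathfrak{g}}_\infty \leq \abs{G_0} \cdot \norm{f}_\infty$ of Lemma~\ref{lem:kind-of-semi-action-of-quotient-on-boundeds}. You merely spell out the details that the paper's one-line proof leaves implicit.
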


  \begin{proof}
    Linearity follows from linearity of summation, continuity follows from linearity and $\norm{\blank \funcamact \mathfrak{g}}_\infty \leq \abs{G_0} \cdot \norm{\blank}_\infty$, and non-negativity preservation follows from non-negativity preservation of summation. \qed 
%
  \end{proof}

  \begin{lemma}[{\cite[Proposition~4.1.7]{ceccherini-silberstein:coornaert:2010}}] 
  \label{lem:means-are-in-dual-of-boundeds-and-have-norm-one}
    Let $\nu$ be a mean on $M$. Then, $\nu \in \boundeds(M)^*$ and $\norm{\nu}_{\boundeds(M)^*} = 1$. In particular, $\nu$ is continuous. \qed
  \end{lemma}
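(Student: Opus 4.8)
The plan is to obtain the operator-norm bound $\norm{\nu}_{\boundeds(M)^*} \le 1$ from the order sandwich $-\norm{f}_\infty \unityfnc_M \le f \le \norm{f}_\infty \unityfnc_M$, and then to observe that this bound is attained at $\unityfnc_M$; this mirrors the group case in \cite{ceccherini-silberstein:coornaert:2010}.

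First I would establish that $\nu$ is \emph{monotone}: given $f, g \in \boundeds(M)$ with $f \le g$, the difference satisfies $g - f \ge \nullityfnc$, so non-negativity preservation yields $\nu(g - f) \ge 0$, and linearity turns this into $\nu(g) \ge \nu(f)$.

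Next, for an arbitrary $f \in \boundeds(M)$ I would use that $\norm{f}_\infty < \infty$ to write the pointwise inequalities $-\norm{f}_\infty \unityfnc_M \le f \le \norm{f}_\infty \unityfnc_M$. Applying the monotone linear map $\nu$ and invoking normalisation $\nu(\unityfnc_M) = 1$ gives $-\norm{f}_\infty \le \nu(f) \le \norm{f}_\infty$, that is, $\abs{\nu(f)} \le \norm{f}_\infty$. Hence $\nu$ is a bounded linear functional; in particular $\nu \in \boundeds(M)^*$, $\nu$ is continuous, and $\norm{\nu}_{\boundeds(M)^*} \le 1$.

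Finally, because $\unityfnc_M \in \boundeds(M)$ with $\norm{\unityfnc_M}_\infty = 1$ and $\abs{\nu(\unityfnc_M)} = 1$, the supremum defining the operator norm is at least $1$, so $\norm{\nu}_{\boundeds(M)^*} = 1$. I expect no genuine obstacle here; the only points deserving care are that $\unityfnc_M$ really lies in $\boundeds(M)$ so that normalisation is meaningful, and that the sandwich argument does not require $M$ to be finite — only $f$ to be bounded.
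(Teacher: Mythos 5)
Your argument is correct and is exactly the standard proof of this fact; the paper itself gives no proof but defers to \cite[Proposition~4.1.7]{ceccherini-silberstein:coornaert:2010}, whose argument is the same monotonicity-plus-sandwich you use, with the norm attained at $\unityfnc_M$. Nothing to add.
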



  \begin{definition}
    The group $G$ acts on $\boundeds(M)^*$ on the left by
    \begin{align*}
      \meanamactleft \from G \times \boundeds(M)^* &\to     \boundeds(M)^*, \mathnote{$\meanamactleft$}\\
                                         (g, \psi) &\mapsto [f \mapsto \psi(g^{-1} \funcamactleft f)],
    \end{align*}
    such that $G \meanamactleft \means(M) \subseteq \means(M)$.
  \end{definition}


  \begin{definition} 
  \label{def:kind-of-semi-action-of-quotient-on-boundeds-star}
    Let $G_0$ be finite. The quotient set $G \quotient G_0$ kind of semi-acts on $\boundeds(M)^*$ on the right by
    \begin{align*}
      \meanamact \from \boundeds(M)^* \times G \quotient G_0 &\to     \boundeds(M)^*, \mathnote{$\meanamact$}\\
                                        (\psi, \mathfrak{g}) &\mapsto [f \mapsto \psi(f \funcamact \mathfrak{g})].
    \end{align*}
  \end{definition}

  \begin{proof}
    Let $\psi \in \boundeds(M)^*$ and let $\mathfrak{g} \in G \quotient G_0$. Then, $\psi \meanamact \mathfrak{g} = \psi \after (\blank \funcamact \mathfrak{g})$. Because $\psi$ and $\blank \funcamact \mathfrak{g}$ are linear and continuous, so is $\psi \meanamact \mathfrak{g}$. \qed
  \end{proof}

  \begin{definition}
    Let $G_0$ be finite and let $\psi$ be an element of $\boundeds(M)^*$. It is called \define{$\meanamact$-invariant}\graffito{$\meanamact$-invariant} if and only if, for each element $\mathfrak{g} \in G \quotient G_0$ and each function $f \in \boundeds(M)$, we have $(\psi \meanamact \mathfrak{g})(f) = \psi(f)$.
  \end{definition}



  \begin{remark}
  \label{rem:groups:meanamact} 
    In the situation of Remark~\ref{rem:groups:funcamact}, we have $\meanamact \from (\psi, g) \mapsto [f \mapsto \psi(f \funcamact g]$. Except for $g$ not being inverted, this is the right group action of $G$ on $\boundeds(G)^*$ as defined in \cite[Sect.~4.3, Paragraph~6]{ceccherini-silberstein:coornaert:2010}. Hence, being $\meanamact$-invariant is the same as being right-invariant as defined in \cite[Sect.~4.4, Paragraph~3]{ceccherini-silberstein:coornaert:2010}. 
  \end{remark}

  %

  \begin{theorem}[{\cite[Theorem~4.1.8]{ceccherini-silberstein:coornaert:2010}}] 
  \label{thm:means-vs-measures}
    The map
    \begin{align*}
      \Phi \from \means(M) &\to     \pmeasures(M),\\
                       \nu &\mapsto [A \mapsto \nu(\unityfnc_A)],
    \end{align*}
    is bijective. \qed
  \end{theorem}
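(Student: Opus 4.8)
The plan is to verify in three steps that $\Phi$ is well-defined, injective, and surjective, the two workhorses being the density of the simple functions in $\boundeds(M)$ for $\norm{\blank}_\infty$ and the continuity of means supplied by Lemma~\ref{lem:means-are-in-dual-of-boundeds-and-have-norm-one}. Throughout I would write $\simples(M)$ for the linear span of $\set{\unityfnc_A \suchthat A \subseteq M}$ inside $\boundeds(M)$; its elements are exactly the bounded functions with finite image, and every $f \in \boundeds(M)$ is a uniform limit of such functions, obtained by cutting the (bounded) range of $f$ into finitely many intervals of small length and replacing $f$ by a constant on each of the corresponding preimages.

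First I would check that $\Phi$ actually lands in $\pmeasures(M)$. Given a mean $\nu$, the set function $\mu = \Phi(\nu)$ takes values in $[0,1]$ because $\nullityfnc \leq \unityfnc_A \leq \unityfnc_M$ and $\nu$ is linear, normalised, and non-negativity preserving; it is normalised since $\mu(M) = \nu(\unityfnc_M) = 1$; and it is finitely additive since $\unityfnc_{A \cup B} = \unityfnc_A + \unityfnc_B$ for disjoint $A$ and $B$, so linearity of $\nu$ yields $\mu(A \cup B) = \mu(A) + \mu(B)$.

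For injectivity I would note that two means $\nu_1$ and $\nu_2$ with $\Phi(\nu_1) = \Phi(\nu_2)$ agree on every $\unityfnc_A$, hence by linearity on all of $\simples(M)$, and hence — being continuous by Lemma~\ref{lem:means-are-in-dual-of-boundeds-and-have-norm-one} — on the closure of $\simples(M)$, which is $\boundeds(M)$. For surjectivity I would, given $\mu \in \pmeasures(M)$, first define a functional $\nu_0$ on $\simples(M)$ by $\nu_0\bigl(\sum_i c_i \unityfnc_{A_i}\bigr) = \sum_i c_i \mu(A_i)$ whenever $\family{A_i}_i$ is a finite partition of $M$ (e.g.\ the family of level sets of the function); a common-refinement argument shows that this does not depend on the chosen partition and that $\nu_0$ is linear, and finite additivity of $\mu$ gives $\sum_i \mu(A_i) = \mu(M) = 1$ and hence $\abs{\nu_0(f)} \leq \norm{f}_\infty$. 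Being bounded and linear on the dense subspace $\simples(M)$, $\nu_0$ extends uniquely to a continuous linear functional $\nu$ on $\boundeds(M)$; this $\nu$ is normalised since $\nu(\unityfnc_M) = \mu(M) = 1$, and non-negativity preserving since a non-negative bounded function is a uniform limit of non-negative simple functions, on which $\nu$ coincides with $\nu_0 \geq 0$, so continuity passes the inequality to the limit. Thus $\nu \in \means(M)$, and $\Phi(\nu) = \mu$ holds by construction, since $\nu(\unityfnc_A) = \nu_0(\unityfnc_A) = \mu(A)$.

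The one point that needs genuine care is the well-definedness of $\nu_0$ on $\simples(M)$, i.e.\ that $\sum_i c_i \mu(A_i)$ is unchanged when the same simple function is rewritten over a different finite partition; I expect to handle this by passing to the common refinement of the two partitions and applying finite additivity of $\mu$ repeatedly. The density of $\simples(M)$ in $\boundeds(M)$ and the bounded-linear-transformation extension of $\nu_0$ are routine, so beyond that bit of bookkeeping I do not anticipate any real obstacle.
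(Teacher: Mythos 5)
Your proof is correct. The paper itself gives no argument for this theorem --- it is quoted from Ceccherini-Silberstein and Coornaert with only a citation --- and your three-step argument (well-definedness via linearity and non-negativity preservation, injectivity and surjectivity via the density of $\simples(M)$ in $\boundeds(M)$ together with the continuity of means from Lemma~\ref{lem:means-are-in-dual-of-boundeds-and-have-norm-one}, and the common-refinement construction of the inverse) is essentially the standard proof given in that reference.
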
 


  \begin{theorem}[{\cite[Theorem~4.2.1]{ceccherini-silberstein:coornaert:2010}}] 
  \label{thm:means-convex-and-compact-wrt-weak-star-topology}
    The set $\means(M)$ is a convex and compact subset of $\boundeds(M)^*$ equipped with the weak-$*$ topology. \qed
  \end{theorem}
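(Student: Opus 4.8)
The plan is to handle convexity by a direct verification and compactness via the Banach--Alaoglu theorem combined with a weak-$*$ closedness argument. For convexity, I would take means $\nu_0, \nu_1 \in \means(M)$ and a parameter $t \in [0,1]$ and check that $\nu \coloneqq t \nu_0 + (1 - t) \nu_1$ is again a mean: it is linear as a linear combination of linear maps; it satisfies $\nu(\unityfnc_M) = t \cdot 1 + (1 - t) \cdot 1 = 1$, so it is normalised; and if $f \geq \nullityfnc$ then $\nu_0(f) \geq 0$ and $\nu_1(f) \geq 0$, whence $\nu(f) \geq 0$, so it is non-negativity preserving. Thus $\nu \in \means(M)$ and $\means(M)$ is convex.

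For compactness, the key observation is that, by Lemma~\ref{lem:means-are-in-dual-of-boundeds-and-have-norm-one}, every mean lies in the closed unit ball of $\boundeds(M)^*$, which is compact in the weak-$*$ topology by the Banach--Alaoglu theorem. It therefore suffices to show that $\means(M)$ is weak-$*$ closed. Since $\boundeds(M)^*$ by definition consists of continuous linear functionals, an element $\psi \in \boundeds(M)^*$ is a mean if and only if $\psi(\unityfnc_M) = 1$ and $\psi(f) \geq 0$ for every $f \in \boundeds(M)$ with $f \geq \nullityfnc$. For each fixed $f \in \boundeds(M)$ the evaluation map $\boundeds(M)^* \to \R$, $\psi \mapsto \psi(f)$, is weak-$*$ continuous by the very definition of the weak-$*$ topology; hence the set $\set{\psi \in \boundeds(M)^* \suchthat \psi(\unityfnc_M) = 1}$ is weak-$*$ closed, and so is $\set{\psi \in \boundeds(M)^* \suchthat \psi(f) \geq 0}$ for each $f \geq \nullityfnc$. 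Consequently $\means(M)$, being the intersection of all these weak-$*$ closed sets, is weak-$*$ closed, and as a weak-$*$ closed subset of a weak-$*$ compact set it is weak-$*$ compact.

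I do not anticipate a genuine obstacle here: the only nontrivial external ingredient is the Banach--Alaoglu theorem, and the remainder is bookkeeping. The one point that warrants care is making sure that each defining property of a mean is phrased as a weak-$*$ closed condition --- in particular that "non-negativity preserving" unfolds into an intersection, over all non-negative $f \in \boundeds(M)$, of half-spaces $\set{\psi \suchthat \psi(f) \geq 0}$, each of which is weak-$*$ closed.
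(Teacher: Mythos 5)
Your proof is correct and follows exactly the argument of the cited source (the paper itself only quotes \cite[Theorem~4.2.1]{ceccherini-silberstein:coornaert:2010} without reproving it): convexity by direct verification of the three defining properties, and compactness by combining Lemma~\ref{lem:means-are-in-dual-of-boundeds-and-have-norm-one} with the Banach--Alaoglu theorem (Theorem~\ref{thm:banach-alaoglu}) and the observation that $\means(M)$ is an intersection of weak-$*$ closed sets cut out by the evaluation maps. No gaps.
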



  \section{Right Amenability}
  \label{sec:right-amenability}

  In Definition~\ref{def:right-amenable} we introduce the notion of right amenability using finitely additive probability measures. And in Theorem~\ref{thm:mean-characterisation-of-right-amenable} we characterise right amenability of cell spaces with finite stabilisers using means.

  \begin{definition} 
    Let $\ntuple{M, G, \leftaction}$ be a left group set. It is called \define{left amenable}\graffito{left amenable}\index{amenable!left} if and only if there is a $\measureamactleft$-invariant finitely additive probability measure on $M$.
  \end{definition}

  \begin{definition}
  \label{def:right-amenable}
    Let $\mathcal{M} = \ntuple{M, G, \leftaction}$ be a left homogeneous space. It is called \define{right amenable}\graffito{right amenable}\index{amenable!right} if and only if there is a coordinate system $\mathcal{K} = \ntuple{m_0, \family{g_{m_0, m}}_{m \in M}}$ for $\mathcal{M}$ such that there is a $\measureamact$-semi-invariant finitely additive probability measure on $M$, in which case the cell space $\mathcal{R} = \ntuple{\mathcal{M}, \mathcal{K}}$ is called \define{right amenable}\graffito{right amenable}\index{amenable!right}. 
  \end{definition}

  \begin{remark} 
    In the situation of Remark~\ref{rem:groups:measureamact}, being right amenable is the same as being amenable as defined in \cite[Definition~4.4.5]{ceccherini-silberstein:coornaert:2010}. 
  \end{remark}

  In the remainder of this section, let $\mathcal{R} = \ntuple{\ntuple{M, G, \leftaction}, \ntuple{m_0, \family{g_{m_0, m}}_{m \in M}}}$ be a cell space such that the stabiliser $G_0$ of $m_0$ under $\leftaction$ is finite. 


  \begin{lemma}
  \label{lem:indicator-function-of-A-liberation-n-is-indicator-function-of-A-acted-upon}
    Let $\mathfrak{g}$ be an element of $G \quotient G_0$ and let $A$ be a subset of $M$ such that the map $\blank \rightsemiaction \mathfrak{g}$ is injective on $A$. Then, $\unityfnc_{A \rightsemiaction \mathfrak{g}} = \unityfnc_A \funcamact \mathfrak{g}$.
  \end{lemma}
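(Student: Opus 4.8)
The claim is a pointwise identity between two functions on $M$, so the plan is to fix an arbitrary $m \in M$ and show that $\unityfnc_{A \rightsemiaction \mathfrak{g}}(m)$ and $(\unityfnc_A \funcamact \mathfrak{g})(m)$ agree. By the definition of $\funcamact$ in Lemma~\ref{lem:kind-of-semi-action-of-quotient-on-boundeds}, the right-hand side is $\sum_{m' \in (\blank \rightsemiaction \mathfrak{g})^{-1}(m)} \unityfnc_A(m')$, which counts exactly the number of preimages of $m$ under $\blank \rightsemiaction \mathfrak{g}$ that lie in $A$; that is, it equals $\abs{(\blank \rightsemiaction \mathfrak{g})^{-1}(m) \cap A}$.

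So the task reduces to proving $\unityfnc_{A \rightsemiaction \mathfrak{g}}(m) = \abs{(\blank \rightsemiaction \mathfrak{g})^{-1}(m) \cap A}$. I would split into the two cases according to whether $m \in A \rightsemiaction \mathfrak{g}$. If $m \notin A \rightsemiaction \mathfrak{g}$, then no element of $A$ maps to $m$, so the intersection is empty and both sides are $0$. If $m \in A \rightsemiaction \mathfrak{g}$, then there is at least one $a \in A$ with $a \rightsemiaction \mathfrak{g} = m$, so the left-hand side is $1$; and here is where injectivity of $\blank \rightsemiaction \mathfrak{g}$ on $A$ enters — it guarantees this $a$ is unique within $A$, so $(\blank \rightsemiaction \mathfrak{g})^{-1}(m) \cap A = \set{a}$ has cardinality exactly $1$, matching the left-hand side. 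Since $m$ was arbitrary, the two functions coincide.

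The argument is essentially bookkeeping; the only genuine ingredient is the injectivity hypothesis, without which the right-hand side could exceed $1$ and the identity would fail (the sum would instead compute a multiplicity). I do not anticipate a real obstacle — one should just be careful to invoke Lemma~\ref{lem:kind-of-semi-action-of-quotient-on-boundeds} to know the sum is finite and well-defined (which uses finiteness of $G_0$, in force throughout this part of the section) before manipulating it.
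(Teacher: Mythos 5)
Your proof is correct and follows essentially the same route as the paper: a pointwise comparison in which the right-hand side is identified with $\abs{\set{m' \in A \suchthat m' \rightsemiaction \mathfrak{g} = m}}$ and injectivity of $\blank \rightsemiaction \mathfrak{g}$ on $A$ forces that count to be $0$ or $1$, matching the indicator of $A \rightsemiaction \mathfrak{g}$. The paper compresses your two cases into a single chain of equalities, but the substance is identical.
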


  \begin{proof}
    For each $m \in M$, because $\blank \rightsemiaction \mathfrak{g}$ is injective on $A$,
    \begin{align*}
      \unityfnc_{A \rightsemiaction \mathfrak{g}}(m)
      &= \begin{dcases*}
           1, &if $m \in A \rightsemiaction \mathfrak{g}$,\\
           0, &otherwise,
         \end{dcases*}\\
      &= \abs{\set{m' \in A \suchthat m' \rightsemiaction \mathfrak{g} = m}}\\ 
      &= \sum_{m' \in (\blank \rightsemiaction \mathfrak{g})^{-1}(m)} \unityfnc_A(m')\\
      &= (\unityfnc_A \funcamact \mathfrak{g})(m).
    \end{align*}
    In conclusion, $\unityfnc_{A \rightsemiaction \mathfrak{g}} = \unityfnc_A \funcamact \mathfrak{g}$. \qed
  \end{proof}



  \begin{lemma}[{\cite[Lemma~4.1.9]{ceccherini-silberstein:coornaert:2010}}] 
  \label{lem:simples-dense-in-boundeds}
    The vector space
    \begin{equation*}
      \simples(M) = \set{f \from M \to \R \suchthat f(M) \text{ is finite}}\ (= \vecspan\set{\unityfnc_A \suchthat A \subseteq M}) \mathnote{$\simples(M)$}
    \end{equation*}
    is dense in the Banach space $\ntuple{\boundeds(M), \norm{\blank}_\infty}$. \qed
  \end{lemma}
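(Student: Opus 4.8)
The plan is the standard approximation argument showing that simple functions are dense in the bounded functions, carried out directly from the definition of the supremum norm.

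First I would establish the parenthetical identity $\simples(M) = \vecspan\set{\unityfnc_A \suchthat A \subseteq M}$, which in particular gives $\simples(M) \subseteq \boundeds(M)$. If $f \from M \to \R$ has finite image $f(M) = \set{c_1, \dotsc, c_k}$ with pairwise distinct $c_i$, then the level sets $f^{-1}(c_1), \dotsc, f^{-1}(c_k)$ partition $M$ and $f = \sum_{i = 1}^{k} c_i \unityfnc_{f^{-1}(c_i)}$; hence $f$ lies in $\vecspan\set{\unityfnc_A \suchthat A \subseteq M}$ and is bounded. Conversely, a finite sum $\sum_{i = 1}^{k} \lambda_i \unityfnc_{A_i}$ takes only finitely many values --- at most $2^k$, since its value at $m$ depends only on $\set{i \suchthat m \in A_i} \subseteq \set{1, \dotsc, k}$ --- and hence lies in $\simples(M)$; and $\simples(M)$ is a linear subspace because $(f + g)(M) \subseteq \set{a + b \suchthat a \in f(M),\ b \in g(M)}$ and $(\lambda f)(M) = \set{\lambda c \suchthat c \in f(M)}$ are finite whenever $f(M)$ and $g(M)$ are. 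So the two descriptions of $\simples(M)$ agree.

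For density, let $f \in \boundeds(M)$ and $\epsilon > 0$. Choose $R \geq 0$ with $f(M) \subseteq [-R, R]$ and a positive integer $n$ with $2 R / n \leq \epsilon$; put $t_j = -R + j \cdot 2 R / n$ for $0 \leq j \leq n$, set $M_j = f^{-1}([t_{j - 1}, t_j))$ for $1 \leq j \leq n - 1$, and set $M_n = f^{-1}([t_{n - 1}, t_n])$. Then $M_1, \dotsc, M_n$ partition $M$, so $g = \sum_{j = 1}^{n} t_{j - 1} \unityfnc_{M_j}$ lies in $\simples(M)$. For each $m \in M$ there is a unique $j$ with $m \in M_j$, and then $f(m)$ and $g(m) = t_{j - 1}$ both lie in an interval of length $2 R / n \leq \epsilon$, so $\abs{f(m) - g(m)} \leq \epsilon$. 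Hence $\norm{f - g}_\infty \leq \epsilon$, and since $\epsilon > 0$ was arbitrary, $\simples(M)$ is dense in $\boundeds(M)$.

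I do not expect a genuine obstacle here; the proof is routine. The only points worth a moment's care are that the partitions involved --- into level sets, respectively into the preimages of the subintervals $[t_{j - 1}, t_j)$ --- are finite, that $\simples(M)$ is genuinely closed under the vector space operations, and the trivial case $M = \emptyset$, where both spaces reduce to $\set{0}$ and the claim is immediate.
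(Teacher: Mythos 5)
Your proof is correct and is essentially the argument the paper relies on: the lemma is quoted from Ceccherini-Silberstein and Coornaert (Lemma~4.1.9) without proof, and the standard proof there is exactly your range-discretisation argument --- partition $[-R,R]$ into subintervals of length at most $\varepsilon$ and replace $f$ by the step function on the preimages, together with the routine verification that $\simples(M) = \vecspan\set{\unityfnc_A \suchthat A \subseteq M}$. Nothing to add.
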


  \begin{lemma} 
  \label{lem:mean-invariant-if-and-only-if-semi-invariant}
    Let $\psi$ be an element of $\boundeds(M)^*$ such that, for each element $\mathfrak{g} \in G \quotient G_0$ and each subset $A$ of $M$ such that the map $\blank \rightsemiaction \mathfrak{g}$ is injective on $A$, we have $(\psi \meanamact \mathfrak{g})(\unityfnc_A) = \psi(\unityfnc_A)$. The map $\psi$ is $\meanamact$-invariant.
  \end{lemma}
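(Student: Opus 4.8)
The plan is to reduce the assertion $(\psi \meanamact \mathfrak{g})(f) = \psi(f)$ --- which has to hold for \emph{every} $f \in \boundeds(M)$ --- to the special case granted by the hypothesis. Fix $\mathfrak{g} \in G \quotient G_0$. By Definition~\ref{def:kind-of-semi-action-of-quotient-on-boundeds-star}, $\psi \meanamact \mathfrak{g} = \psi \after (\blank \funcamact \mathfrak{g})$, and by Lemma~\ref{lem:funcamact-linear-continuous-non-negativity-preserving} the map $\blank \funcamact \mathfrak{g}$ is linear and continuous, so $\psi$ and $\psi \meanamact \mathfrak{g}$ are both continuous linear functionals on $\boundeds(M)$. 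Since $\simples(M) = \vecspan\set{\unityfnc_A \suchthat A \subseteq M}$ is dense in $\boundeds(M)$ by Lemma~\ref{lem:simples-dense-in-boundeds}, it will suffice to prove $(\psi \meanamact \mathfrak{g})(\unityfnc_A) = \psi(\unityfnc_A)$ for \emph{every} subset $A$ of $M$, not just those on which $\blank \rightsemiaction \mathfrak{g}$ is injective.

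For that, I would split an arbitrary $A$ into finitely many pieces on each of which $\blank \rightsemiaction \mathfrak{g}$ is injective. Applying Lemma~\ref{lem:liberation-preimage} to singletons gives $\abs{(\blank \rightsemiaction \mathfrak{g})^{-1}(m)} \leq \abs{G_0}$ for every $m \in M$; as $G_0$ is finite, set $n = \abs{G_0}$. For each $m$ in the image of $\blank \rightsemiaction \mathfrak{g}$ pick an injection of the fibre $(\blank \rightsemiaction \mathfrak{g})^{-1}(m)$ into $\set{1, \dotsc, n}$, and let $C_i$ be the set of all points carrying label $i$. Then $M = C_1 \cupdot \dotsb \cupdot C_n$, and $\blank \rightsemiaction \mathfrak{g}$ is injective on each $C_i$ since $C_i$ meets every fibre in at most one point.

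Given an arbitrary $A \subseteq M$, the sets $A_i = A \cap C_i$ then partition $A$, each carries an injective restriction of $\blank \rightsemiaction \mathfrak{g}$, so the hypothesis yields $(\psi \meanamact \mathfrak{g})(\unityfnc_{A_i}) = \psi(\unityfnc_{A_i})$ for every $i$; summing over $i$, using $\unityfnc_A = \sum_{i = 1}^{n} \unityfnc_{A_i}$ and linearity of $\psi$ and of $\psi \meanamact \mathfrak{g}$, gives $(\psi \meanamact \mathfrak{g})(\unityfnc_A) = \psi(\unityfnc_A)$. Combined with the density-and-continuity reduction of the first paragraph, and since $\mathfrak{g} \in G \quotient G_0$ was arbitrary, this shows $\psi$ is $\meanamact$-invariant. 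I expect the colouring step to be the only real obstacle; its crux is that the fibre-size bound $\abs{G_0}$ is \emph{uniform} in $m$, which is exactly what lets one finite partition $C_1, \dotsc, C_n$ serve for all subsets $A$ at once.
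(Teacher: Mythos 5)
Your argument is correct and is essentially the paper's own proof: the labelling of each fibre of $\blank \rightsemiaction \mathfrak{g}$ by $\set{1, \dotsc, \abs{G_0}}$ and the resulting decomposition $A = A_1 \cupdot \dotsb \cupdot A_{\abs{G_0}}$ into pieces on which $\blank \rightsemiaction \mathfrak{g}$ is injective is exactly the paper's construction of the sets $A_i = \set{m_{m,i} \suchthat m \in M, k_m \geq i} \cap A$, and the subsequent passage from indicator functions to $\simples(M)$ by linearity and then to $\boundeds(M)$ by density and continuity is identical. Nothing to add.
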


  \begin{proof}
    Let $\mathfrak{g} \in G \quotient G_0$.

    First, let $A \subseteq M$. Moreover, let $m \in M$. According to Lemma~\ref{lem:liberation-preimage}, we have $k_m = \abs{(\blank \rightsemiaction \mathfrak{g})^{-1}(m)} \leq \abs{G_0}$. Hence, there are pairwise distinct $m_{m,1}$, $m_{m,2}$, $\dotsc$, $m_{m,k_m} \in M$ such that $(\blank \rightsemiaction \mathfrak{g})^{-1}(m) = \set{m_{m,1}, m_{m,2}, \dotsc, m_{m,k_m}}$. For each $i \in \set{1, 2, \dotsc, \abs{G_0}}$, put
    \begin{equation*} 
      A_i = \set{m_{m,i} \suchthat m \in M, k_m \geq i} \cap A.
    \end{equation*} 
    Because, for each $m \in M$ and each $m' \in M$ such that $m \neq m'$, we have $(\blank \rightsemiaction \mathfrak{g})^{-1}(m) \cap (\blank \rightsemiaction \mathfrak{g})^{-1}(m') = \emptyset$, the sets $A_1$, $A_2$, $\dotsc$, $A_{\abs{G_0}}$ are pairwise disjoint and the map $\blank \rightsemiaction \mathfrak{g}$ is injective on each of these sets. Moreover, because $\bigcup_{m \in M} (\blank \rightsemiaction \mathfrak{g})^{-1}(m) = M$, we have $\bigcup_{i = 1}^{\abs{G_0}} A_i = A$. Therefore, $\unityfnc_A = \sum_{i = 1}^{\abs{G_0}} \unityfnc_{A_i}$. Thus, because $\psi \meanamact \mathfrak{g}$ and $\psi$ are linear,
    \begin{equation*}
      (\psi \meanamact \mathfrak{g})(\unityfnc_A)
      = (\psi \meanamact \mathfrak{g})\parens*{\sum_{i = 1}^{\abs{G_0}} \unityfnc_{A_i}}
      = \sum_{i = 1}^{\abs{G_0}} (\psi \meanamact \mathfrak{g})(\unityfnc_{A_i})
      = \sum_{i = 1}^{\abs{G_0}} \psi(\unityfnc_{A_i})
      = \psi(\unityfnc_A).
    \end{equation*}
    Therefore, $\psi \meanamact \mathfrak{g} = \psi$ on the set of indicator functions.
%
    Thus, because the indicator functions span $\simples(M)$, and $\psi \meanamact \mathfrak{g}$ and $\psi$ are linear, $\psi \meanamact \mathfrak{g} = \psi$ on $\simples(M)$. Hence, because $\simples(M)$ is dense in $\boundeds(M)$, and $\psi \meanamact \mathfrak{g}$ and $\psi$ are continuous, $\psi \meanamact \mathfrak{g} = \psi$ on $\boundeds(M)$.
%
%
    In conclusion, $\psi$ is $\meanamact$-invariant. \qed
  \end{proof}

  \begin{theorem}
  \label{thm:mean-characterisation-of-right-amenable}
    The cell space $\mathcal{R}$ is right amenable if and only if there is a $\meanamact$-invariant mean on $M$.
  \end{theorem}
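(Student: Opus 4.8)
The plan is to transport the asserted equivalence across the bijection $\Phi \from \means(M) \to \pmeasures(M)$ of Theorem~\ref{thm:means-vs-measures}. Concretely, I would show that a mean $\nu$ on $M$ is $\meanamact$-invariant if and only if the finitely additive probability measure $\Phi(\nu)$ is $\measureamact$-semi-invariant. Granting this, the theorem is immediate: if $\mathcal{R}$ is right amenable there is a $\measureamact$-semi-invariant measure $\mu \in \pmeasures(M)$, and by surjectivity of $\Phi$ we have $\mu = \Phi(\nu)$ for some $\nu \in \means(M)$, which is then $\meanamact$-invariant; conversely, a $\meanamact$-invariant mean $\nu$ produces the $\measureamact$-semi-invariant measure $\Phi(\nu)$, witnessing right amenability.

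To establish the key equivalence, I would fix $\nu \in \means(M)$ and set $\mu = \Phi(\nu)$, so that $\mu(A) = \nu(\unityfnc_A)$ for every $A \subseteq M$. By Lemma~\ref{lem:means-are-in-dual-of-boundeds-and-have-norm-one} we have $\nu \in \boundeds(M)^*$, so $\nu \meanamact \mathfrak{g}$ is defined for each $\mathfrak{g} \in G \quotient G_0$. Now take $\mathfrak{g} \in G \quotient G_0$ and a subset $A$ of $M$ on which $\blank \rightsemiaction \mathfrak{g}$ is injective. Unwinding the definitions of $\measureamact$ and of $\Phi$, then applying Lemma~\ref{lem:indicator-function-of-A-liberation-n-is-indicator-function-of-A-acted-upon}, and finally the definition of $\meanamact$, gives
\begin{equation*}
  (\mu \measureamact \mathfrak{g})(A) = \mu(A \rightsemiaction \mathfrak{g}) = \nu(\unityfnc_{A \rightsemiaction \mathfrak{g}}) = \nu(\unityfnc_A \funcamact \mathfrak{g}) = (\nu \meanamact \mathfrak{g})(\unityfnc_A),
\end{equation*}
whereas $\mu(A) = \nu(\unityfnc_A)$. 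Hence $\mu$ is $\measureamact$-semi-invariant precisely when $(\nu \meanamact \mathfrak{g})(\unityfnc_A) = \nu(\unityfnc_A)$ holds for all such $\mathfrak{g}$ and $A$.

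Finally I would link this last condition with $\meanamact$-invariance of $\nu$. One direction is immediate, since $\meanamact$-invariance of $\nu$ in particular yields $(\nu \meanamact \mathfrak{g})(\unityfnc_A) = \nu(\unityfnc_A)$. For the other direction — that agreement on the indicator functions of sets on which $\blank \rightsemiaction \mathfrak{g}$ acts injectively already forces $\nu \meanamact \mathfrak{g} = \nu$ on all of $\boundeds(M)$ — I would simply invoke Lemma~\ref{lem:mean-invariant-if-and-only-if-semi-invariant} with $\psi = \nu$. This is the only step that is not a purely formal manipulation; its substance (splitting an arbitrary subset into at most $\abs{G_0}$ pieces on which the semi-action is injective, then passing to $\simples(M)$ by linearity and to $\boundeds(M)$ by density and continuity) has already been carried out there. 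I do not anticipate a genuine obstacle; the one point requiring care is keeping straight that $\measureamact$-semi-invariance and $\meanamact$-invariance are, a priori, quantified over different collections — subsets on which the semi-action is injective versus all bounded functions — which is exactly what the cited lemma reconciles.
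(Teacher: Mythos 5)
Your proposal is correct and follows essentially the same route as the paper: both directions rest on the bijection $\Phi$ of Theorem~\ref{thm:means-vs-measures}, the identity $\unityfnc_{A \rightsemiaction \mathfrak{g}} = \unityfnc_A \funcamact \mathfrak{g}$ from Lemma~\ref{lem:indicator-function-of-A-liberation-n-is-indicator-function-of-A-acted-upon}, and Lemma~\ref{lem:mean-invariant-if-and-only-if-semi-invariant} to upgrade agreement on indicator functions of injectivity sets to full $\meanamact$-invariance. Your packaging of the argument as a single equivalence transported along $\Phi$ is only a cosmetic reorganisation of the paper's two-direction proof.
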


  \begin{proof} 
    Let $\Phi$ be the map in Theorem~\ref{thm:means-vs-measures}.

    First, let $\mathcal{R}$ be right amenable. Then, there is $\measureamact$-semi-invariant finitely additive probability measure $\mu$ on $M$. Put $\nu = \Phi^{-1}(\mu)$. Then, for each $\mathfrak{g} \in G \quotient G_0$ and each $A \subseteq M$ such that $\blank \rightsemiaction \mathfrak{g}$ is injective on $A$, according to Lemma~\ref{lem:indicator-function-of-A-liberation-n-is-indicator-function-of-A-acted-upon},
    \begin{equation*}
      (\nu \meanamact \mathfrak{g})(\unityfnc_A)
      = \nu(\unityfnc_A \funcamact \mathfrak{g})
      = \nu(\unityfnc_{A \rightsemiaction \mathfrak{g}})
      = \mu(A \rightsemiaction \mathfrak{g}) 
      = \mu(A)\\
      = \nu(\unityfnc_A).
    \end{equation*}
    Thus, according to Lemma~\ref{lem:mean-invariant-if-and-only-if-semi-invariant}, the mean $\nu$ is $\meanamact$-invariant.

    Secondly, let there be a $\meanamact$-invariant mean $\nu$ on $M$. Put $\mu = \Phi(\nu)$.
    Then, for each $\mathfrak{g} \in G \quotient G_0$ and each $A \subseteq M$ such that $\blank \rightsemiaction \mathfrak{g}$ is injective on $A$, according to Lemma~\ref{lem:indicator-function-of-A-liberation-n-is-indicator-function-of-A-acted-upon},
    \begin{align*}
      (\mu \measureamact \mathfrak{g})(A)
      = \mu(A \rightsemiaction \mathfrak{g})
      = \nu(\unityfnc_{A \rightsemiaction \mathfrak{g}})
      = \nu(\unityfnc_A \funcamact \mathfrak{g})
      = \nu(\unityfnc_A)
      = \mu(A).
    \end{align*}
    Hence, $\mu$ is $\measureamact$-semi-invariant. \qed
  \end{proof}

  \section{Right Følner Nets}
  \label{sec:right-folner-nets}

  In this section, let $\mathcal{R} = \ntuple{\ntuple{M, G, \leftaction}, \ntuple{m_0, \family{g_{m_0, m}}_{m \in M}}}$ be a cell space.

  \begin{definition} 
  \label{def:right-folner-net}
    Let $\net{F_i}_{i \in I}$ be a net in $\set{F \subseteq M \suchthat F \neq \emptyset, F \text{ finite}}$ indexed by $(I, \leq)$. It is called \define{right Følner net in $\mathcal{R}$ indexed by $(I, \leq)$}\graffito{right Følner net $\net{F_i}_{i \in I}$ in $\mathcal{R}$ indexed by $(I, \leq)$}\index{Følner net in $\mathcal{R}$ indexed by $(I, \leq)$!right} if and only if 
    \begin{equation*}
      \ForEach \mathfrak{g} \in G \quotient G_0 \Holds \lim_{i \in I} \frac{\abs{F_i \smallsetminus (\blank \rightsemiaction \mathfrak{g})^{-1}(F_i)}}{\abs{F_i}} = 0.
    \end{equation*} 
  \end{definition}

  \begin{remark} 
    In the situation of Remark~\ref{rem:groups:measureamact}, for each element $g \in G$ and each index $i \in I$, we have $(\blank \rightsemiaction g)^{-1}(F_i) = F_i \cdot g^{-1}$. Hence, right Følner nets in $\mathcal{R}$ are exactly right Følner nets for $G$ as defined in \cite[First paragraph after Definition~4.7.2]{ceccherini-silberstein:coornaert:2010}. 
  \end{remark}


  \begin{lemma} 
  \label{lem:characterisation-of-zero-convergent-net-that-depends-on-a-parameter}
    Let $V$ be a set, let $W$ be a set, and let $\Psi$ be a map from $V \times W$ to $\R$. There is a net $\net{v_i}_{i \in I}$ in $V$ indexed by $(I, \leq)$ such that
    \begin{equation} 
    \label{eq:characterisation-of-zero-convergent-net-that-depends-on-a-parameter:zero-convergence}
      \ForEach w \in W \Holds \lim_{i \in I} \Psi(v_i, w) = 0,
    \end{equation}
    if and only if, for each finite subset $Q$ of $W$ and each positive real number $\varepsilon \in \R_{> 0}$, there is an element $v \in V$ such that
    \begin{equation}
    \label{eq:characterisation-of-zero-convergent-net-that-depends-on-a-parameter:inequality}
      \ForEach q \in Q \Holds \Psi(v, q) < \varepsilon.
    \end{equation}
  \end{lemma}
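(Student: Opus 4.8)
The plan is to prove the two implications separately. The forward implication only extracts one good index from a given net, using that every finite subset of a directed set has a common upper bound; the reverse implication is the substantive one, and the key device is the standard construction of a net whose index set is the collection of all pairs $(Q, \varepsilon)$ with $Q \subseteq W$ finite and $\varepsilon \in \R_{>0}$, directed so that larger indices mean larger $Q$ and smaller $\varepsilon$.

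For the forward direction I would take a net $\net{v_i}_{i \in I}$ satisfying \eqref{eq:characterisation-of-zero-convergent-net-that-depends-on-a-parameter:zero-convergence}, fix a finite $Q \subseteq W$ and an $\varepsilon \in \R_{>0}$, and for each $q \in Q$ extract from the convergence of $\sequence{\Psi(v_i, q)}_{i \in I}$ to $0$ an index $i_q \in I$ with $\Psi(v_i, q) < \varepsilon$ for all $i \geq i_q$. Since $I$ is directed and $Q$ is finite, $\set{i_q \suchthat q \in Q}$ has a common upper bound $i^\ast \in I$ (by induction on $\abs{Q}$), and then $v = v_{i^\ast}$ satisfies \eqref{eq:characterisation-of-zero-convergent-net-that-depends-on-a-parameter:inequality}.

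For the reverse direction I would let $I$ be the set of all pairs $(Q, \varepsilon)$ with $Q \subseteq W$ finite and $\varepsilon \in \R_{>0}$, ordered by declaring $(Q_1, \varepsilon_1) \leq (Q_2, \varepsilon_2)$ if and only if $Q_1 \subseteq Q_2$ and $\varepsilon_1 \geq \varepsilon_2$. This $(I, \leq)$ is directed, since $(Q_1 \cup Q_2, \min\set{\varepsilon_1, \varepsilon_2})$ dominates both $(Q_1, \varepsilon_1)$ and $(Q_2, \varepsilon_2)$. For each index $i = (Q, \varepsilon)$ the hypothesis yields some $v_i \in V$ with $\Psi(v_i, q) < \varepsilon$ for every $q \in Q$, and this defines a net $\net{v_i}_{i \in I}$. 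To verify \eqref{eq:characterisation-of-zero-convergent-net-that-depends-on-a-parameter:zero-convergence} I would fix $w \in W$ and $\delta \in \R_{>0}$; then for every index $i = (Q, \varepsilon) \geq (\set{w}, \delta)$ one has $w \in Q$ and $\varepsilon \leq \delta$, hence $\Psi(v_i, w) < \varepsilon \leq \delta$.

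The step I expect to need the most care is precisely this last one: $\lim_{i \in I} \Psi(v_i, w) = 0$ is two-sided, whereas \eqref{eq:characterisation-of-zero-convergent-net-that-depends-on-a-parameter:inequality} bounds $\Psi$ only from above. Hence the reverse implication should be applied with $\Psi$ non-negative — which is the situation in the intended use in Definition~\ref{def:right-folner-net}, where $\Psi$ is a quotient of cardinalities — so that $0 \leq \Psi(v_i, w) < \delta$ closes the argument; otherwise one reads the limit in \eqref{eq:characterisation-of-zero-convergent-net-that-depends-on-a-parameter:zero-convergence} as convergence to $0$ from above. Everything else is routine bookkeeping with directed sets.
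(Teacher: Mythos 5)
Your proof is correct and follows essentially the same route as the paper: the same extraction of a common upper bound over the finite set $Q$ for the forward direction, and the same directed index set of pairs $(Q, \varepsilon)$ with the same order and the same choice of $v_i$ for the converse. Your closing caveat is well taken and worth recording — the paper's own proof likewise only establishes $\Psi(v_i, w) < \varepsilon_0$ rather than $\abs{\Psi(v_i, w)} < \varepsilon_0$, so the lemma as stated really presupposes (and is only ever applied to) non-negative $\Psi$, as in Lemma~\ref{lem:epsilon-characterisation-of-folner-net}.
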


  \begin{proof}
    First, let there be a net $\net{v_i}_{i \in I}$ in $V$ indexed by $(I, \leq)$ such that \eqref{eq:characterisation-of-zero-convergent-net-that-depends-on-a-parameter:zero-convergence} holds. Furthermore, let $Q \subseteq W$ be finite and let $\varepsilon \in \R_{> 0}$. Because \eqref{eq:characterisation-of-zero-convergent-net-that-depends-on-a-parameter:zero-convergence} holds, for each $q \in Q$, there is an $i_q \in I$ such that,
    \begin{equation*}
      \ForEach i \in I \Holds (i \geq i_q \implies \Psi(v_i, q) < \varepsilon).
    \end{equation*}
    Because $(I, \leq)$ is a directed set and $Q$ is finite, there is an $i \in I$ such that, for each $q \in Q$, we have $i \geq i_q$. Put $v = v_i$. Then, \eqref{eq:characterisation-of-zero-convergent-net-that-depends-on-a-parameter:inequality} holds.

    Secondly, for each finite $Q \subseteq W$ and each $\varepsilon \in \R_{> 0}$, let there be a $v \in V$ such that \eqref{eq:characterisation-of-zero-convergent-net-that-depends-on-a-parameter:inequality} holds. Furthermore, let
    \begin{equation*}
      I = \set{Q \subseteq W \suchthat Q \text{ is finite}} \times \R_{> 0}
    \end{equation*}
    and let $\leq$ be the preorder on $I$ given by
    \begin{equation*}
      \ForEach (Q, \varepsilon) \in I \ForEach (Q', \varepsilon') \in I \Holds
          (Q, \varepsilon) \leq (Q', \varepsilon') \iff Q \subseteq Q' \land \varepsilon \geq \varepsilon'.
    \end{equation*}
    For each $(Q, \varepsilon) \in I$ and each $(Q', \varepsilon') \in I$, the element $(Q \cup Q', \min(\varepsilon, \varepsilon'))$ of $I$ is an upper bound of $(Q, \varepsilon)$ and of $(Q', \varepsilon')$. Hence, $(I, \leq)$ is a directed set.

    By precondition, for each $i = (Q, \varepsilon) \in I$, there is a $v_i \in V$ such that
    \begin{equation*}
      \ForEach q \in Q \Holds \Psi(v_i, q) < \varepsilon.
    \end{equation*}
    Let $w \in W$ and let $\varepsilon_0 \in \R_{> 0}$. Put $i_0 = (\set{w}, \varepsilon_0)$. For each $i = (Q, \varepsilon) \in I$ with $i \geq i_0$, we have $w \in Q$ and $\varepsilon \leq \varepsilon_0$. Hence,
    \begin{equation*}
      \ForEach i \in I \Holds (i \geq i_0 \implies \Psi(v_i, w) < \varepsilon_0).
    \end{equation*}
    Therefore, $\net{v_i}_{i \in I}$ is a net in $V$ indexed by $(I, \leq)$ such that \eqref{eq:characterisation-of-zero-convergent-net-that-depends-on-a-parameter:zero-convergence} holds. \qed
  \end{proof}

  \begin{lemma} 
  \label{lem:epsilon-characterisation-of-folner-net} 
    There is a right Følner net in $\mathcal{R}$ if and only if, for each finite subset $E$ of $G \quotient G_0$ and each positive real number $\varepsilon \in \R_{> 0}$, there is a non-empty and finite subset $F$ of $M$ such that
    \begin{equation*}
      \ForEach e \in E \Holds \frac{\abs{F \smallsetminus (\blank \rightsemiaction e)^{-1}(F)}}{\abs{F}} < \varepsilon.
    \end{equation*}
  \end{lemma}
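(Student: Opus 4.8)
The plan is to apply Lemma~\ref{lem:characterisation-of-zero-convergent-net-that-depends-on-a-parameter} with the instantiation $V = \set{F \subseteq M \suchthat F \neq \emptyset, F \text{ finite}}$, $W = G \quotient G_0$, and $\Psi$ the map
\begin{equation*}
  \Psi \from V \times W \to \R, \quad (F, \mathfrak{g}) \mapsto \frac{\abs{F \smallsetminus (\blank \rightsemiaction \mathfrak{g})^{-1}(F)}}{\abs{F}}.
\end{equation*}
Note that $\Psi$ is well-defined since $F$ is non-empty, so the denominator is non-zero, and $\Psi(F, \mathfrak{g}) \in [0, 1] \subseteq \R$.

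First I would observe that, by Definition~\ref{def:right-folner-net}, a net $\net{F_i}_{i \in I}$ in $V$ is a right Følner net in $\mathcal{R}$ if and only if $\ForEach \mathfrak{g} \in G \quotient G_0 \Holds \lim_{i \in I} \Psi(F_i, \mathfrak{g}) = 0$, which is exactly condition~\eqref{eq:characterisation-of-zero-convergent-net-that-depends-on-a-parameter:zero-convergence} of Lemma~\ref{lem:characterisation-of-zero-convergent-net-that-depends-on-a-parameter} for this choice of $\Psi$. Hence the existence of a right Følner net in $\mathcal{R}$ is equivalent, by that lemma, to the statement that for each finite subset $E$ of $G \quotient G_0$ and each $\varepsilon \in \R_{> 0}$ there is an $F \in V$ with $\ForEach e \in E \Holds \Psi(F, e) < \varepsilon$, which unwinds to precisely the right-hand side of the asserted equivalence. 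So the proof is essentially a one-line invocation of the preceding lemma after checking that $\Psi$ is a well-defined real-valued map on $V \times W$.

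I do not anticipate a real obstacle here; the only point requiring a word of care is that $V$ is non-empty (so that "net in $V$" makes sense and the quantifier structure matches the lemma), which holds because $M$ is non-empty — indeed $M$ contains $m_0$ from the coordinate system, so $\set{m_0} \in V$. Everything else is a direct translation of notation, with $\mathfrak{g}$ in the lemma playing the role of $e \in E$ and finite subsets $Q$ of $W$ being the finite subsets $E$ of $G \quotient G_0$.
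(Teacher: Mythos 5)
Your proposal is correct and matches the paper's proof exactly: the paper also proves this as a direct consequence of Lemma~\ref{lem:characterisation-of-zero-convergent-net-that-depends-on-a-parameter} with the very same instantiation of $V$, $W$, and $\Psi$. The additional checks you mention (well-definedness of $\Psi$ and non-emptiness of $V$) are harmless and reasonable to note, though the paper leaves them implicit.
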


  \begin{proof}
    This is a direct consequence of Lemma~\ref{lem:characterisation-of-zero-convergent-net-that-depends-on-a-parameter} with
    \begin{align*}
      \Psi \from \set{F \subseteq M \suchthat F \neq \emptyset, F \text{ finite}} \times G \quotient G_0 &\to     \R,\\
                                                                                       (F, \mathfrak{g}) &\mapsto \frac{\abs{F \smallsetminus (\blank \rightsemiaction \mathfrak{g})^{-1}(F)}}{\abs{F}}. \tag*{\qed}
    \end{align*}
  \end{proof}

%

  \begin{lemma}
  \label{lem:rightsemiaction-can-be-undone}
    Let $m$ be an element of $M$, and let $\mathfrak{g}$ be an element of $G \quotient G_0$. There is an element $g \in \mathfrak{g}$ such that
    \begin{equation*}
      \ForEach \mathfrak{g}' \in G \quotient G_0 \Holds (m \rightsemiaction \mathfrak{g}) \rightsemiaction \mathfrak{g}' = m \rightsemiaction g \cdot \mathfrak{g}'.
    \end{equation*}
  \end{lemma}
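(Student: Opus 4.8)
The plan is to exploit the defining \emph{defect} property of the right semi-action $\rightsemiaction$ in order to single out a representative $g$ of $\mathfrak{g}$ for which the defect element supplied by that property can be absorbed, leaving an exact identity rather than one up to an element of $G_0$.

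First I would fix an arbitrary representative $h \in \mathfrak{g}$, so that $h G_0 = \mathfrak{g}$. Applying the defect property of $\rightsemiaction$ to the point $m$ and the group element $h$ yields an element $g_0 \in G_0$ such that, for every $\mathfrak{g}'' \in G \quotient G_0$,
\begin{equation*}
  m \rightsemiaction h \cdot \mathfrak{g}'' = (m \rightsemiaction h G_0) \rightsemiaction g_0 \cdot \mathfrak{g}'' = (m \rightsemiaction \mathfrak{g}) \rightsemiaction g_0 \cdot \mathfrak{g}''.
\end{equation*}
Next I would put $g = h g_0^{-1}$. Since $g_0^{-1} \in G_0$, we have $g G_0 = h g_0^{-1} G_0 = h G_0 = \mathfrak{g}$, so $g$ is indeed an element of $\mathfrak{g}$. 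Then, for an arbitrary $\mathfrak{g}' \in G \quotient G_0$, I would instantiate the displayed identity with $\mathfrak{g}'' = g_0^{-1} \cdot \mathfrak{g}'$ (which again lies in $G \quotient G_0$) and use that left multiplication is a genuine left action of $G$ on $G \quotient G_0$, so that $g_0 \cdot (g_0^{-1} \cdot \mathfrak{g}') = \mathfrak{g}'$ and $h \cdot (g_0^{-1} \cdot \mathfrak{g}') = (h g_0^{-1}) \cdot \mathfrak{g}' = g \cdot \mathfrak{g}'$. This turns the identity into $(m \rightsemiaction \mathfrak{g}) \rightsemiaction \mathfrak{g}' = m \rightsemiaction g \cdot \mathfrak{g}'$, which is exactly the claim.

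I expect no serious obstacle here beyond careful bookkeeping: the one point that needs attention is keeping the coset notation $g \cdot \mathfrak{g}'$ straight and verifying that reindexing the defect identity by $g_0^{-1} \cdot \mathfrak{g}'$ genuinely cancels $g_0$, which is precisely where the well-definedness and associativity of left multiplication on $G \quotient G_0$ are used. (One could instead argue directly from the explicit formula $m \rightsemiaction g G_0 = g_{m_0, m} g \leftaction m_0$, but routing through the defect property is shorter and reuses a property already established for $\rightsemiaction$.)
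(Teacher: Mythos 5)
Your proposal is correct and is essentially the paper's own proof: both pick a representative of $\mathfrak{g}$, invoke the defect property of $\rightsemiaction$ to obtain $g_0 \in G_0$, substitute $g_0^{-1} \cdot \mathfrak{g}'$ into the resulting identity, and observe that the element $h g_0^{-1}$ still lies in the coset $\mathfrak{g}$. No gaps; the bookkeeping you flag is exactly what the paper does implicitly.
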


  \begin{proof}
    There is a $g \in G$ such that $g G_0 = \mathfrak{g}$. Moreover, because $\rightsemiaction$ is a semi-action with defect $G_0$, there is a $g_0 \in G_0$ such that
    \begin{equation*} 
      \ForEach \mathfrak{g}' \in G \quotient G_0 \Holds (m \rightsemiaction g G_0) \rightsemiaction \mathfrak{g}' = m \rightsemiaction g \cdot (g_0^{-1} \cdot \mathfrak{g}').
    \end{equation*}
    Because $g \cdot (g_0^{-1} \cdot \mathfrak{g}') = g g_0^{-1} \cdot \mathfrak{g}'$ and $g g_0^{-1} \in \mathfrak{g}$, the statement holds. \qed
  \end{proof}

  \begin{lemma}
  \label{lem:liberation-by-n-yields-element-in-set-setminus-bigcup-liberation-by-inverse-times-n-prime}
    Let $A$ and $A'$ be two subsets of $M$, and let $\mathfrak{g}$ and $\mathfrak{g}'$ be two elements of $G \quotient G_0$. Then, for each element $m \in (\blank \rightsemiaction \mathfrak{g})^{-1}(A) \smallsetminus (\blank \rightsemiaction \mathfrak{g}')^{-1}(A')$,
    \begin{gather*}
      m \rightsemiaction \mathfrak{g} \in \bigcup_{g \in \mathfrak{g}} A \smallsetminus (\blank \rightsemiaction g^{-1} \cdot \mathfrak{g}')^{-1}(A'),\\
      m \rightsemiaction \mathfrak{g}' \in \bigcup_{g' \in \mathfrak{g}'} (\blank \rightsemiaction (g')^{-1} \cdot \mathfrak{g})^{-1}(A) \smallsetminus A'.
    \end{gather*}
  \end{lemma}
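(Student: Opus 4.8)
The plan is to reduce both memberships to Lemma~\ref{lem:rightsemiaction-can-be-undone}, which is the only available substitute for the (failing) associativity of $\rightsemiaction$. Throughout, fix an element $m \in (\blank \rightsemiaction \mathfrak{g})^{-1}(A) \smallsetminus (\blank \rightsemiaction \mathfrak{g}')^{-1}(A')$; unpacking the definitions, this says precisely that $m \rightsemiaction \mathfrak{g} \in A$ while $m \rightsemiaction \mathfrak{g}' \notin A'$. These two facts are the entire input.

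For the first claim, I would apply Lemma~\ref{lem:rightsemiaction-can-be-undone} to $m$ and $\mathfrak{g}$ to obtain a representative $g \in \mathfrak{g}$ such that $(m \rightsemiaction \mathfrak{g}) \rightsemiaction \mathfrak{h} = m \rightsemiaction g \cdot \mathfrak{h}$ for every $\mathfrak{h} \in G \quotient G_0$. Specialising $\mathfrak{h}$ to $g^{-1} \cdot \mathfrak{g}'$ and using $g \cdot (g^{-1} \cdot \mathfrak{g}') = \mathfrak{g}'$ (ordinary, well-defined left multiplication on cosets) gives $(m \rightsemiaction \mathfrak{g}) \rightsemiaction g^{-1} \cdot \mathfrak{g}' = m \rightsemiaction \mathfrak{g}' \notin A'$, that is, $m \rightsemiaction \mathfrak{g} \notin (\blank \rightsemiaction g^{-1} \cdot \mathfrak{g}')^{-1}(A')$. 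Combined with $m \rightsemiaction \mathfrak{g} \in A$, this places $m \rightsemiaction \mathfrak{g}$ in the set indexed by this particular $g$, hence in the union over all $g \in \mathfrak{g}$.

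The second claim is the mirror-image argument: apply Lemma~\ref{lem:rightsemiaction-can-be-undone} to $m$ and $\mathfrak{g}'$ to get $g' \in \mathfrak{g}'$ with $(m \rightsemiaction \mathfrak{g}') \rightsemiaction \mathfrak{h} = m \rightsemiaction g' \cdot \mathfrak{h}$; specialise $\mathfrak{h} = (g')^{-1} \cdot \mathfrak{g}$ to get $(m \rightsemiaction \mathfrak{g}') \rightsemiaction (g')^{-1} \cdot \mathfrak{g} = m \rightsemiaction \mathfrak{g} \in A$, so $m \rightsemiaction \mathfrak{g}' \in (\blank \rightsemiaction (g')^{-1} \cdot \mathfrak{g})^{-1}(A)$; and $m \rightsemiaction \mathfrak{g}' \notin A'$ finishes it, exhibiting $m \rightsemiaction \mathfrak{g}'$ in the set indexed by this $g'$.

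I do not anticipate a real obstacle: the whole content is bookkeeping of which coset gets left-multiplied by what. The one place to stay careful is not to treat $\rightsemiaction$ as associative --- the union over representatives $g \in \mathfrak{g}$ (respectively $g' \in \mathfrak{g}'$) appears in the statement exactly because the ``undoing'' representative supplied by Lemma~\ref{lem:rightsemiaction-can-be-undone} is not canonical --- and to keep track that $g^{-1} \cdot \mathfrak{g}'$ and $(g')^{-1} \cdot \mathfrak{g}$ denote left multiplication of a group element on a coset, so that the cancellations $g \cdot (g^{-1} \cdot \mathfrak{g}') = \mathfrak{g}'$ and $g' \cdot ((g')^{-1} \cdot \mathfrak{g}) = \mathfrak{g}$ are literal group-theoretic identities.
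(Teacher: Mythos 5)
Your proposal is correct and follows exactly the paper's own argument: unpack the hypothesis into $m \rightsemiaction \mathfrak{g} \in A$ and $m \rightsemiaction \mathfrak{g}' \notin A'$, then invoke Lemma~\ref{lem:rightsemiaction-can-be-undone} once for $\mathfrak{g}$ and once for $\mathfrak{g}'$, specialising to the cosets $g^{-1} \cdot \mathfrak{g}'$ and $(g')^{-1} \cdot \mathfrak{g}$ so that the cancellations recover $m \rightsemiaction \mathfrak{g}'$ and $m \rightsemiaction \mathfrak{g}$ respectively. Your explicit remarks about the non-canonicity of the representative (hence the union) and about the coset cancellations being literal group identities are exactly the points the paper leaves implicit.
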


  \begin{proof}[Lemma~\ref{lem:liberation-by-n-yields-element-in-set-setminus-bigcup-liberation-by-inverse-times-n-prime}]
    Let $m \in (\blank \rightsemiaction \mathfrak{g})^{-1}(A) \smallsetminus (\blank \rightsemiaction \mathfrak{g}')^{-1}(A')$. Then, $m \rightsemiaction \mathfrak{g} \in A$ and $m \rightsemiaction \mathfrak{g}' \notin A'$. According to Lemma~\ref{lem:rightsemiaction-can-be-undone}, there is a $g \in \mathfrak{g}$ and a $g' \in \mathfrak{g}'$ such that $(m \rightsemiaction \mathfrak{g}) \rightsemiaction g^{-1} \cdot \mathfrak{g}' = m \rightsemiaction \mathfrak{g}' \notin A'$ and $(m \rightsemiaction \mathfrak{g}') \rightsemiaction (g')^{-1} \cdot \mathfrak{g} = m \rightsemiaction \mathfrak{g} \in A$. Hence, $m \rightsemiaction \mathfrak{g} \notin (\blank \rightsemiaction g^{-1} \cdot \mathfrak{g}')^{-1}(A')$ and $m \rightsemiaction \mathfrak{g}' \in (\blank \rightsemiaction (g')^{-1} \cdot \mathfrak{g})^{-1}(A)$. Therefore, $m \rightsemiaction \mathfrak{g} \in A \smallsetminus (\blank \rightsemiaction g^{-1} \cdot \mathfrak{g}')^{-1}(A')$ and $m \rightsemiaction \mathfrak{g}' \in (\blank \rightsemiaction (g')^{-1} \cdot \mathfrak{g})^{-1}(A) \smallsetminus A'$. In conclusion, $m \rightsemiaction \mathfrak{g} \in \bigcup_{g \in \mathfrak{g}} A \smallsetminus (\blank \rightsemiaction g^{-1} \cdot \mathfrak{g}')^{-1}(A')$ and $m \rightsemiaction \mathfrak{g}' \in \bigcup_{g' \in \mathfrak{g}'} (\blank \rightsemiaction (g')^{-1} \cdot \mathfrak{g})^{-1}(A) \smallsetminus A'$. \qed
  \end{proof}

  \begin{lemma}
  \label{lem:cardinality-of-inverse-image-of-liberation-minus-the-same-less-than-or-equal-to-whatever}
    Let $G_0$ be finite, let $F$ and $F'$ be two finite subsets of $M$, and let $\mathfrak{g}$ and $\mathfrak{g}'$ be two elements of $G \quotient G_0$. Then,
    \begin{equation*}
      \abs{(\blank \rightsemiaction \mathfrak{g})^{-1}(F) \smallsetminus (\blank \rightsemiaction \mathfrak{g}')^{-1}(F')}
      \leq
      \begin{dcases*} 
        \abs{G_0}^2 \cdot \max_{g \in \mathfrak{g}} \abs{F \smallsetminus (\blank \rightsemiaction g^{-1} \cdot \mathfrak{g}')^{-1}(F')},\\
        \abs{G_0}^2 \cdot \max_{g' \in \mathfrak{g}'} \abs{(\blank \rightsemiaction (g')^{-1} \cdot \mathfrak{g})^{-1}(F) \smallsetminus F'}.
      \end{dcases*}
    \end{equation*}
  \end{lemma}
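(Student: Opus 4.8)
The plan is to transport the set $X = (\blank \rightsemiaction \mathfrak{g})^{-1}(F) \smallsetminus (\blank \rightsemiaction \mathfrak{g}')^{-1}(F')$ along the maps $\blank \rightsemiaction \mathfrak{g}$ and $\blank \rightsemiaction \mathfrak{g}'$, to invoke Lemma~\ref{lem:liberation-by-n-yields-element-in-set-setminus-bigcup-liberation-by-inverse-times-n-prime} to see that the images are contained in the sets appearing on the right-hand side, and then to recover $\abs{X}$ from the cardinality of the image via Lemma~\ref{lem:liberation-preimage}.

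First I would record two finiteness facts. Since $F$ is finite and $G_0$ is finite, Lemma~\ref{lem:liberation-preimage} gives $\abs{(\blank \rightsemiaction \mathfrak{g})^{-1}(F)} \leq \abs{G_0} \cdot \abs{F} < \infty$, so $X$, being a subset of this set, is finite; and since $\mathfrak{g}$ and $\mathfrak{g}'$ are left cosets of $G_0$, we have $\abs{\mathfrak{g}} = \abs{\mathfrak{g}'} = \abs{G_0}$, in particular both are non-empty, so the two maxima in the statement range over non-empty finite index sets and are well-defined.

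For the first bound I would set $Y = X \rightsemiaction \mathfrak{g} = \set{m \rightsemiaction \mathfrak{g} \suchthat m \in X}$. By Lemma~\ref{lem:liberation-by-n-yields-element-in-set-setminus-bigcup-liberation-by-inverse-times-n-prime} applied with $A = F$ and $A' = F'$, every element of $Y$ lies in $\bigcup_{g \in \mathfrak{g}} F \smallsetminus (\blank \rightsemiaction g^{-1} \cdot \mathfrak{g}')^{-1}(F')$, a union of $\abs{G_0}$ sets, whence $\abs{Y} \leq \abs{G_0} \cdot \max_{g \in \mathfrak{g}} \abs{F \smallsetminus (\blank \rightsemiaction g^{-1} \cdot \mathfrak{g}')^{-1}(F')}$. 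On the other hand $X \subseteq (\blank \rightsemiaction \mathfrak{g})^{-1}(Y)$ and $Y \subseteq F$ is finite, so Lemma~\ref{lem:liberation-preimage} gives $\abs{X} \leq \abs{(\blank \rightsemiaction \mathfrak{g})^{-1}(Y)} \leq \abs{G_0} \cdot \abs{Y}$; chaining the two inequalities yields the first bound. The second bound follows by the symmetric argument, replacing $\blank \rightsemiaction \mathfrak{g}$ by $\blank \rightsemiaction \mathfrak{g}'$ throughout and using the second inclusion of Lemma~\ref{lem:liberation-by-n-yields-element-in-set-setminus-bigcup-liberation-by-inverse-times-n-prime}.

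I do not expect a genuine obstacle. The only points needing care are the finiteness bookkeeping (so that Lemma~\ref{lem:liberation-preimage} is applicable and the maxima make sense) and the observation that the unions indexed by $g \in \mathfrak{g}$ and by $g' \in \mathfrak{g}'$ have exactly $\abs{G_0}$ terms — this is where one factor of $\abs{G_0}$ comes from, the other factor coming from the preimage bound in Lemma~\ref{lem:liberation-preimage}.
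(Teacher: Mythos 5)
Your proposal is correct and follows essentially the same route as the paper: both transport $A = (\blank \rightsemiaction \mathfrak{g})^{-1}(F) \smallsetminus (\blank \rightsemiaction \mathfrak{g}')^{-1}(F')$ into $\bigcup_{g \in \mathfrak{g}} F \smallsetminus (\blank \rightsemiaction g^{-1} \cdot \mathfrak{g}')^{-1}(F')$ (respectively its primed analogue) via Lemma~\ref{lem:liberation-by-n-yields-element-in-set-setminus-bigcup-liberation-by-inverse-times-n-prime}, and recover $\abs{A}$ from the image using the fibre bound of Lemma~\ref{lem:liberation-preimage} together with $\abs{\mathfrak{g}} = \abs{G_0}$, one factor of $\abs{G_0}$ coming from each step. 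The only cosmetic difference is that the paper bounds $\abs{A}$ directly by $\abs{G_0} \cdot \abs{\bigcup_{g \in \mathfrak{g}} B_g}$, whereas you first name the image $Y$ and then chain $\abs{A} \leq \abs{G_0} \cdot \abs{Y}$.
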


  \begin{proof}
    Put $A = (\blank \rightsemiaction \mathfrak{g})^{-1}(F) \smallsetminus (\blank \rightsemiaction \mathfrak{g}')^{-1}(F')$. For each $g \in \mathfrak{g}$, put $B_g = F \smallsetminus (\blank \rightsemiaction g^{-1} \cdot \mathfrak{g}')^{-1}(F')$. For each $g' \in \mathfrak{g}'$, put $B_{g'}' = (\blank \rightsemiaction (g')^{-1} \cdot \mathfrak{g})^{-1}(F) \smallsetminus F'$.

    According to Lemma~\ref{lem:liberation-by-n-yields-element-in-set-setminus-bigcup-liberation-by-inverse-times-n-prime}, the restrictions $(\blank \rightsemiaction \mathfrak{g})\restriction_{A \to \bigcup_{g \in \mathfrak{g}} B_g}$ and $(\blank \rightsemiaction \mathfrak{g}')\restriction_{A \to \bigcup_{g' \in \mathfrak{g}'} B_{g'}'}$ are well-defined. Moreover, for each $m \in M$, according to Lemma~\ref{lem:liberation-preimage}, we have $\abs{(\blank \rightsemiaction \mathfrak{g})^{-1}(m)} \leq \abs{G_0}$ and $\abs{(\blank \rightsemiaction \mathfrak{g}')^{-1}(m)} \leq \abs{G_0}$. Therefore, because $\abs{\mathfrak{g}} = \abs{G_0}$, 
    \begin{equation*}
      \abs{A}
      \leq \abs{G_0} \cdot \abs{\bigcup_{g \in \mathfrak{g}} B_g}
      \leq \abs{G_0} \cdot \sum_{g \in \mathfrak{g}} \abs{B_g}
      \leq \abs{G_0}^2 \cdot \max_{g \in \mathfrak{g}} \abs{B_g}
    \end{equation*}
    and analogously
    \begin{equation*}
      \abs{A} \leq \abs{G_0}^2 \cdot \max_{g' \in \mathfrak{g}'} \abs{B_{g'}'}. \tag*{\qed}
    \end{equation*}
  \end{proof}

  \begin{lemma} 
  \label{lem:reversed-sets-characterisation-of-folner-nets}
    Let $G_0$ be finite and let $\net{F_i}_{i \in I}$ be a net in $\set{F \subseteq M \suchthat F \neq \emptyset, F \text{ finite}}$ indexed by $(I, \leq)$. The net $\net{F_i}_{i \in I}$ is a right Følner net in $\mathcal{R}$ if and only if
    \begin{equation}
    \label{eq:reversed-sets-characterisation-of-folner-nets:condition}
      \ForEach \mathfrak{g} \in G \quotient G_0 \Holds \lim_{i \in I} \frac{\abs{(\blank \rightsemiaction \mathfrak{g})^{-1}(F_i) \smallsetminus F_i}}{\abs{F_i}} = 0.
    \end{equation}
  \end{lemma}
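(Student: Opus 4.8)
The plan is to reduce both implications to Lemma~\ref{lem:cardinality-of-inverse-image-of-liberation-minus-the-same-less-than-or-equal-to-whatever}, using the identity coset $G_0$ as one of the two cosets. The point is that the first defining property of a right semi-action with defect $G_0$ gives $m \rightsemiaction G_0 = m$ for every $m \in M$, hence $(\blank \rightsemiaction G_0)^{-1}(F_i) = F_i$. Plugging $\mathfrak{g}' = G_0$ (resp.\ the first coset $= G_0$) into Lemma~\ref{lem:cardinality-of-inverse-image-of-liberation-minus-the-same-less-than-or-equal-to-whatever} then bounds each of the two quantities $\abs{(\blank \rightsemiaction \mathfrak{g})^{-1}(F_i) \smallsetminus F_i}$ and $\abs{F_i \smallsetminus (\blank \rightsemiaction \mathfrak{g})^{-1}(F_i)}$ by $\abs{G_0}^2$ times a finite maximum, over $g \in \mathfrak{g}$ (resp.\ $g' \in \mathfrak{g}$), of a quantity of the other type in which $\mathfrak{g}$ is replaced by the single coset $g^{-1} \cdot G_0 = g^{-1} G_0$ (resp.\ $(g')^{-1} \cdot G_0 = (g')^{-1} G_0$). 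Both directions then become the same bookkeeping.

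Concretely, for the forward direction I would assume $\net{F_i}_{i \in I}$ is a right Følner net, fix $\mathfrak{g} \in G \quotient G_0$, and apply Lemma~\ref{lem:cardinality-of-inverse-image-of-liberation-minus-the-same-less-than-or-equal-to-whatever} with $F = F_i$, $\mathfrak{g}' = G_0$, $F' = F_i$, using its first bound:
\begin{equation*}
  \abs{(\blank \rightsemiaction \mathfrak{g})^{-1}(F_i) \smallsetminus F_i} \leq \abs{G_0}^2 \cdot \max_{g \in \mathfrak{g}} \abs{F_i \smallsetminus (\blank \rightsemiaction g^{-1} G_0)^{-1}(F_i)}.
\end{equation*}
Dividing by $\abs{F_i}$: the coset $\mathfrak{g}$ has $\abs{G_0}$ elements, so this is a finite maximum, and for each $g \in \mathfrak{g}$ the net $\abs{F_i \smallsetminus (\blank \rightsemiaction g^{-1} G_0)^{-1}(F_i)} / \abs{F_i}$ tends to $0$ by the Følner condition applied to the single coset $g^{-1} G_0 \in G \quotient G_0$. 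A finite maximum of nets converging to $0$ converges to $0$, so \eqref{eq:reversed-sets-characterisation-of-folner-nets:condition} holds.

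The reverse direction is the mirror image: assume \eqref{eq:reversed-sets-characterisation-of-folner-nets:condition}, fix $\mathfrak{g} \in G \quotient G_0$, write $F_i = (\blank \rightsemiaction G_0)^{-1}(F_i)$, and apply Lemma~\ref{lem:cardinality-of-inverse-image-of-liberation-minus-the-same-less-than-or-equal-to-whatever} with first coset $G_0$, second coset $\mathfrak{g}$, and $F = F' = F_i$, using its second bound:
\begin{equation*}
  \abs{F_i \smallsetminus (\blank \rightsemiaction \mathfrak{g})^{-1}(F_i)} \leq \abs{G_0}^2 \cdot \max_{g' \in \mathfrak{g}} \abs{(\blank \rightsemiaction (g')^{-1} G_0)^{-1}(F_i) \smallsetminus F_i},
\end{equation*}
then divide by $\abs{F_i}$ and conclude exactly as before, this time invoking \eqref{eq:reversed-sets-characterisation-of-folner-nets:condition} for each coset $(g')^{-1} G_0$. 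There is no genuine obstacle here; the only things that need care are the identification $(\blank \rightsemiaction G_0)^{-1}(F_i) = F_i$, the observation that $g^{-1} \cdot G_0$ and $(g')^{-1} \cdot G_0$ are single cosets (so the hypotheses apply term by term), and the elementary fact that a finite maximum of nets converging to $0$ converges to $0$, which one may either inline or record as a trivial auxiliary remark.
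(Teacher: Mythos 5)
Your proposal is correct and follows essentially the same route as the paper: both directions are reduced to Lemma~\ref{lem:cardinality-of-inverse-image-of-liberation-minus-the-same-less-than-or-equal-to-whatever} via the identification $F_i = (\blank \rightsemiaction G_0)^{-1}(F_i)$, using the first bound (with $\mathfrak{g}' = G_0$) for one implication and the second bound (with first coset $G_0$) for the other, together with the finiteness $\abs{\mathfrak{g}} = \abs{G_0}$ to pass the limit through the maximum. No discrepancies to report.
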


  \begin{proof}
    Let $\mathfrak{g} \in G \quotient G_0$. Furthermore, let $i \in I$. Because $F_i = (\blank \rightsemiaction G_0)^{-1}(F_i)$, according to Lemma~\ref{lem:cardinality-of-inverse-image-of-liberation-minus-the-same-less-than-or-equal-to-whatever},
    \begin{equation*}
      \abs{(\blank \rightsemiaction \mathfrak{g})^{-1}(F_i) \smallsetminus F_i}
      \leq \abs{G_0}^2 \cdot \max_{g \in \mathfrak{g}} \abs{F_i \smallsetminus (\blank \rightsemiaction g^{-1} G_0)^{-1}(F_i)}
    \end{equation*}
    and
    \begin{equation*}
      \abs{F_i \smallsetminus (\blank \rightsemiaction \mathfrak{g})^{-1}(F_i)}
      \leq \abs{G_0}^2 \cdot \max_{g \in \mathfrak{g}} \abs{(\blank \rightsemiaction g^{-1} G_0)^{-1}(F_i) \smallsetminus F_i}.
    \end{equation*}
    Moreover, $\abs{\mathfrak{g}} = \abs{G_0} < \infty$. Therefore, if $\net{F_i}_{i \in I}$ is a right Følner net in $\mathcal{R}$, then
    \begin{equation*}
      \lim_{i \in I} \frac{\abs{(\blank \rightsemiaction \mathfrak{g})^{-1}(F_i) \smallsetminus F_i}}{\abs{F_i}} = 0;
    \end{equation*}
    and, if \eqref{eq:reversed-sets-characterisation-of-folner-nets:condition} holds, then
    \begin{equation*}
      \lim_{i \in I} \frac{\abs{F_i \smallsetminus (\blank \rightsemiaction \mathfrak{g})^{-1}(F_i)}}{\abs{F_i}} = 0.
    \end{equation*}
    In conclusion, $\net{F_i}_{i \in I}$ is a right Følner net in $\mathcal{R}$ if and only if \eqref{eq:reversed-sets-characterisation-of-folner-nets:condition} holds.
  %
  \end{proof}

  \begin{lemma} 
    Let $G_0$ be finite. There is a right Følner net in $\mathcal{R}$ if and only if, for each finite subset $E$ of $G \quotient G_0$ and each positive real number $\varepsilon \in \R_{> 0}$, there is a non-empty and finite subset $F$ of $M$ such that
    \begin{equation*}
      \ForEach e \in E \Holds \frac{\abs{(\blank \rightsemiaction e)^{-1}(F) \smallsetminus F}}{\abs{F}} < \varepsilon.
    \end{equation*}
  \end{lemma}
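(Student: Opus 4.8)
The plan is to reduce the statement to two results already in hand. By Lemma~\ref{lem:reversed-sets-characterisation-of-folner-nets}, since $G_0$ is finite, a net $\net{F_i}_{i \in I}$ of non-empty finite subsets of $M$ is a right Følner net in $\mathcal{R}$ if and only if
\begin{equation*}
  \ForEach \mathfrak{g} \in G \quotient G_0 \Holds \lim_{i \in I} \frac{\abs{(\blank \rightsemiaction \mathfrak{g})^{-1}(F_i) \smallsetminus F_i}}{\abs{F_i}} = 0.
\end{equation*}
Hence the existence of a right Følner net in $\mathcal{R}$ is equivalent to the existence of a net $\net{F_i}_{i \in I}$ of non-empty finite subsets of $M$ satisfying this reversed zero-convergence condition for every $\mathfrak{g} \in G \quotient G_0$.

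Next I would apply Lemma~\ref{lem:characterisation-of-zero-convergent-net-that-depends-on-a-parameter} with $V = \set{F \subseteq M \suchthat F \neq \emptyset, F \text{ finite}}$, $W = G \quotient G_0$, and
\begin{align*}
  \Psi \from \set{F \subseteq M \suchthat F \neq \emptyset, F \text{ finite}} \times G \quotient G_0 &\to \R,\\
  (F, \mathfrak{g}) &\mapsto \frac{\abs{(\blank \rightsemiaction \mathfrak{g})^{-1}(F) \smallsetminus F}}{\abs{F}}.
\end{align*}
The left-hand side of that lemma's biconditional is exactly ``there is a net $\net{F_i}_{i \in I}$ with $\lim_{i \in I} \Psi(F_i, \mathfrak{g}) = 0$ for all $\mathfrak{g}$'', and its right-hand side is ``for every finite $E \subseteq G \quotient G_0$ and every $\varepsilon \in \R_{> 0}$ there is a non-empty finite $F \subseteq M$ with $\Psi(F, e) < \varepsilon$ for all $e \in E$'', i.e. with $\abs{(\blank \rightsemiaction e)^{-1}(F) \smallsetminus F} / \abs{F} < \varepsilon$ for all $e \in E$. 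Chaining this with the equivalence from the first paragraph yields the claim. This is the same pattern as the proof of Lemma~\ref{lem:epsilon-characterisation-of-folner-net}, with the forward difference sets $F \smallsetminus (\blank \rightsemiaction \mathfrak{g})^{-1}(F)$ replaced by the reversed ones $(\blank \rightsemiaction \mathfrak{g})^{-1}(F) \smallsetminus F$ and with the extra intermediate appeal to Lemma~\ref{lem:reversed-sets-characterisation-of-folner-nets}.

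I do not expect a genuine obstacle: all the real content --- the finiteness of $G_0$ together with the cardinality estimates of Lemma~\ref{lem:cardinality-of-inverse-image-of-liberation-minus-the-same-less-than-or-equal-to-whatever} --- is already absorbed into Lemma~\ref{lem:reversed-sets-characterisation-of-folner-nets}, while the net-extraction bookkeeping is handled abstractly by Lemma~\ref{lem:characterisation-of-zero-convergent-net-that-depends-on-a-parameter}. The only minor point is compatibility of index sets: the net produced by Lemma~\ref{lem:characterisation-of-zero-convergent-net-that-depends-on-a-parameter} is indexed by the directed set of pairs consisting of a finite subset of $G \quotient G_0$ and a positive real number, which is a legitimate directed index set, so it is indeed a net of non-empty finite subsets of $M$ in the sense required by the definition of right Følner net.
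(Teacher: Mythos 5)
Your proposal is correct and follows exactly the paper's own argument: the paper likewise derives this lemma as a direct consequence of Lemma~\ref{lem:reversed-sets-characterisation-of-folner-nets} combined with Lemma~\ref{lem:characterisation-of-zero-convergent-net-that-depends-on-a-parameter}, applied to the same map $\Psi(F, \mathfrak{g}) = \abs{(\blank \rightsemiaction \mathfrak{g})^{-1}(F) \smallsetminus F} / \abs{F}$. Your remarks on the index set and on where the finiteness of $G_0$ enters are accurate but not needed beyond what the cited lemmas already provide.
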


  \begin{proof}
    This is a direct consequence of Lemma~\ref{lem:reversed-sets-characterisation-of-folner-nets} and Lemma~\ref{lem:characterisation-of-zero-convergent-net-that-depends-on-a-parameter} with
    \begin{align*}
      \Psi \from \set{F \subseteq M \suchthat F \neq \emptyset, F \text{ finite}} \times G \quotient G_0 &\to     \R,\\
                                                                                       (F, \mathfrak{g}) &\mapsto \frac{\abs{(\blank \rightsemiaction \mathfrak{g})^{-1}(F) \smallsetminus F}}{\abs{F}}. \tag*{\qed}
    \end{align*}
  \end{proof}

  \section{Right Paradoxical Decompositions}
  \label{sec:right-paradoxical-decomposition}

  In this section, let $\mathcal{R} = \ntuple{\ntuple{M, G, \leftaction}, \ntuple{m_0, \family{g_{m_0, m}}_{m \in M}}}$ be a cell space.

  \begin{definition} 
            Let $A$ and $A'$ be two sets. The set $A \cup A'$ is denoted by $A \cupdot A'$ if and only if the sets $A$ and $A'$ are disjoint.
  \end{definition}

  \begin{definition}
  \label{def:right-paradoxical-decomposition}
    Let $E$ be a finite subset of $G \quotient G_0$, and let $\family{A_e}_{e \in E}$ and $\family{B_e}_{e \in E}$ be two families of subsets of $M$ indexed by $E$ such that, for each index $e \in E$, the map $\blank \rightsemiaction e$ is injective on $A_e$ and on $B_e$, and
    \begin{equation*}
      M = \bigcupdot_{e \in E} A_e
        = \bigcupdot_{e \in E} B_e
        = \parens*{\bigcupdot_{e \in E} A_e \rightsemiaction e} \cupdot \parens*{\bigcupdot_{e \in E} B_e \rightsemiaction e}.
    \end{equation*}
    The triple $\ntuple{N, \family{A_e}_{e \in E}, \family{B_e}_{e \in E}}$ is called \define{right paradoxical decomposition of $\mathcal{R}$}\graffito{right paradoxical decomposition $\ntuple{N, \family{A_e}_{e \in E}, \family{B_e}_{e \in E}}$ of $\mathcal{R}$}\index{paradoxical decomposition of $\mathcal{R}$!right}.
  \end{definition}

  \begin{remark} 
    In the situation of Remark~\ref{rem:groups:measureamact}, for each element $g \in G$, the map $\blank \rightsemiaction g$ is injective. Hence, right paradoxical decompositions of $\mathcal{R}$ are the same as right paradoxical decompositions of $G$ as defined in \cite[Definition~4.8.1]{ceccherini-silberstein:coornaert:2010}. 
  \end{remark}

  \begin{lemma}
  \label{lem:functional-right-paradoxical-decomposition}
    Let $G_0$ be finite and let $\ntuple{N, \family{A_e}_{e \in E}, \family{B_e}_{e \in E}}$ be a right paradoxical decomposition of $\mathcal{R}$. Then,
    \begin{equation*}
      \unityfnc_M = \sum_{e \in E} \unityfnc_{A_e}
                  = \sum_{e \in E} \unityfnc_{B_e}
                  = \sum_{e \in E} (\unityfnc_{A_e} \funcamact e) + \sum_{e \in E} (\unityfnc_{B_e} \funcamact e).
    \end{equation*}
  \end{lemma}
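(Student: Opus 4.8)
The statement is purely a matter of translating the set-theoretic conditions in Definition~\ref{def:right-paradoxical-decomposition} into the language of indicator functions, using the already-established Lemma~\ref{lem:indicator-function-of-A-liberation-n-is-indicator-function-of-A-acted-upon}. First I would record the general fact that for a partition $M = \bigcupdot_{e \in E} C_e$ one has $\unityfnc_M = \sum_{e \in E} \unityfnc_{C_e}$, since for each $m \in M$ exactly one index $e$ satisfies $m \in C_e$. Applying this to the two partitions $\family{A_e}_{e \in E}$ and $\family{B_e}_{e \in E}$ gives the first two equalities $\unityfnc_M = \sum_{e \in E} \unityfnc_{A_e} = \sum_{e \in E} \unityfnc_{B_e}$ immediately.

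For the third equality, the key point is that the family $\family{(A_e \rightsemiaction e) \cupdot (B_e \rightsemiaction e)}_{e \in E}$ is a partition of $M$, which is exactly the content of the last displayed equation in Definition~\ref{def:right-paradoxical-decomposition}. Spelling this out, the sets $A_e \rightsemiaction e$ and $B_e \rightsemiaction e$ (over all $e \in E$) together form a partition of $M$, so by the same indicator-function fact $\unityfnc_M = \sum_{e \in E} \unityfnc_{A_e \rightsemiaction e} + \sum_{e \in E} \unityfnc_{B_e \rightsemiaction e}$. Now, since by hypothesis the map $\blank \rightsemiaction e$ is injective on $A_e$ and on $B_e$ for every $e \in E$, Lemma~\ref{lem:indicator-function-of-A-liberation-n-is-indicator-function-of-A-acted-upon} (applicable because $G_0$ is finite) yields $\unityfnc_{A_e \rightsemiaction e} = \unityfnc_{A_e} \funcamact e$ and $\unityfnc_{B_e \rightsemiaction e} = \unityfnc_{B_e} \funcamact e$. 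Substituting these into the sum gives the third equality.

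There is essentially no obstacle here; the only thing to be a little careful about is that the third displayed condition in Definition~\ref{def:right-paradoxical-decomposition} asserts both that each $A_e \rightsemiaction e$ is disjoint from $B_e \rightsemiaction e$ (so the $\cupdot$ is legitimate) and that the resulting $2\abs{E}$ sets are pairwise disjoint with union $M$ — it is this full partition statement, not merely pairwise disjointness within each $e$, that licenses splitting $\unityfnc_M$ as a sum of $2\abs{E}$ indicator functions. Once that is observed, the proof is three lines of substitution, and I would write it as a single short chain of equalities.
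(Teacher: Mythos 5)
Your proposal is correct and is exactly the argument the paper intends: the paper's proof of this lemma consists of the single sentence that it is a direct consequence of Definition~\ref{def:right-paradoxical-decomposition} and Lemma~\ref{lem:indicator-function-of-A-liberation-n-is-indicator-function-of-A-acted-upon}, and you have simply spelled out the partition-to-indicator-function translation and the substitution $\unityfnc_{A_e \rightsemiaction e} = \unityfnc_{A_e} \funcamact e$ that this sentence leaves implicit.
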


  \begin{proof}
    This is a direct consequence of Definition~\ref{def:right-paradoxical-decomposition} and Lemma~\ref{lem:indicator-function-of-A-liberation-n-is-indicator-function-of-A-acted-upon}.
  \end{proof}

  \section{Tarski's and Følner's Theorem}
  \label{sec:tarski-folner-theorem}

  In this section, let $\mathcal{R} = \ntuple{\ntuple{M, G, \leftaction}, \ntuple{m_0, \family{g_{m_0, m}}_{m \in M}}}$ be a cell space such that the stabiliser $G_0$ of $m_0$ under $\leftaction$ is finite.

  \begin{lemma} 
  \label{lem:meanamact-is-continuous}
    Let $\mathfrak{g}$ be an element of $G \quotient G_0$. The map $\blank \meanamact \mathfrak{g}$ is continuous, where $\boundeds(M)^*$ is equipped with the weak-$*$ topology.
  \end{lemma}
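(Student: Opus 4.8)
The plan is to reduce continuity in the weak-$*$ topology to a pointwise statement. Recall that the weak-$*$ topology on $\boundeds(M)^*$ is the initial topology with respect to the evaluation maps $\ev_f \from \boundeds(M)^* \to \R$, $\psi \mapsto \psi(f)$, as $f$ ranges over $\boundeds(M)$. Hence a map $T \from \boundeds(M)^* \to \boundeds(M)^*$ is weak-$*$-to-weak-$*$ continuous if and only if, for every $f \in \boundeds(M)$, the composite $\ev_f \after T \from \boundeds(M)^* \to \R$ is weak-$*$ continuous. So the first step is to fix $f \in \boundeds(M)$ and analyse $\ev_f \after (\blank \meanamact \mathfrak{g})$.

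The second step is the computation $\parens[\big]{\ev_f \after (\blank \meanamact \mathfrak{g})}(\psi) = (\psi \meanamact \mathfrak{g})(f) = \psi(f \funcamact \mathfrak{g}) = \ev_{f \funcamact \mathfrak{g}}(\psi)$, which uses only Definition~\ref{def:kind-of-semi-action-of-quotient-on-boundeds-star} and the fact that, since $G_0$ is finite, $f \funcamact \mathfrak{g}$ is a well-defined element of $\boundeds(M)$ by Lemma~\ref{lem:kind-of-semi-action-of-quotient-on-boundeds}. Thus $\ev_f \after (\blank \meanamact \mathfrak{g}) = \ev_{f \funcamact \mathfrak{g}}$, which is one of the defining maps of the weak-$*$ topology and hence continuous. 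Since $f$ was arbitrary, the universal property of the initial topology yields that $\blank \meanamact \mathfrak{g}$ is weak-$*$-to-weak-$*$ continuous, completing the proof.

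I do not expect a genuine obstacle here; the lemma is essentially the observation that $\blank \meanamact \mathfrak{g}$ is the pre-adjoint-style transpose of the bounded linear operator $\blank \funcamact \mathfrak{g}$ on $\boundeds(M)$, and transposes of bounded operators are always weak-$*$ continuous. The only point requiring a little care is to invoke the finiteness of $G_0$ so that $\funcamact$ is defined in the first place (this is why the standing hypothesis of the section is in force), and to phrase the argument via the initial-topology characterisation rather than an $\varepsilon$-net argument, which keeps it short. Alternatively, one could argue directly with subbasic weak-$*$ neighbourhoods: given a subbasic open set $\set{\chi \suchthat \abs{\chi(f) - \chi_0(f)} < \varepsilon}$ around $\psi_0 \meanamact \mathfrak{g}$, its preimage under $\blank \meanamact \mathfrak{g}$ is $\set{\psi \suchthat \abs{\psi(f \funcamact \mathfrak{g}) - \psi_0(f \funcamact \mathfrak{g})} < \varepsilon}$, which is weak-$*$ open; but the initial-topology formulation is cleaner.
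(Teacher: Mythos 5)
Your proposal is correct and follows essentially the same route as the paper: both fix $f \in \boundeds(M)$, compute $\ev_f \after (\blank \meanamact \mathfrak{g}) = \ev_{f \funcamact \mathfrak{g}}$, and conclude by the initial-topology characterisation of the weak-$*$ topology. The additional remarks on well-definedness of $\funcamact$ and the transpose interpretation are accurate but not needed beyond what the paper already records.
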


  \begin{proof}
    For each $f \in \boundeds(M)$, let
      $\ev_f \from \boundeds(M)^* \to     \R,
                             \psi \mapsto \psi(f)$.
    Furthermore, let $f \in \boundeds(M)$. Then, for each $\psi \in \boundeds(M)^*$,
    \begin{equation*}
      (\ev_f \circ (\blank \meanamact \mathfrak{g}))(\psi)
      = \ev_f(\psi \meanamact \mathfrak{g})
      = (\psi \meanamact \mathfrak{g})(f)
      = \psi(f \funcamact \mathfrak{g})
      = \ev_{f \funcamact \mathfrak{g}}(\psi).
    \end{equation*}
    Thus, $\ev_f \circ (\blank \meanamact \mathfrak{g}) = \ev_{f \funcamact \mathfrak{g}}$. Hence, because $\ev_{f \funcamact \mathfrak{g}}$ is continuous, so is $\ev_f \circ (\blank \meanamact \mathfrak{g})$. Therefore, the map $\blank \meanamact \mathfrak{g}$ is continuous. \qed 
  \end{proof}

  \begin{lemma} 
  \label{lem:convergent-net-sufficient-for-amenability}
    Let $\net{\nu_i}_{i \in I}$ be a net in $\means(M)$ such that, for each element $\mathfrak{g} \in G \quotient G_0$, the net $\net{(\nu_i \meanamact \mathfrak{g}) - \nu_i}_{i \in I}$ converges to $\nullityfnc$ in $\boundeds(M)^*$ equipped with the weak-$*$ topology. The cell space $\mathcal{R}$ is right amenable.
  \end{lemma}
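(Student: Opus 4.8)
The plan is to use compactness of $\means(M)$ to extract a weak-$*$ convergent subnet of $\net{\nu_i}_{i \in I}$, show its limit is a $\meanamact$-invariant mean, and then invoke the mean characterisation of right amenability. By Theorem~\ref{thm:means-convex-and-compact-wrt-weak-star-topology}, the set $\means(M)$ is compact in $\boundeds(M)^*$ equipped with the weak-$*$ topology, so the net $\net{\nu_i}_{i \in I}$ has a subnet $\net{\nu_{i_j}}_{j \in J}$ converging to some $\nu \in \means(M)$.

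Next I would fix $\mathfrak{g} \in G \quotient G_0$ and pass to the limit along this subnet. By Lemma~\ref{lem:meanamact-is-continuous} the map $\blank \meanamact \mathfrak{g}$ is weak-$*$ continuous, so $\net{\nu_{i_j} \meanamact \mathfrak{g}}_{j \in J}$ converges to $\nu \meanamact \mathfrak{g}$. On the other hand, $\net{\nu_{i_j}}_{j \in J}$ converges to $\nu$, and a subnet of the convergent net $\net{(\nu_i \meanamact \mathfrak{g}) - \nu_i}_{i \in I}$ still converges to $\nullityfnc$. To combine these I would evaluate against a fixed $f \in \boundeds(M)$, avoiding any appeal to joint continuity of subtraction: the real net $(\nu_{i_j} \meanamact \mathfrak{g})(f) - \nu_{i_j}(f)$ converges to $0$, while $(\nu_{i_j} \meanamact \mathfrak{g})(f) \to (\nu \meanamact \mathfrak{g})(f)$ and $\nu_{i_j}(f) \to \nu(f)$ by definition of the weak-$*$ topology; since limits in $\R$ are unique, $(\nu \meanamact \mathfrak{g})(f) = \nu(f)$. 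As $f \in \boundeds(M)$ and $\mathfrak{g} \in G \quotient G_0$ were arbitrary, $\nu$ is a $\meanamact$-invariant mean on $M$.

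Finally, by Theorem~\ref{thm:mean-characterisation-of-right-amenable}, the existence of a $\meanamact$-invariant mean on $M$ implies that the cell space $\mathcal{R}$ is right amenable. The only genuinely delicate point is the handling of subnets together with the algebraic operation: one must be careful that the subnet of $\net{(\nu_i \meanamact \mathfrak{g}) - \nu_i}_{i \in I}$ indexed in the same way as the chosen subnet of $\net{\nu_i}_{i \in I}$ is indeed a subnet (so that it keeps the limit $\nullityfnc$), and that evaluation at $f$ turns the weak-$*$ convergences into ordinary convergences in $\R$ where uniqueness of limits applies; beyond that, the argument is a direct chain of the cited results.
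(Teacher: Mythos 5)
Your proposal is correct and follows essentially the same route as the paper: compactness of $\means(M)$ (Theorem~\ref{thm:means-convex-and-compact-wrt-weak-star-topology}) to extract a convergent subnet, weak-$*$ continuity of $\blank \meanamact \mathfrak{g}$ (Lemma~\ref{lem:meanamact-is-continuous}), uniqueness of limits to get $\nu \meanamact \mathfrak{g} = \nu$, and Theorem~\ref{thm:mean-characterisation-of-right-amenable} to conclude. Your only deviations are cosmetic and if anything slightly cleaner: you fix the subnet before quantifying over $\mathfrak{g}$ (so the limit mean $\nu$ visibly does not depend on $\mathfrak{g}$), and you test against each $f \in \boundeds(M)$ to reduce to uniqueness of limits in $\R$, where the paper instead invokes Hausdorffness of the weak-$*$ topology on $\boundeds(M)^*$.
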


  \begin{proof} 
    Let $\mathfrak{g} \in G \quotient G_0$. According to Theorem~\ref{thm:means-convex-and-compact-wrt-weak-star-topology}, the set $\means(M)$ is compact in $\boundeds(M)^*$ equipped with the weak-$*$ topology. Hence, there is a subnet $\net{\nu_{i_j}}_{j \in J}$ of $\net{\nu_i}_{i \in I}$ that converges to a $\nu \in \means(M)$. Because, according to Lemma~\ref{lem:meanamact-is-continuous}, the map $\blank \meanamact \mathfrak{g}$ is continuous, the net $\net{(\nu_{i_j} \meanamact \mathfrak{g}) - \nu_{i_j}}_{j \in J}$ converges to $(\nu \meanamact \mathfrak{g}) - \nu$ in $\boundeds(M)^*$. Because it is a subnet of $\net{(\nu_i \meanamact \mathfrak{g}) - \nu_i}_{i \in I}$, it also converges to $\nullityfnc$ in $\boundeds(M)^*$. Because the space $\boundeds(M)^*$ is Hausdorff, we have $(\nu \meanamact \mathfrak{g}) - \nu = \nullityfnc$ and hence $\nu \meanamact \mathfrak{g} = \nu$. Altogether, $\nu$ is a $\meanamact$-invariant mean. In conclusion, according to Theorem~\ref{thm:mean-characterisation-of-right-amenable}, the cell space $\mathcal{R}$ is right amenable. \qed
  \end{proof} 

  \begin{lemma} 
  \label{lem:double-liberation-with-sets-to-one-liberation}
    Let $m$ be an element of $M$, and let $E$ and $E'$ be two subsets of $G \quotient G_0$. There is a subset $E''$ of $G \quotient G_0$ such that $(m \rightsemiaction E) \rightsemiaction E' = m \rightsemiaction E''$; if $G_0 \in E \cap E'$, then $G_0 \in E''$; if $E$ and $E'$ are finite, then $\abs{E''} \leq \abs{E} \cdot \abs{E'}$; and if $G_0 \cdot E' \subseteq E'$, then
      $E'' = \set{g \cdot e' \suchthat e \in E, e' \in E', g \in e}$. 
  \end{lemma}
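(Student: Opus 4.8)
The plan is to reduce the statement about sets $E, E' \subseteq G \quotient G_0$ to the single-element case handled by Lemma~\ref{lem:double-liberation-with-sets-to-one-liberation}'s building block, namely Lemma~\ref{lem:rightsemiaction-can-be-undone}. First I would unfold the definition: $(m \rightsemiaction E) \rightsemiaction E' = \bigcup_{e \in E} \bigcup_{e' \in E'} (m \rightsemiaction e) \rightsemiaction e'$. For a fixed $e \in E$, apply Lemma~\ref{lem:rightsemiaction-can-be-undone} to the point $m$ and the coset $e$: there is an element $g_e \in e$ such that, for every $\mathfrak{g}' \in G \quotient G_0$, $(m \rightsemiaction e) \rightsemiaction \mathfrak{g}' = m \rightsemiaction g_e \cdot \mathfrak{g}'$. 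Hence $(m \rightsemiaction e) \rightsemiaction e' = m \rightsemiaction g_e \cdot e'$ for each $e' \in E'$. Taking $E'' = \set{g_e \cdot e' \suchthat e \in E, e' \in E'}$ then gives $(m \rightsemiaction E) \rightsemiaction E' = m \rightsemiaction E''$, which is the first claim. Note $g_e \cdot e'$ is a well-defined left coset since $g_e \in G$ and $e' \in G \quotient G_0$, so multiplication on the left makes sense.

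Next I would verify the three supplementary properties for this $E''$. For the cardinality bound, $E''$ is by construction the image of $E \times E'$ under $(e, e') \mapsto g_e \cdot e'$, so $\abs{E''} \leq \abs{E} \cdot \abs{E'}$ whenever $E$ and $E'$ are finite. For the claim that $G_0 \in E \cap E'$ implies $G_0 \in E''$: when $e = G_0$ we may invoke the defect axiom — actually more directly, Lemma~\ref{lem:rightsemiaction-can-be-undone} applied with $\mathfrak{g} = G_0$ yields $g_{G_0} \in G_0$, and since the first semi-action axiom gives $m \rightsemiaction G_0 = m$, we can in fact take $g_{G_0} = e_G$ (or any element of $G_0$); then with $e' = G_0$ we get $g_{G_0} \cdot G_0 = G_0 \in E''$. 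I should be slightly careful here: the element $g_e$ produced by Lemma~\ref{lem:rightsemiaction-can-be-undone} is only asserted to exist, not canonically chosen, so for the $G_0 \in E''$ claim I would choose $g_{G_0} \in G_0$ explicitly (which is consistent with the lemma since $(m \rightsemiaction G_0)\rightsemiaction \mathfrak{g}' = m \rightsemiaction \mathfrak{g}' = m \rightsemiaction g_0 \cdot \mathfrak{g}'$ need not hold for arbitrary $g_0 \in G_0$ — rather I want the specific $g_0$ from the defect axiom; let me instead simply note $G_0 = e_G G_0$ and that when $G_0 \in E$, the term $e = G_0$ contributes $(m \rightsemiaction G_0) \rightsemiaction e' = m \rightsemiaction e'$ for all $e'$, and in particular for $e' = G_0$ the value $m$ is realised, but to land $G_0$ itself in $E''$ I use that the defect element $g_0$ for $(m, g)$ with $g \in G_0$ can be taken in $G_0$).

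The last and most delicate claim is the explicit formula $E'' = \set{g \cdot e' \suchthat e \in E, e' \in E', g \in e}$ under the hypothesis $G_0 \cdot E' \subseteq E'$. The subtlety is that $E''$ as I constructed it uses only one representative $g_e$ per coset $e$, whereas the asserted formula ranges over all $g \in e$. The key point is that $G_0 \cdot E' \subseteq E'$ makes the set $\set{g \cdot e' \suchthat g \in e, e' \in E'}$ independent of the choice of representative: if $g, g' \in e$ then $g' = g g_0$ for some $g_0 \in G_0$, so $g' \cdot e' = g \cdot (g_0 \cdot e')$, and $g_0 \cdot e' \in G_0 \cdot E' \subseteq E'$, hence $\set{g' \cdot e' \suchthat e' \in E'} \subseteq \set{g \cdot e' \suchthat e' \in E'}$, and by symmetry the two are equal. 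Therefore $\set{g \cdot e' \suchthat g \in e, e' \in E'} = \set{g_e \cdot e' \suchthat e' \in E'}$ for the particular $g_e$ chosen above, and taking the union over $e \in E$ gives exactly $E'' = \set{g \cdot e' \suchthat e \in E, e' \in E', g \in e}$. I expect this representative-independence argument to be the main obstacle — it is where the hypothesis $G_0 \cdot E' \subseteq E'$ is genuinely used, and one must be careful that replacing $g_e$ by an arbitrary $g \in e$ in $m \rightsemiaction g \cdot e'$ does not change the \emph{value} of the semi-action (it does not, again by Lemma~\ref{lem:rightsemiaction-can-be-undone} combined with $G_0 \cdot e' \in \set{e'' : e'' \text{ a coset}}$ being absorbed into $E'$), so that the formula for $E''$ is consistent with the identity $(m \rightsemiaction E) \rightsemiaction E' = m \rightsemiaction E''$ already proved.
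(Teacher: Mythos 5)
Your proposal is correct and follows essentially the same route as the paper: apply Lemma~\ref{lem:rightsemiaction-can-be-undone} to each $e \in E$ to get a representative $g_e \in e$, set $E'' = \set{g_e \cdot e' \suchthat e \in E, e' \in E'}$, and read off the three supplementary claims (note that $G_0 \in E''$ follows immediately from $g_{G_0} \in G_0$, which the cited lemma already guarantees, so your hedging there is unnecessary). Your explicit verification that $G_0 \cdot E' \subseteq E'$ makes the formula representative-independent is exactly the detail the paper leaves implicit in ``then $E''$ is as stated.''
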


  \begin{proof}
    For each $e \in E$, according to Lemma~\ref{lem:rightsemiaction-can-be-undone}, there is a $g_e \in e$ such that 
    \begin{equation*}
      \ForEach \mathfrak{g} \in G \quotient G_0 \Holds (m \rightsemiaction e) \rightsemiaction \mathfrak{g} = m \rightsemiaction g_e \cdot \mathfrak{g}. 
    \end{equation*}
    Put $E'' = \set{g_e \cdot e' \suchthat e \in E, e' \in E'}$. Then, $(m \rightsemiaction E) \rightsemiaction E' = m \rightsemiaction E''$. Moreover, if $G_0 \in E \cap E'$, then $G_0 = g_{G_0} \cdot G_0 \in E''$; if $E$ and $E'$ are finite, then $\abs{E''} \leq \abs{E} \cdot \abs{E'}$; and if $G_0 \cdot E' \subseteq E'$, then $E''$ is as stated. \qed 
  \end{proof}

  \begin{main-theorem} 
  \label{thm:tarski-folner}
    Let $\mathcal{R} = \ntuple{\ntuple{M, G, \leftaction}, \ntuple{m_0, \family{g_{m_0, m}}_{m \in M}}}$ be a cell space such that the stabiliser $G_0$ of $m_0$ under $\leftaction$ is finite. The following statements are equivalent:
    \begin{enumerate}
      \item \label{it:tarski-folner:not-right-amenable}
            The cell space $\mathcal{R}$ is not right amenable;
      \item \label{it:tarski-folner:no-folner-net}
            There is no right Følner net in $\mathcal{R}$;
      \item \label{it:tarski-folner:finite-N}
            There is a finite subset $E$ of $G \quotient G_0$ such that $G_0 \in E$ and, for each finite subset $F$ of $M$, we have $\abs{F \rightsemiaction E} \geq 2 \abs{F}$;
      \item \label{it:tarski-folner:two-to-one-surjective-map}
            There is a $2$-to-$1$ surjective map $\phi \from M \to M$ and there is a finite subset $E$ of $G \quotient G_0$ such that
            \begin{equation*}
              \ForEach m \in M \Exists e \in E \SuchThat \phi(m) \rightsemiaction e = m;
            \end{equation*}
      \item \label{it:tarski-folner:no-paradoxical-decomposition}
            There is a right paradoxical decomposition of $\mathcal{R}$.
    \end{enumerate}
  \end{main-theorem}

  \begin{proof}
    \begin{description}
      \item[\ref{it:tarski-folner:not-right-amenable} implies \ref{it:tarski-folner:no-folner-net}.] 
            Let there be a right Følner net $\net{F_i}_{i \in I}$ in $\mathcal{R}$. Furthermore, let $i \in I$. Put
            \begin{align*}
              \nu_i \from \boundeds(M) &\to     \R,\\
                                     f &\mapsto \frac{1}{\abs{F_i}} \sum_{m \in F_i} f(m).
            \end{align*}
            Then, $\nu_i \in \means(M)$. 
            Moreover, let $\mathfrak{g} \in G \quotient G_0$ and let $f \in \boundeds(M)$. Then,
            \begin{align*}
              (\nu_i \meanamact \mathfrak{g})(f)
              &= \nu_i(f \funcamact \mathfrak{g})\\
              &= \frac{1}{\abs{F_i}} \sum_{m \in F_i} (f \funcamact \mathfrak{g})(m)\\
              &= \frac{1}{\abs{F_i}} \sum_{m \in F_i} \sum_{m' \in (\blank \rightsemiaction \mathfrak{g})^{-1}(m)} f(m')\\
              &= \frac{1}{\abs{F_i}} \sum_{m \in (\blank \rightsemiaction \mathfrak{g})^{-1}(F_i)} f(m).
            \end{align*}
            Hence,
            \begin{equation*}
              (\nu_i \meanamact \mathfrak{g} - \nu_i)(f)
              = \frac{1}{\abs{F_i}} \parens*{\sum_{m \in (\blank \rightsemiaction \mathfrak{g})^{-1}(F_i) \smallsetminus F_i} f(m)
                 - \sum_{m \in F_i \smallsetminus (\blank \rightsemiaction \mathfrak{g})^{-1}(F_i)} f(m)}.
            \end{equation*}
            Therefore,
            \begin{align*}
              \abs{(\nu_i \meanamact \mathfrak{g} - \nu_i)(f)}
              &\leq \frac{1}{\abs{F_i}} \parens*{\sum_{m \in (\blank \rightsemiaction \mathfrak{g})^{-1}(F_i) \smallsetminus F_i} \abs{f(m)}
                    + \sum_{m \in F_i \smallsetminus (\blank \rightsemiaction \mathfrak{g})^{-1}(F_i)} \abs{f(m)}}\\
              &\leq 
                    \parens*{\frac{\abs{(\blank \rightsemiaction \mathfrak{g})^{-1}(F_i) \smallsetminus F_i}}{\abs{F_i}}
                    + \frac{\abs{F_i \smallsetminus (\blank \rightsemiaction \mathfrak{g})^{-1}(F_i)}}{\abs{F_i}}} \cdot \norm{f}_\infty.
            \end{align*}
            According to Definition~\ref{def:right-folner-net} and Lemma~\ref{lem:reversed-sets-characterisation-of-folner-nets}, the nets $\net{\abs{(\blank \rightsemiaction \mathfrak{g})^{-1}(F_i) \smallsetminus F_i} / \abs{F_i}}_{i \in I}$ and $\net{\abs{F_i \smallsetminus (\blank \rightsemiaction \mathfrak{g})^{-1}(F_i)} / \abs{F_i}}_{i \in I}$ converge to $0$. Hence, so does $\net{\abs{(\nu_i \meanamact \mathfrak{g} - \nu_i)(f)}}_{i \in I}$. Thus, the net $\net{\nu_i \meanamact \mathfrak{g} - \nu_i}_{i \in I}$ converges to $\nullityfnc$ in $\boundeds(M)^*$ equipped with the weak-$*$ topology. Hence, according to Lemma~\ref{lem:convergent-net-sufficient-for-amenability}, the cell space $\mathcal{R}$ is right amenable. In conclusion, by contraposition, if $\mathcal{R}$ is not right amenable, then there is no right Følner net in $\mathcal{R}$.
      \item[\ref{it:tarski-folner:no-folner-net} implies \ref{it:tarski-folner:finite-N}.]
            Let there be no right Følner net in $\mathcal{R}$. According to Lemma~\ref{lem:epsilon-characterisation-of-folner-net}, there is a finite $E_1 \subseteq G \quotient G_0$ and an $\varepsilon \in \R_{> 0}$ such that, for each non-empty and finite $F \subseteq M$, there is an $e_F \in E_1$ such that 
            \begin{equation*}
              \frac{\abs{F \smallsetminus (\blank \rightsemiaction e_F)^{-1}(F)}}{\abs{F}} \geq \varepsilon.
            \end{equation*}
            Put $E_2 = \set{G_0} \cup E_1$.

              Let $F \subseteq M$ be non-empty and finite. Then, $F \subseteq F \cup (F \rightsemiaction E_1) = F \rightsemiaction E_2$. Thus,
              \begin{align*}
                \abs{F \rightsemiaction E_2} - \abs{F}
                &=    \abs{(F \rightsemiaction E_2) \smallsetminus F}\\
                &=    \abs{(F \rightsemiaction E_1) \smallsetminus F}\\
                &\geq \abs{(F \rightsemiaction e_F) \smallsetminus F}.
              \end{align*}
              Moreover, according to Lemma~\ref{lem:liberation-preimage}, we have $\abs{(\blank \rightsemiaction e_F)^{-1}((F \rightsemiaction e_F) \smallsetminus F)} \leq \abs{G_0} \cdot \abs{(F \rightsemiaction e_F) \smallsetminus F}$. Hence,
              \begin{equation*}
                \abs{F \rightsemiaction E_2} - \abs{F}
                \geq \frac{\abs{(\blank \rightsemiaction e_F)^{-1}((F \rightsemiaction e_F) \smallsetminus F)}}{\abs{G_0}}.
              \end{equation*}
              Therefore, because $F \smallsetminus (\blank \rightsemiaction e_F)^{-1}(F) \subseteq (\blank \rightsemiaction e_F)^{-1}((F \rightsemiaction e_F) \smallsetminus F)$, 
              \begin{align*}
                \abs{F \rightsemiaction E_2} - \abs{F}
                &\geq \frac{\abs{F \smallsetminus (\blank \rightsemiaction e_F)^{-1}(F)}}{\abs{G_0}}\\
                &\geq \frac{\varepsilon}{\abs{G_0}} \abs{F}.
              \end{align*}
              Put $\xi = 1 + \varepsilon / \abs{G_0}$. Then, $\abs{F \rightsemiaction E_2} \geq \xi \abs{F}$. Because $\varepsilon$ does not depend on $F$, neither does $\xi$. Therefore, for each non-empty and finite $F \subseteq M$, we have $\abs{F \rightsemiaction E_2} \geq \xi \abs{F}$.

            Let $F \subseteq M$ be non-empty and finite. Because $\xi > 1$, there is an $n \in \N$ such that $\xi^n \geq 2$. Hence,
            \begin{align*} 
              \abs{\underbrace{(((F \rightsemiaction E_2) \rightsemiaction \dotsb) \rightsemiaction E_2) \rightsemiaction E_2}_{\text{$n$ times}}}
              &\geq \xi \abs{((F \rightsemiaction E_2) \rightsemiaction \dotsb) \rightsemiaction E_2}\\
              &\geq \dotsb\\
              &\geq \xi^n \abs{F}\\
              &\geq 2 \abs{F}.
            \end{align*}
            Moreover, according to Lemma~\ref{lem:double-liberation-with-sets-to-one-liberation}, there is an $E \subseteq G \quotient G_0$ such that $E$ is finite, $G_0 \in E$, and $F \rightsemiaction E = (((F \rightsemiaction E_2) \rightsemiaction \dotsb) \rightsemiaction E_2) \rightsemiaction E_2$. In conclusion, $\abs{F \rightsemiaction E} \geq 2 \abs{F}$.
      \item[\ref{it:tarski-folner:finite-N} implies \ref{it:tarski-folner:two-to-one-surjective-map} (see Fig.~\ref{fig:finite-N-implies-two-to-one-surjective-map}).] 
            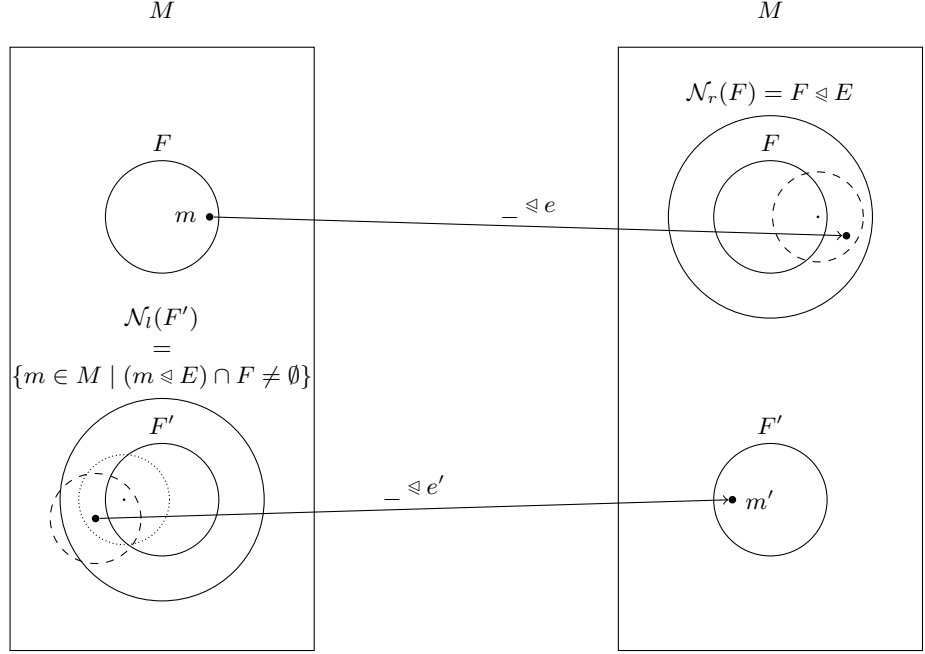
\begin{figure}[bt]
              \centering
              \begin{tikzpicture} 
                \pgfmathsetmacro\w{4} 
                \pgfmathsetmacro\cox{8} 

                \draw (0, 0) rectangle (\w, 8);
                \draw (\cox, 0) rectangle (\cox + \w, 8);

                \node (M) at (\w / 2, 8.5) {$M$};
                \node (M) at (\cox + \w / 2, 8.5) {$M$};

                \node[fill, circle, inner sep = 1pt, label = left: $m$] (m) at (\w - 1.375, 5.75) {}; 
                \node[draw, circle, inner sep = 15pt, label = above: $F$] (F) at (2, 5.75) {};
                \node[draw, circle, inner sep = 15pt, label = above: $F$] (Fx) at (\cox + 2, 5.75) {};

                \node[draw, circle, inner sep = 27pt, label = {above: $\set{m \in M \suchthat (m \rightsemiaction E) \cap F \neq \emptyset}$}] (Nl) at (2, 2) {}; 
                \node[inner sep = 51pt, label = {above: $=$}] (Nl) at (2, 2) {}; 
                \node[inner sep = 60pt, label = {above: $\mathcal{N}_l(F')$}] (Nl) at (2, 2) {}; 

                \node[fill, circle, inner sep = 0.25pt] (mxx) at (\cox + \w - 1.375, 5.75) {};
                \node[draw, circle, inner sep = 12pt, dashed] (m lib N) at (\cox + \w - 1.375, 5.75) {}; 
                \node[fill, circle, inner sep = 1pt] (m lib n) at (\cox + 3, 5.5) {}; 
                \node[draw, circle, inner sep = 27pt, label = {above: $\mathcal{N}_r(F) = F \rightsemiaction E$}] (Nr) at (\cox + 2, 5.75) {};
                \node[fill, circle, inner sep = 1pt, label = right: $m'$] (m') at (\cox + 1.5, 2) {};
                \node[draw, circle, inner sep = 15pt, label = above: $F'$] (F') at (\cox + 2, 2) {};
                \node[draw, circle, inner sep = 15pt, label = above: $F'$] (F'x) at (2, 2) {};
                \node[fill, circle, inner sep = 0.25pt] (m'xx) at (1.5, 2) {};
                \node[draw, circle, inner sep = 12pt, densely dotted] (m lib N) at (1.5, 2) {}; 
                \node[fill, circle, inner sep = 1pt] (lib n inv m') at (1.125, 1.75) {};
                \node[draw, circle, inner sep = 12pt, dashed] (lib n inv m' lib N) at (1.125, 1.75) {};

                \draw[->] (m) edge node[above] {$\blank \rightsemiaction e$} (m lib n)
                          (lib n inv m') edge node[above] {$\blank \rightsemiaction e'$} (m');
              \end{tikzpicture}
              \caption{
                Schematic representation of the set-up of the proof of Theorem~\ref{thm:tarski-folner}, Item~\ref{it:tarski-folner:finite-N} implies Item~\ref{it:tarski-folner:two-to-one-surjective-map}: Each region enclosed by one of the two rectangles is $M$; the regions enclosed by the smaller circles with solid borders are subsets $F$ and $F'$ of $M$ respectively; the regions enclosed by the circles with dashed borders are $m \rightsemiaction E$ and $m' \rightsemiaction E$ respectively; the region enclosed by the circle with dotted border is $\bigcup_{e' \in E} (\blank \rightsemiaction e')^{-1}(m')$; the regions enclosed by the larger circles are $\mathcal{N}_r(F)$ and $\mathcal{N}_l(F')$ respectively.
              }
              \label{fig:finite-N-implies-two-to-one-surjective-map}
            \end{figure}
            Let there be a finite $E \subseteq G \quotient G_0$ such that, for each finite $F \subseteq M$, we have $\abs{F \rightsemiaction E} \geq 2 \abs{F}$. Furthermore, let $\mathcal{G}$ be the bipartite graph 
            \begin{equation*}
              \ntuple{M, M, \set{(m, m') \in M \times M \suchthat \Exists e \in E \SuchThat m \rightsemiaction e = m'}}.
            \end{equation*}
            Moreover, let $F \subseteq M$ be finite. The right neighbourhood of $F$ in $\mathcal{G}$ is
            \begin{equation*}
              \mathcal{N}_r(F)
              = \set{m' \in M \suchthat \Exists e \in E \SuchThat F \rightsemiaction e \ni m'}
              = F \rightsemiaction E
            \end{equation*}
            and the left neighbourhood of $F$ in $\mathcal{G}$ is
            \begin{equation*}
              \mathcal{N}_l(F)
              = \set{m \in M \suchthat \Exists e \in E \SuchThat m \rightsemiaction e \in F}
              = \bigcup_{e \in E} (\blank \rightsemiaction e)^{-1}(F).
            \end{equation*}
            By precondition $\abs{\mathcal{N}_r(F)} = \abs{F \rightsemiaction E} \geq 2 \abs{F}$. Moreover, because $G_0 \in E$,
            we have $F = (\blank \rightsemiaction G_0)^{-1}(F) \subseteq \mathcal{N}_l(F)$ and hence $\abs{\mathcal{N}_l(F)} \geq \abs{F} \geq 2^{-1} \abs{F}$. Therefore, according to
            the Hall harem theorem, there is a perfect $(1,2)$-matching for $\mathcal{G}$. In conclusion, there is a $2$-to-$1$ surjective map $\phi \from M \to M$ such that, for each $m \in M$, we have $(\phi(m), m) \in E$, that is, there is an $e \in E$ such that $\phi(m) \rightsemiaction e = m$.
      \item[\ref{it:tarski-folner:two-to-one-surjective-map} implies \ref{it:tarski-folner:no-paradoxical-decomposition} (see Fig.~\ref{fig:two-to-one-surjective-map-implies-tarski-folner:no-paradoxical-decomposition}).] 
            \begin{figure}[bt]
              \centering
              \begin{tikzpicture}
                \pgfmathsetmacro\w{3.5} 
                \pgfmathsetmacro\cox{7} 

                \foreach \x in {0} {
                  \draw (\x, 0) rectangle (\x + \w, 8);
                  \foreach \y in {1, 2, ..., 7} {
                    \draw (\x, \y) -- (\x + \w, \y);
                  }
                }
                \foreach \x in {\cox} {
                  \draw (\x, 0) rectangle (\x + \w, 8);
                  \foreach \y in {2, 4, 6} {
                    \draw (\x, \y) -- (\x + \w, \y);
                  }
                }
                \foreach \x in {\cox + 0.125} {
                  \draw[gray, dashed] (\x, -0.125) rectangle (\x + \w, 8 - 0.125);
                  \foreach \y in {1.875, 3.875, 5.875} {
                    \draw[gray, dashed] (\x, \y) -- (\x + \w, \y);
                  }
                }
                \draw[very thick] (0, 4) -- (\w, 4);

                \node (M) at (\w / 2, 8.5) {$M$};
                \node (M) at (\cox + \w / 2, 8.5) {$M$};
                \node (psi M) at (-1, 6) {$\psi(M)$};
                \node (psi' M) at (-1, 2) {$\psi'(M)$};

                \node[fill, circle, inner sep = 1pt] (psi m) at (\w - 0.5, 6.5) {};
                \node (psi A n) at (1.45, 6.5) {$\psi(A_e) = A_e \rightsemiaction e$};
                \node[fill, circle, inner sep = 1pt] (psi' m) at (\w - 0.5, 2.5) {};
                \node (psi' B n') at (1.65, 2.5) {$\psi'(B_{e'}) = B_{e'} \rightsemiaction e'$};
                \node[fill, circle, inner sep = 1pt, label = right: $m$] (m) at (\cox + 0.5, 5) {}; 
                \node (A n) at (\cox + \w / 2, 5) {$A_e$};
                \node[gray] (B n') at (\cox + \w /2 + 1.25, 4.5) {$B_{e'}$};

                \draw[->] (psi m) edge[out = -60, in = -180] node[above] {$\phi$} (m)
                          (m) edge[out = 130, in = 10] node[above] {$\psi$} (psi m)
                          (m) edge[out = 70, in = 70] node[above] {$\blank \rightsemiaction e$} (psi m)
                          (psi' m) edge[out = 60, in = -180] node[below] {$\phi$} (m)
                          (m) edge[out = -130, in = -10] node[below] {$\psi'$} (psi' m)
                          (m) edge[out = -70, in = -70] node[below] {$\blank \rightsemiaction e'$} (psi' m);
              \end{tikzpicture}
              \caption{
                Schematic representation of the set-up of the proof of Theorem~\ref{thm:tarski-folner}, Item~\ref{it:tarski-folner:two-to-one-surjective-map} implies Item~\ref{it:tarski-folner:no-paradoxical-decomposition}: Each region enclosed by one of the three columns, two with solid and one with dashed border, is $M$; the dot in the right column, called $m$, is an element of $M$, and the two dots in the left column are its preimages under $\phi$, which are its images under $\psi$ and $\psi'$; there are elements $e$ and $e'$ of $E$ such that $m \rightsemiaction e = \psi(m)$ and $m \rightsemiaction e' = \psi'(m)$, in other words, $m \in A_e$ and $m \in B_{e'}$; as depicted in the right columns with solid and dashed borders, the families $\family{A_e}_{e \in E}$ and $\family{B_{e'}}_{e' \in E}$ are partitions of $M$; as depicted in the left column, the set $\set{\psi(M), \psi'(M)}$ is a partition of $M$, the family $\family{\psi(A_e)}_{e \in E} = \family{A_e \rightsemiaction e}_{e \in E}$ is a partition of $\psi(M)$, and the family $\family{\psi'(B_{e'})}_{e' \in E} = \family{B_{e'} \rightsemiaction e'}_{e' \in E}$ is a partition of $\psi'(M)$.
              }
              \label{fig:two-to-one-surjective-map-implies-tarski-folner:no-paradoxical-decomposition}
            \end{figure}
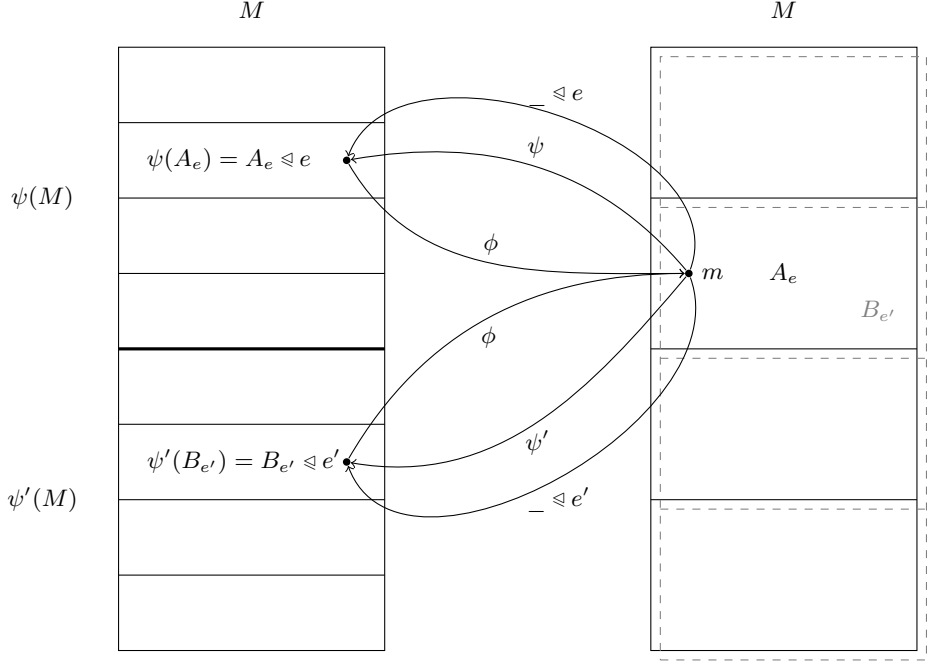
            Let there be a $2$-to-$1$ surjective map $\phi \from M \to M$ and a finite subset $E$ of $G \quotient G_0$ such that
            \begin{equation*}
              \ForEach m \in M \Exists e \in E \SuchThat \phi(m) \rightsemiaction e = m.
            \end{equation*}
            By the axiom of choice, there are two injective maps $\psi$ and $\psi' \from M \to M$ such that, for each $m \in M$, we have $\phi^{-1}(m) = \set{\psi(m), \psi'(m)}$. For each $e \in E$, let
            \begin{equation*}
              A_e = \set{m \in M \suchthat m \rightsemiaction e = \psi(m)}
            \quad\text{and}\quad
              B_e = \set{m \in M \suchthat m \rightsemiaction e = \psi'(m)}.
            \end{equation*}
            Let $m \in M$. There is an $e \in E$ such that $\phi(\psi(m)) \rightsemiaction e = \psi(m)$. Because $\phi(\psi(m)) = m$, we have $m \in A_e$. And, because $\rightsemiaction$ is free, for each $e' \in E \smallsetminus \set{e}$, we have $m \rightsemiaction e' \neq m \rightsemiaction e = \psi(m)$ and thus $m \notin A_{e'}$. Therefore,
            \begin{equation*}
              M = \bigcupdot_{e \in E} A_e
            \quad\text{and analogously}\quad
              M = \bigcupdot_{e \in E} B_e.
            \end{equation*}
            Moreover, $\psi(A_e) = A_e \rightsemiaction e$ and $\psi'(B_e) = B_e \rightsemiaction e$. Hence, because $M = \psi(M) \cupdot \psi'(M)$, and $\psi$ and $\psi'$ are injective,
            \begin{equation*}
              M 
                = \parens*{\bigcupdot_{e \in E} \psi(A_e)} \cupdot \parens*{\bigcupdot_{e \in E} \psi'(B_e)}
                = \parens*{\bigcupdot_{e \in E} A_e \rightsemiaction e} \cupdot \parens*{\bigcupdot_{e \in E} B_e \rightsemiaction e}.
            \end{equation*}
            Furthermore, because $\psi$ and $\psi'$ are injective, for each $e \in E$, the maps $(\blank \rightsemiaction e)\restriction_{A_e} = \psi\restriction_{A_e}$ and $(\blank \rightsemiaction e)\restriction_{B_e} = \psi'\restriction_{B_e}$ are injective. In conclusion, $\ntuple{N, \family{A_e}_{e \in E}, \family{B_e}_{e \in E}}$ is a right paradoxical decomposition of $\mathcal{R}$.
      \item[\ref{it:tarski-folner:no-paradoxical-decomposition} implies \ref{it:tarski-folner:not-right-amenable}.]
            Let there be a right paradoxical decomposition $\ntuple{N, \family{A_e}_{e \in E}, \family{B_e}_{e \in E}}$ of $\mathcal{R}$. According to Lemma~\ref{lem:functional-right-paradoxical-decomposition},
            \begin{equation*}
              \unityfnc_M = \sum_{e \in E} \unityfnc_{A_e}
                          = \sum_{e \in E} \unityfnc_{B_e}
                          = \sum_{e \in E} (\unityfnc_{A_e} \funcamact e) + \sum_{e \in E} (\unityfnc_{B_e} \funcamact e).
            \end{equation*}
            Suppose that $\mathcal{R}$ is right amenable. Then, according to Theorem~\ref{thm:mean-characterisation-of-right-amenable}, there is a $\meanamact$-invariant mean $\nu$ on $M$. Because $\nu$ is linear and normalised,
            \begin{align*}
              1 &= \nu(\unityfnc_M)\\
                &= \sum_{e \in E} \nu(\unityfnc_{A_e} \funcamact e) + \sum_{e \in E} \nu(\unityfnc_{B_e} \funcamact e)\\
                &= \sum_{e \in E} (\nu \meanamact e)(\unityfnc_{A_e}) + \sum_{e \in E} (\nu \meanamact e)(\unityfnc_{B_e})\\
                &= \sum_{e \in E} \nu(\unityfnc_{A_e}) + \sum_{e \in E} \nu(\unityfnc_{B_e})\\
                &= \nu(\unityfnc_M) + \nu(\unityfnc_M)\\
                &= 1 + 1\\
                &= 2,
            \end{align*}
            which contradicts that $1 \neq 2$. In conclusion, $\mathcal{R}$ is not right amenable. \qed 
    \end{description}
  \end{proof}

  \begin{corollary}[Tarski alternative theorem; Alfred Tarski, 1938] 
  \label{cor:tarski}
    Let $\mathcal{M}$ be a left homogeneous space with finite stabilisers. It is right amenable if and only if there is a coordinate system $\mathcal{K}$ for $\mathcal{M}$ such that there is no right paradoxical decomposition of $\ntuple{\mathcal{M}, \mathcal{K}}$. \qed
  \end{corollary}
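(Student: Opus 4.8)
The plan is to obtain the corollary as an immediate consequence of the Main Theorem by unwinding Definition~\ref{def:right-amenable}. First I would observe that, since $\leftaction$ is transitive, all point stabilisers of $\mathcal{M}$ are conjugate in $G$, so they are either all finite or all infinite; hence the hypothesis that $\mathcal{M}$ has finite stabilisers says precisely that for \emph{every} coordinate system $\mathcal{K} = \ntuple{m_0, \family{g_{m_0, m}}_{m \in M}}$ for $\mathcal{M}$ the stabiliser $G_0 = G_{m_0}$ is finite, so that the cell space $\ntuple{\mathcal{M}, \mathcal{K}}$ satisfies the standing hypothesis of the Main Theorem.

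Next I would extract from the Main Theorem the equivalence of Item~\ref{it:tarski-folner:not-right-amenable} and Item~\ref{it:tarski-folner:no-paradoxical-decomposition}: for a cell space whose stabiliser is finite, being not right amenable is equivalent to the existence of a right paradoxical decomposition. Contraposing, such a cell space is right amenable if and only if it admits no right paradoxical decomposition.

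Finally I would combine this with Definition~\ref{def:right-amenable}, according to which $\mathcal{M}$ is right amenable if and only if there is a coordinate system $\mathcal{K}$ for $\mathcal{M}$ such that $\ntuple{\mathcal{M}, \mathcal{K}}$ is right amenable. Substituting the characterisation from the previous step (applicable because, by the first step, $\ntuple{\mathcal{M}, \mathcal{K}}$ has finite $G_0$ for every $\mathcal{K}$) yields: $\mathcal{M}$ is right amenable if and only if there is a coordinate system $\mathcal{K}$ for $\mathcal{M}$ such that $\ntuple{\mathcal{M}, \mathcal{K}}$ admits no right paradoxical decomposition. This is exactly the asserted equivalence. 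I do not expect any genuine obstacle here: the statement is essentially a repackaging of the Main Theorem, and the only point meriting a word of care is that the finite-stabiliser hypothesis on $\mathcal{M}$ transfers to the cell space built from whichever coordinate system is produced (in the forward direction) or assumed (in the backward direction), so that the Main Theorem is legitimately applied in both directions.
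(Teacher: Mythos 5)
Your proposal is correct and is exactly the argument the paper intends: the corollary is stated with a bare \qed because it follows immediately from contraposing the equivalence of Items~\ref{it:tarski-folner:not-right-amenable} and~\ref{it:tarski-folner:no-paradoxical-decomposition} of the Main Theorem and unwinding Definition~\ref{def:right-amenable}. Your extra remark that conjugacy of stabilisers makes the finiteness hypothesis apply to every choice of coordinate system is a sensible precaution but not a departure from the paper's route.
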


  \begin{corollary}[Theorem of Følner; Erling Følner, 1955]
  \label{cor:folner}
    Let $\mathcal{M}$ be a left homogeneous space with finite stabilisers. It is right amenable if and only if there is a coordinate system $\mathcal{K}$ for $\mathcal{M}$ such that there is a right Følner net in $\ntuple{\mathcal{M}, \mathcal{K}}$. \qed
  \end{corollary}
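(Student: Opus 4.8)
The plan is to establish the cycle of implications $\ref{it:tarski-folner:not-right-amenable} \Rightarrow \ref{it:tarski-folner:no-folner-net} \Rightarrow \ref{it:tarski-folner:finite-N} \Rightarrow \ref{it:tarski-folner:two-to-one-surjective-map} \Rightarrow \ref{it:tarski-folner:no-paradoxical-decomposition} \Rightarrow \ref{it:tarski-folner:not-right-amenable}$, treating the first and last links by contraposition. For $\ref{it:tarski-folner:not-right-amenable} \Rightarrow \ref{it:tarski-folner:no-folner-net}$ I would show that a right Følner net $\net{F_i}_{i \in I}$ forces right amenability: from each $F_i$ build the averaging mean $\nu_i \from f \mapsto \abs{F_i}^{-1} \sum_{m \in F_i} f(m) \in \means(M)$, rewrite $(\nu_i \meanamact \mathfrak{g} - \nu_i)(f)$ as a difference of sums over $(\blank \rightsemiaction \mathfrak{g})^{-1}(F_i) \smallsetminus F_i$ and $F_i \smallsetminus (\blank \rightsemiaction \mathfrak{g})^{-1}(F_i)$, bound its modulus by a constant times $\norm{f}_\infty$ with the two Følner quotients as the constant, and invoke Definition~\ref{def:right-folner-net} together with Lemma~\ref{lem:reversed-sets-characterisation-of-folner-nets} to see that $\net{\nu_i \meanamact \mathfrak{g} - \nu_i}_{i \in I}$ converges weak-$*$ to $\nullityfnc$; Lemma~\ref{lem:convergent-net-sufficient-for-amenability} then concludes.

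For $\ref{it:tarski-folner:no-folner-net} \Rightarrow \ref{it:tarski-folner:finite-N}$, negating the Følner property through Lemma~\ref{lem:epsilon-characterisation-of-folner-net} produces a finite $E_1 \subseteq G \quotient G_0$ and an $\varepsilon \in \R_{> 0}$ such that every non-empty finite $F$ fails $e_F$-invariance by at least $\varepsilon$ for some $e_F \in E_1$. Setting $E_2 = \set{G_0} \cup E_1$ and combining $F \subseteq F \rightsemiaction E_2$ with the fibre bound of Lemma~\ref{lem:liberation-preimage} yields a uniform expansion $\abs{F \rightsemiaction E_2} \geq \xi \abs{F}$ with $\xi = 1 + \varepsilon / \abs{G_0} > 1$; iterating $n$ times with $\xi^n \geq 2$ and collapsing the iterated semi-action to one finite $E \ni G_0$ via Lemma~\ref{lem:double-liberation-with-sets-to-one-liberation} gives $\abs{F \rightsemiaction E} \geq 2 \abs{F}$ for all non-empty finite $F$. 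The step $\ref{it:tarski-folner:finite-N} \Rightarrow \ref{it:tarski-folner:two-to-one-surjective-map}$ is the combinatorial core: form the bipartite graph on two copies of $M$ with an edge from $m$ on the left to $m'$ on the right whenever $m \rightsemiaction e = m'$ for some $e \in E$, which is locally finite because by Lemma~\ref{lem:liberation-preimage} each fibre of $\blank \rightsemiaction e$ has at most $\abs{G_0}$ elements; for finite $F$ its right neighbourhood $F \rightsemiaction E$ has at least $2 \abs{F}$ elements by hypothesis, and, since $G_0 \in E$, its left neighbourhood contains $F$ and so has at least $\abs{F} \geq \tfrac{1}{2} \abs{F}$ elements. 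Hence the Hall harem theorem supplies a perfect $(1, 2)$-matching, and reading off the unique left partner of each right vertex defines a $2$-to-$1$ surjection $\phi \from M \to M$ with $\phi(m) \rightsemiaction e = m$ for a suitable $e \in E$.

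For $\ref{it:tarski-folner:two-to-one-surjective-map} \Rightarrow \ref{it:tarski-folner:no-paradoxical-decomposition}$, the axiom of choice splits $\phi^{-1}(m) = \set{\psi(m), \psi'(m)}$ into two injections $\psi, \psi' \from M \to M$; setting $A_e = \set{m \in M \suchthat m \rightsemiaction e = \psi(m)}$ and $B_e = \set{m \in M \suchthat m \rightsemiaction e = \psi'(m)}$, freeness of $\rightsemiaction$ makes $\family{A_e}_{e \in E}$ and $\family{B_e}_{e \in E}$ partitions of $M$ on which $\blank \rightsemiaction e$ is injective, while $\psi(A_e) = A_e \rightsemiaction e$, $\psi'(B_e) = B_e \rightsemiaction e$, the disjointness $M = \psi(M) \cupdot \psi'(M)$, and injectivity of $\psi, \psi'$ assemble $\family{A_e \rightsemiaction e}_{e \in E}$ and $\family{B_e \rightsemiaction e}_{e \in E}$ into a single partition of $M$. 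Finally $\ref{it:tarski-folner:no-paradoxical-decomposition} \Rightarrow \ref{it:tarski-folner:not-right-amenable}$ follows from Lemma~\ref{lem:functional-right-paradoxical-decomposition}: if $\mathcal{R}$ were right amenable, Theorem~\ref{thm:mean-characterisation-of-right-amenable} would give a $\meanamact$-invariant mean $\nu$, and applying it to $\unityfnc_M = \sum_{e \in E}(\unityfnc_{A_e} \funcamact e) + \sum_{e \in E}(\unityfnc_{B_e} \funcamact e)$, then using $\meanamact$-invariance and $\unityfnc_M = \sum_{e \in E} \unityfnc_{A_e} = \sum_{e \in E} \unityfnc_{B_e}$, would force $1 = 2$.

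I expect the principal difficulty to lie in $\ref{it:tarski-folner:finite-N} \Rightarrow \ref{it:tarski-folner:two-to-one-surjective-map}$, namely arranging the bipartite graph so that local finiteness and both harem inequalities hold simultaneously, and, secondarily, in the bookkeeping of $\ref{it:tarski-folner:no-folner-net} \Rightarrow \ref{it:tarski-folner:finite-N}$ that replaces the iterated semi-action $(\cdots(F \rightsemiaction E_2) \rightsemiaction \cdots) \rightsemiaction E_2$ by one defect set $E$ containing $G_0$.
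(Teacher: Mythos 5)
Your proposal is correct and follows essentially the same route as the paper: the paper obtains this corollary immediately from the equivalence of items~\ref{it:tarski-folner:not-right-amenable} and~\ref{it:tarski-folner:no-folner-net} of Main Theorem~\ref{thm:tarski-folner} (quantified over coordinate systems), and your five-step cycle of implications is precisely the paper's proof of that Main Theorem. The only remark is that for this particular corollary you only need the link between right amenability and the existence of a right F\o lner net, so the detour through items~\ref{it:tarski-folner:finite-N}--\ref{it:tarski-folner:no-paradoxical-decomposition} is more than is strictly required, though it is exactly how the paper organises the material.
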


  \begin{remark}
    In the situation of Remark~\ref{rem:groups:measureamact}, Corollaries~\ref{cor:tarski} and~\ref{cor:folner} constitute \cite[Theorem~4.9.1]{ceccherini-silberstein:coornaert:2010}.
  \end{remark}


  \section{From Left to Right Amenability}
  \label{sec:left-vs-right}

  \begin{lemma}
  \label{lem:general-sufficient-condition-for-left-implies-right-amenability}
    Let $\mathcal{R} = \ntuple{\ntuple{M, G, \leftaction}, \ntuple{m_0, \family{g_{m_0, m}}_{m \in M}}}$ be a cell space and let $H$ be a subgroup of $G$ such that, for each element $\mathfrak{g} \in G \quotient G_0$, there is an element $h \in H$ such that the maps $\blank \rightsemiaction \mathfrak{g}$ and $h \leftaction \blank$ are inverse to each other.
    If $\ntuple{M, H, \leftaction\restriction_{H \times M}}$ is left amenable, then $\mathcal{R}$ is right amenable.
  \end{lemma}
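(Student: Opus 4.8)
The plan is to show that the same finitely additive probability measure that witnesses left amenability of $\ntuple{M, H, \leftaction\restriction_{H \times M}}$ also witnesses right amenability of $\mathcal{R}$, using the coordinate system already fixed in $\mathcal{R}$, so that no new coordinate system need be chosen.

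First I would unfold the hypotheses. Left amenability of $\ntuple{M, H, \leftaction\restriction_{H \times M}}$ provides a $\measureamactleft$-invariant finitely additive probability measure $\mu$ on $M$, that is, $\mu(h^{-1} \leftaction A) = \mu(A)$ for every $h \in H$ and every $A \subseteq M$; since $H$ is a group, $h$ may be replaced by $h^{-1}$, so equivalently $\mu(h \leftaction A) = \mu(A)$ for all $h \in H$ and $A \subseteq M$. Next, fix $\mathfrak{g} \in G \quotient G_0$ and a subset $A$ of $M$ on which $\blank \rightsemiaction \mathfrak{g}$ is injective. By hypothesis there is $h \in H$ such that $\blank \rightsemiaction \mathfrak{g}$ and $h \leftaction \blank$ are inverse bijections of $M$; hence $A \rightsemiaction \mathfrak{g}$, being the image of $A$ under $\blank \rightsemiaction \mathfrak{g} = (h \leftaction \blank)^{-1}$, is the preimage of $A$ under $h \leftaction \blank$, namely $h^{-1} \leftaction A$. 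Therefore
\begin{equation*}
  (\mu \measureamact \mathfrak{g})(A) = \mu(A \rightsemiaction \mathfrak{g}) = \mu(h^{-1} \leftaction A) = \mu(A),
\end{equation*}
so $\mu$ is $\measureamact$-semi-invariant. By Definition~\ref{def:right-amenable}, applied with the coordinate system already carried by $\mathcal{R}$, this shows $\mathcal{R}$ is right amenable, which completes the argument.

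There is no real obstacle beyond correctly extracting the set identity $A \rightsemiaction \mathfrak{g} = h^{-1} \leftaction A$ from the phrase \enquote{inverse to each other}; the only point worth flagging is that $\measureamact$-semi-invariance of a finitely additive probability measure is only required on sets on which $\blank \rightsemiaction \mathfrak{g}$ is injective, and the hypothesis makes every $\blank \rightsemiaction \mathfrak{g}$ bijective, so this restriction is harmless here.
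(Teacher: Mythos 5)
Your proposal is correct and follows essentially the same route as the paper's proof: extract the identity $A \rightsemiaction \mathfrak{g} = (h \leftaction \blank)^{-1}(A) = h^{-1} \leftaction A$ from the inverse-maps hypothesis, then compute $(\mu \measureamact \mathfrak{g})(A) = \mu(h^{-1} \leftaction A) = (h \measureamactleft \mu)(A) = \mu(A)$ using $\measureamactleft$-invariance of $\mu$ over $H$. Your closing remark that the injectivity restriction in the definition of $\measureamact$-semi-invariance is vacuous here (since each $\blank \rightsemiaction \mathfrak{g}$ is bijective) is accurate and shows the argument in fact yields the stronger unrestricted invariance, just as in the paper.
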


  \begin{proof}
    Let $\mu \in \pmeasures(M)$. Furthermore, let $\mathfrak{g} \in G \quotient G_0$. There is an $h \in H$ such that $\blank \rightsemiaction \mathfrak{g}$ and $h \leftaction \blank$ are inverse to each other. 
    Moreover, let $A \subseteq M$. Because $\blank \rightsemiaction \mathfrak{g} = (h \leftaction \blank)^{-1} = h^{-1} \leftaction \blank$, we have $A \rightsemiaction \mathfrak{g} = h^{-1} \leftaction A$. Therefore,
    \begin{align*}
      (\mu \measureamact \mathfrak{g})(A)
      &= \mu(A \rightsemiaction \mathfrak{g})\\
      &= \mu(h^{-1} \leftaction A)\\
      &= (h \measureamactleft \mu)(A).
    \end{align*}
    Thus, $\mu \measureamact \mathfrak{g} = h \measureamactleft \mu$. Hence, if $\mu$ is $\measureamactleft\restriction_{H \times [0,1]^{\powerset(M)}}$-invariant, then $\mu$ is $\measureamact$-semi-invariant. In conclusion, if $\ntuple{M, H, \leftaction\restriction_{H \times M}}$ is left amenable, then $\mathcal{R}$ is right amenable. \qed 
  \end{proof}

  \begin{lemma}
  \label{lem:less-general-sufficient-condition-for-left-implies-right-amenability}
    Let $\mathcal{R} = \ntuple{\ntuple{M, G, \leftaction}, \ntuple{m_0, \family{g_{m_0, m}}_{m \in M}}}$ be a cell space and let $H$ be a subgroup of $G$ such that $G = G_0 H$, for each element $\mathfrak{g} \in G \quotient G_0$, the map $\blank \rightsemiaction \mathfrak{g}$ is injective, 
    \begin{equation*}
      \ForEach h \in H \Holds \blank \rightsemiaction h G_0 = h \leftaction \blank,
    \end{equation*}
    and
    \begin{equation*}
      \ForEach h \in H \ForEach \mathfrak{g} \in G \quotient G_0 \Holds (\blank \rightsemiaction h G_0) \rightsemiaction \mathfrak{g} = \blank \rightsemiaction h \cdot \mathfrak{g}.
    \end{equation*}
    If $\ntuple{M, H, \leftaction\restriction_{H \times M}}$ is left amenable, then $\mathcal{R}$ is right amenable.
  \end{lemma}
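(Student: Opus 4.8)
The plan is to derive the hypothesis of Lemma~\ref{lem:general-sufficient-condition-for-left-implies-right-amenability} from the conditions stated here and then simply invoke that lemma. Thus the task reduces to exhibiting, for every $\mathfrak{g} \in G \quotient G_0$, an element $h \in H$ for which $\blank \rightsemiaction \mathfrak{g}$ and $h \leftaction \blank$ are mutually inverse bijections of $M$; once this is done, left amenability of $\ntuple{M, H, \leftaction\restriction_{H \times M}}$ yields right amenability of $\mathcal{R}$ by Lemma~\ref{lem:general-sufficient-condition-for-left-implies-right-amenability}.

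First I would record the elementary group-theoretic consequence of the hypothesis $G = G_0 H$: since $G_0$ and $H$ are subgroups and $G = G^{-1}$, we have $G = G^{-1} = (G_0 H)^{-1} = H^{-1} G_0^{-1} = H G_0$. Hence every left coset of $G_0$ admits a representative in $H$: given $\mathfrak{g} \in G \quotient G_0$, choose any $g \in \mathfrak{g}$ and write $g = h g_0$ with $h \in H$ and $g_0 \in G_0$; then $\mathfrak{g} = g G_0 = h g_0 G_0 = h G_0$.

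Next I would apply the hypothesis $\ForEach h \in H \Holds \blank \rightsemiaction h G_0 = h \leftaction \blank$ to this representative, obtaining $\blank \rightsemiaction \mathfrak{g} = \blank \rightsemiaction h G_0 = h \leftaction \blank$. Because $\leftaction$ is a group action, $h \leftaction \blank$ is a bijection of $M$ with inverse $h^{-1} \leftaction \blank$, and $h^{-1} \in H$; therefore $\blank \rightsemiaction \mathfrak{g}$ and $h^{-1} \leftaction \blank$ are inverse to each other. This is exactly the hypothesis of Lemma~\ref{lem:general-sufficient-condition-for-left-implies-right-amenability} with the same subgroup $H$, so the conclusion follows.

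I do not foresee a genuine obstacle: the only point that takes a moment's thought is the passage from $G = G_0 H$ to $G = H G_0$ (needed so that cosets have representatives in $H$), after which the argument is immediate. The remaining hypotheses — injectivity of each $\blank \rightsemiaction \mathfrak{g}$ and the compatibility identity $(\blank \rightsemiaction h G_0) \rightsemiaction \mathfrak{g} = \blank \rightsemiaction h \cdot \mathfrak{g}$ — are consistent with this picture (indeed $\blank \rightsemiaction \mathfrak{g} = h \leftaction \blank$ is automatically injective) and serve mainly to phrase the statement in a form readily checked for the examples considered in this section.
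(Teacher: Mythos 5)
Your proof is correct, but it takes a genuinely different route from the paper's. The paper keeps an arbitrary representative $g$ of $\mathfrak{g}$, writes $g^{-1} = g_0 h$ with $g_0 \in G_0$ and $h \in H$, and then uses the third \emph{and} fourth hypotheses to compute
\begin{equation*}
  (h \leftaction m) \rightsemiaction g G_0
  = (m \rightsemiaction h G_0) \rightsemiaction g G_0
  = m \rightsemiaction h g G_0
  = m \rightsemiaction G_0
  = m,
\end{equation*}
since $h g = g_0^{-1} \in G_0$; thus $h \leftaction \blank$ is a right inverse of $\blank \rightsemiaction g G_0$, surjectivity combined with the assumed injectivity gives bijectivity, and Lemma~\ref{lem:general-sufficient-condition-for-left-implies-right-amenability} applies. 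You instead change the representative itself: from $G = G_0 H$ you deduce $G = H G_0$, write $\mathfrak{g} = h G_0$ with $h \in H$, and conclude $\blank \rightsemiaction \mathfrak{g} = \blank \rightsemiaction h G_0 = h \leftaction \blank$ outright, which is already a bijection with inverse $h^{-1} \leftaction \blank$ and $h^{-1} \in H$. This is legitimate because $\rightsemiaction$ is by definition a map on $M \times G \quotient G_0$ (the formula $(m, g G_0) \mapsto g_{m_0, m} g \leftaction m_0$ does not depend on the chosen representative, as $G_0$ stabilises $m_0$), so equal cosets give equal maps. Your argument is shorter and shows, as you note, that the injectivity hypothesis and the compatibility identity $(\blank \rightsemiaction h G_0) \rightsemiaction \mathfrak{g} = \blank \rightsemiaction h \cdot \mathfrak{g}$ are not actually needed for this implication; the paper's version uses both and in exchange never has to replace the coset representative.
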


  \begin{proof}
    Let $g G_0 \in G \quotient G_0$. Because $g^{-1} \in G = G_0 H$, there is a $g_0 \in G_0$ and there is an $h \in H$ such that $g^{-1} = g_0 h$. Thus, $h = g_0^{-1} g^{-1} \in H$. Hence, for each $m \in M$,
    \begin{align*}
      ((\blank \rightsemiaction g G_0) \circ (h \leftaction \blank))(m)
      &= (h \leftaction m) \rightsemiaction g G_0\\
      &= (m \rightsemiaction h G_0) \rightsemiaction g G_0\\
      &= m \rightsemiaction h g G_0\\
      &= m \rightsemiaction g_0^{-1} g^{-1} g G_0\\
      &= m \rightsemiaction G_0\\
      &= m.
    \end{align*}
    Therefore, $h \leftaction \blank$ is right inverse to $\blank \rightsemiaction g G_0$. Hence, $\blank \rightsemiaction g G_0$ is surjective and thus, because it is injective by precondition, bijective. Therefore, $\blank \rightsemiaction g G_0$ and $h \leftaction \blank$ are inverse to each other. In conclusion, according to Lemma~\ref{lem:general-sufficient-condition-for-left-implies-right-amenability}, if $\ntuple{M, H, \leftaction\restriction_{H \times M}}$ is left amenable, then $\mathcal{R}$ is right amenable. \qed
  \end{proof}

  \begin{definition}
    Let $G$ be a group. The set
    \begin{equation*}
      Z(G) = \set{z \in G \suchthat \ForEach g \in G \Holds z g = g z} \mathnote{center $Z(G)$ of $G$}
    \end{equation*}
    is called \define{centre of $G$}.
  \end{definition}

  \begin{lemma}
    Let $G$ be a group. The centre of $G$ is a subgroup of $G$. \qed
  \end{lemma}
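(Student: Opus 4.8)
The plan is to verify directly that $Z(G)$ satisfies the three defining closure properties of a subgroup: it contains the identity, it is closed under the group multiplication, and it is closed under taking inverses. Everything will follow immediately from the defining condition $z g = g z$ for all $g \in G$, so there is no genuine obstacle here; the argument is a routine unwinding of definitions.

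First I would observe that $e_G \in Z(G)$, since $e_G g = g = g e_G$ for every $g \in G$; in particular $Z(G)$ is non-empty. Next, for the closure under multiplication, I would take $z_1$, $z_2 \in Z(G)$ and an arbitrary $g \in G$ and compute
\begin{equation*}
  (z_1 z_2) g = z_1 (z_2 g) = z_1 (g z_2) = (z_1 g) z_2 = (g z_1) z_2 = g (z_1 z_2),
\end{equation*}
using associativity together with $z_2 g = g z_2$ and then $z_1 g = g z_1$; hence $z_1 z_2 \in Z(G)$.

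Finally, for closure under inverses, I would take $z \in Z(G)$ and an arbitrary $g \in G$, start from $z g = z g$, and rewrite: since $z$ is central, $z (g z^{-1}) z = z g = g z = z (z^{-1} g) z$, and cancelling the outer factors of $z$ (equivalently, multiplying the identity $z g = g z$ by $z^{-1}$ on both sides) yields $g z^{-1} = z^{-1} g$, so $z^{-1} \in Z(G)$. Combining the three steps shows $Z(G)$ is a subgroup of $G$. The step that needs the tiniest bit of care is the last one, where one must multiply on the correct sides to pass $z^{-1}$ through $g$, but this is immediate.
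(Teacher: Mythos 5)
Your proof is correct and is the standard subgroup test (identity, closure under products, closure under inverses), each step following directly from centrality; the paper itself omits the proof of this lemma entirely, so there is nothing to compare against beyond noting that your argument is the expected one.
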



  \begin{lemma}
  \label{lem:sufficient-conditions-such-that-left-amenable-implies-right-amenable} 
    Let $\mathcal{R} = \ntuple{\ntuple{M, G, \leftaction}, \ntuple{m_0, \family{g_{m_0, m}}_{m \in M}}}$ be a cell space and let $H$ be a subgroup of $G$ such that $G$ is equal to $G_0 H$, $\leftaction\restriction_{H \times M}$ is free, and $\family{g_{m_0, m}}_{m \in M}$ is included in $Z(H)$. If $\ntuple{M, H, \leftaction\restriction_{H \times M}}$ is left amenable, then $\mathcal{R}$ is right amenable.
  \end{lemma}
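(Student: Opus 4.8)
The plan is to reduce to Lemma~\ref{lem:general-sufficient-condition-for-left-implies-right-amenability}: I would show that for every coset $\mathfrak{g} \in G \quotient G_0$ the map $\blank \rightsemiaction \mathfrak{g}$ is left translation by an element of $H$, so that its inverse is again such a translation, and then the cited lemma applies verbatim.

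First I would record two consequences of the hypotheses. Passing to inverses in $G = G_0 H$ yields $G = H G_0$, so every coset $g G_0$ meets $H$; writing a representative as $h g_0$ with $h \in H$ and $g_0 \in G_0$ and using $g_0 \leftaction m_0 = m_0$ shows moreover that $\leftaction\restriction_{H \times M}$ is transitive, and since it is free by hypothesis, $H$ acts simply transitively on $M$. Consequently, because each $g_{m_0, m}$ lies in $Z(H) \subseteq H$ and sends $m_0$ to $m$, it is the unique element of $H$ doing so.

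Next, fix $\mathfrak{g} = g G_0 \in G \quotient G_0$ and choose $h \in \mathfrak{g} \cap H$, which is possible by $G = H G_0$. Since the right semi-action is well defined on cosets, for every $m \in M$ I may evaluate on the representative $h$, getting $m \rightsemiaction \mathfrak{g} = g_{m_0, m} h \leftaction m_0$. As $g_{m_0, m} \in Z(H)$ commutes with $h \in H$, this equals $h g_{m_0, m} \leftaction m_0 = h \leftaction m$. Hence $\blank \rightsemiaction \mathfrak{g} = h \leftaction \blank$, so $\blank \rightsemiaction \mathfrak{g}$ and $h^{-1} \leftaction \blank$, with $h^{-1} \in H$, are inverse to each other. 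Lemma~\ref{lem:general-sufficient-condition-for-left-implies-right-amenability} now applies: if $\ntuple{M, H, \leftaction\restriction_{H \times M}}$ is left amenable, then $\mathcal{R}$ is right amenable.

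The only point requiring care is the middle computation — that $\rightsemiaction$ may be evaluated on the chosen representative $h \in H$, and that centrality of $g_{m_0, m}$ in $H$ is precisely what turns $g_{m_0, m} h \leftaction m_0$ into $h \leftaction m$; the rest is bookkeeping. Alternatively one could verify the four hypotheses of Lemma~\ref{lem:less-general-sufficient-condition-for-left-implies-right-amenability} directly, injectivity of each $\blank \rightsemiaction \mathfrak{g}$ being immediate from the same identity together with simple transitivity of the $H$-action, but routing through Lemma~\ref{lem:general-sufficient-condition-for-left-implies-right-amenability} is the shorter path.
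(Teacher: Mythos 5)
Your proof is correct, and it takes a genuinely shorter route than the paper. The paper verifies the three hypotheses of Lemma~\ref{lem:less-general-sufficient-condition-for-left-implies-right-amenability} (injectivity of each $\blank \rightsemiaction g G_0$, the identity $\blank \rightsemiaction h G_0 = h \leftaction \blank$ for $h \in H$, and the composition identity $(\blank \rightsemiaction h G_0) \rightsemiaction g G_0 = \blank \rightsemiaction h g G_0$), each of which requires a separate freeness argument, and then lets that intermediate lemma do the reduction to Lemma~\ref{lem:general-sufficient-condition-for-left-implies-right-amenability}. You instead observe that $G = G_0 H$ gives $G = H G_0$, so every coset $\mathfrak{g}$ has a representative $h \in \mathfrak{g} \cap H$, and then the single computation $m \rightsemiaction \mathfrak{g} = g_{m_0,m} h \leftaction m_0 = h g_{m_0,m} \leftaction m_0 = h \leftaction m$ (using only centrality of $g_{m_0,m}$ in $H$ and well-definedness of $\rightsemiaction$ on cosets, which holds since $G_0 \leftaction m_0 = \set{m_0}$) shows $\blank \rightsemiaction \mathfrak{g} = h \leftaction \blank$, whence $h^{-1} \leftaction \blank$ is its inverse and Lemma~\ref{lem:general-sufficient-condition-for-left-implies-right-amenability} applies directly. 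The core insight --- centrality converting $g_{m_0,m} h \leftaction m_0$ into $h \leftaction m$ --- is the same in both arguments, but your route bypasses the intermediate lemma entirely, and notably never invokes the hypothesis that $\leftaction\restriction_{H \times M}$ is free: injectivity of $\blank \rightsemiaction \mathfrak{g}$ comes for free once it is identified with the bijection $h \leftaction \blank$. So your argument in fact establishes the conclusion under slightly weaker hypotheses.
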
 

  \begin{proof}
    Let $g \in G$. For each $m \in M$,
    \begin{equation*}
      m \rightsemiaction g G_0
      = g_{m_0, m} g \leftaction m_0
      = g_{m_0, m} \leftaction (g \leftaction m_0).
    \end{equation*}
    Let $m \in M$. For each $m' \in M$, because $\leftaction\restriction_{Z(H) \times M}$ is free and $g_{m_0, m}$, $g_{m_0, m'} \in Z(H)$,
    \begin{align*}
      m' \rightsemiaction g G_0 = m \rightsemiaction g G_0
      &\iff g_{m_0, m'} = g_{m_0, m}\\
      &\iff m' = m.
    \end{align*}
    Therefore, $\blank \rightsemiaction g G_0$ is injective. 

    Let $m \in M$ and let $h \in H$. Because $g_{m_0, m} \in Z(G)$,
    \begin{align*}
      m \rightsemiaction h G_0
      &= g_{m_0, m} h \leftaction m_0\\
      &= h g_{m_0, m} \leftaction m_0\\
      &= h \leftaction m.
    \end{align*}
    Put $m' = m \rightsemiaction h G_0$. Then,
    \begin{align*}
      g_{m_0, m} h \leftaction m_0
      &= h g_{m_0, m} \leftaction m_0\\
      &= h \leftaction m\\
      &= m'.
    \end{align*}
    Hence, because $g_{m_0, m'} \leftaction m_0 = m'$ also and $\leftaction\restriction_{H \times M}$ is free, $g_{m_0, m'} = g_{m_0, m} h$. Therefore,
    \begin{align*}
      (m \rightsemiaction h G_0) \rightsemiaction g G_0
      &= m' \rightsemiaction g G_0\\
      &= g_{m_0, m'} g \leftaction m_0\\
      &= g_{m_0, m} h g \leftaction m_0\\
      &= m \rightsemiaction h g G_0.
    \end{align*}
    In conclusion, according to Lemma~\ref{lem:less-general-sufficient-condition-for-left-implies-right-amenability}, if $\ntuple{M, H, \leftaction\restriction_{H \times M}}$ is left amenable, then $\mathcal{R}$ is right amenable. \qed
  \end{proof}

  \begin{example} 
  \label{ex:one-dimensional-affines-right-amenable}
    Let $M = \K$ be a field, let
    \begin{equation*}
      G = \set{f \from M \to M, x \mapsto a x + b \suchthat a, b \in M, a \neq 0} 
    \end{equation*}
    be the group of affine functions with composition as group multiplication, and let
    \begin{equation*}
      H = \set{f \from M \to M, x \mapsto x + b \suchthat b \in M}
    \end{equation*}
    be the group of translations also with composition as group multiplication. The group $H$ is an abelian subgroup of $G$, which in turn is a non-abelian subgroup of the symmetry group of $M$. Moreover, according to \cite[Example~4.6.2 and Theorem~4.6.3]{ceccherini-silberstein:coornaert:2010}, the group $G$ is left amenable and hence, according to \cite[Proposition~4.5.1]{ceccherini-silberstein:coornaert:2010}, so is its subgroup $H$. Furthermore, the group $G$ acts transitively on $M$ by function application by $\leftaction$ and so does $H$ by $\leftaction\restriction_{H \times M}$, even freely so. Because the groups $G$ and $H$ are left amenable,
    so are the left group sets $\ntuple{M, G, \leftaction}$ and $\ntuple{M, H, \leftaction\restriction_{H \times M}}$. The stabiliser of $m_0 = 0$ is the group of dilations
    \begin{equation*}
      G_0 = \set{f \from M \to M, x \mapsto a x \suchthat a \in M \smallsetminus \set{0}}.
    \end{equation*}
    We have $G = G_0 H$. For each $m \in M$, let
    \begin{align*}
      g_{m_0, m} \from M &\to     M,\\
                       x &\mapsto x + m,
    \end{align*} 
    be the translation by $m$. Then, $\family{g_{m_0, m}}_{m \in M}$ is included in $Z(H) = H$. Hence, according to Lemma~\ref{lem:sufficient-conditions-such-that-left-amenable-implies-right-amenable}, the cell space $\mathcal{R} = \ntuple{\ntuple{M, G, \leftaction}, \ntuple{m_0, \family{g_{m_0, m}}_{m \in M}}}$ is right amenable.
  \end{example}

  \begin{lemma} 
    Let $H$ and $N$ be two groups, let $\phi \from H \to \automorphisms(N)$ be a group homomorphism, let $G$ be the Cartesian product $H \times N$, and let
    \begin{align*} 
      \cdot \from G \times G &\to     G,\\
             ((h,n),(h',n')) &\mapsto (h h', n \phi(h)(n')).
    \end{align*}
    The tuple $(G, \cdot)$ is a group, called \define{semi-direct product of $H$ and $N$ with respect to $\phi$}\graffito{semi-direct product $H \ltimes_\phi N$ of $H$ and $N$ with respect to $\phi$}, and denoted by $H \ltimes_\phi N$. \qed
  \end{lemma}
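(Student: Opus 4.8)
The plan is to verify the three group axioms for $(G, \cdot)$ directly from the defining formula, drawing repeatedly on two consequences of $\phi$ being a group homomorphism into $\automorphisms(N)$: on the one hand $\phi(e_H) = \identity_N$ and $\phi(h h') = \phi(h) \after \phi(h')$ for all $h, h' \in H$; on the other hand each $\phi(h)$ is an automorphism of $N$ and hence in particular an endomorphism, so it fixes $e_N$ and commutes with the formation of products and inverses in $N$.

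First I would check associativity. Expanding $\parens[\big]{(h,n) \cdot (h',n')} \cdot (h'',n'')$ gives first component $h h' h''$ and second component $n \cdot \phi(h)(n') \cdot \phi(h h')(n'')$, while $(h,n) \cdot \parens[\big]{(h',n') \cdot (h'',n'')}$ gives the same first component and second component $n \cdot \phi(h)\parens[\big]{n' \cdot \phi(h')(n'')}$. Using that $\phi(h)$ is an endomorphism to distribute it over the product, and then that $\phi$ is a homomorphism to rewrite $\phi(h)\after\phi(h') = \phi(h h')$, the two second components coincide.

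Next I would exhibit $(e_H, e_N)$ as neutral element: $(e_H,e_N)\cdot(h,n) = (h, \phi(e_H)(n)) = (h,n)$ because $\phi(e_H) = \identity_N$, and $(h,n)\cdot(e_H,e_N) = (h, n\cdot\phi(h)(e_N)) = (h,n)$ because $\phi(h)(e_N) = e_N$. Finally I would exhibit $(h^{-1}, \phi(h^{-1})(n^{-1}))$ as inverse of $(h,n)$: the product $(h,n)\cdot\parens[\big]{h^{-1},\phi(h^{-1})(n^{-1})}$ has second component $n\cdot\phi(h)\parens[\big]{\phi(h^{-1})(n^{-1})} = n\cdot\phi(e_H)(n^{-1}) = n n^{-1} = e_N$, and the product in the other order has second component $\phi(h^{-1})(n^{-1})\cdot\phi(h^{-1})(n) = \phi(h^{-1})(n^{-1}n) = \phi(h^{-1})(e_N) = e_N$, both first components being $e_H$. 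There is no genuine obstacle here; the only point requiring care is to invoke the right property of $\phi$ at each step — in particular that membership in $\automorphisms(N)$, not mere bijectivity, is what lets $\phi(h)$ pass through products and inverses inside $N$.
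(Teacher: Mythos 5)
Your verification is correct and is exactly the routine check the paper leaves to the reader (the lemma is stated with its proof omitted): associativity via $\phi(h)\after\phi(h') = \phi(h h')$ together with each $\phi(h)$ being a homomorphism of $N$, neutral element $(e_H, e_N)$, and inverse $(h^{-1}, \phi(h^{-1})(n^{-1}))$. Note that your inverse formula is the right one — equivalently $(h^{-1}, \phi(h^{-1})(n)^{-1})$ — whereas the paper's follow-up lemma writes $(h^{-1}, \phi(h^{-1})(n))$, which omits an inversion of $n$ and is an apparent typo, as your two-sided computation shows.
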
 


  \begin{lemma}
    Let $G$ be a semi-direct product of $H$ and $N$ with respect to $\phi$. The neutral element of $G$ is $(e_H, e_N)$ and, for each element $(h, n) \in G$, the inverse of $(h, n)$ is $(h^{-1}, \phi(h^{-1})(n))$. \qed
  \end{lemma}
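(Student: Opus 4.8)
The plan is to establish both claims by direct computation from the multiplication rule, relying only on two structural facts about $\phi$. Since $\phi \from H \to \automorphisms(N)$ is a group homomorphism into the automorphism group (whose operation is composition), we have $\phi(e_H) = \identity_N$ and $\phi(h^{-1}) = \phi(h)^{-1}$ for each $h \in H$; and since each $\phi(h)$ is an automorphism of $N$, it fixes the neutral element, so $\phi(h)(e_N) = e_N$. Beyond the definition of the product, these are the only ingredients I would use.

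For the neutral element, I would check that $(e_H, e_N)$ is a two-sided identity. On the right,
\begin{equation*}
  (h, n) \cdot (e_H, e_N) = (h e_H, n \, \phi(h)(e_N)) = (h, n e_N) = (h, n),
\end{equation*}
where the middle step uses $\phi(h)(e_N) = e_N$; and on the left,
\begin{equation*}
  (e_H, e_N) \cdot (h, n) = (e_H h, e_N \, \phi(e_H)(n)) = (h, \identity_N(n)) = (h, n),
\end{equation*}
where the middle step uses $\phi(e_H) = \identity_N$. Hence $(e_H, e_N)$ is the neutral element.

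For the inverse, I would take the candidate $(h, n)^{-1} = (h^{-1}, \phi(h^{-1})(n))$ and verify that it is a two-sided inverse of $(h, n)$. The first coordinate of either product collapses at once, since $h h^{-1} = h^{-1} h = e_H$, so the whole verification turns on the second coordinate. There the key move is to collapse the nested automorphism applications via $\phi(h) \circ \phi(h^{-1}) = \phi(h h^{-1}) = \phi(e_H) = \identity_N$, and symmetrically $\phi(h^{-1}) \circ \phi(h) = \identity_N$. Simplifying each product with the identities from the first paragraph then reduces it to $(e_H, e_N)$, confirming that $(h^{-1}, \phi(h^{-1})(n))$ is the two-sided inverse of $(h, n)$.

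I expect the main obstacle to be purely one of bookkeeping rather than of ideas: one must keep track of which factor each automorphism acts upon and insert the homomorphism identity $\phi(h^{-1}) = \phi(h)^{-1}$ at precisely the right place, since $N$ need not be abelian and the twisting by $\phi$ does not commute past multiplication in $N$. Once the homomorphism and automorphism identities above are in hand, both verifications are routine and require no further choices.
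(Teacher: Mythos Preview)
The paper gives no proof of this lemma at all (the \qed immediately follows the statement, signalling that it is treated as a routine fact), so there is nothing to compare your method against; and your method --- direct computation using $\phi(e_H) = \identity_N$, $\phi(h)(e_N) = e_N$, and $\phi(h^{-1}) = \phi(h)^{-1}$ --- is indeed the standard one.

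There is, however, a genuine gap in your inverse verification, and it traces back to a typo in the statement itself: the inverse of $(h,n)$ is $(h^{-1}, \phi(h^{-1})(n^{-1}))$, not $(h^{-1}, \phi(h^{-1})(n))$. Your sketch asserts that ``simplifying each product \dots reduces it to $(e_H, e_N)$'' without actually writing out the second-coordinate computation, and that is precisely where the claim fails. Carrying it out,
\begin{equation*}
  (h, n) \cdot \bigl(h^{-1}, \phi(h^{-1})(n)\bigr)
  = \bigl(e_H,\; n \cdot \phi(h)\bigl(\phi(h^{-1})(n)\bigr)\bigr)
  = (e_H,\, n \cdot n),
\end{equation*}
which equals $(e_H, e_N)$ only when $n^2 = e_N$. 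With $n^{-1}$ in place of $n$ the same line gives $n \cdot n^{-1} = e_N$, and the left-inverse check goes through analogously. So your strategy is correct, but the statement you set out to prove is not; had you actually written out the step you dismissed as ``bookkeeping'', you would have caught the missing inverse on $n$.
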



  \begin{definition}
    Let $\ntuple{M, G, \leftaction}$ be a left homogeneous space. It is called \define{principal}\graffito{principal} if and only if the action $\leftaction$ is free.
  \end{definition}

  \begin{lemma}
  \label{lem:semi-direct-product-of-group-set-with-stabiliser}
    Let $\ntuple{M, H, \leftaction_H}$ be a principal left homogeneous space. Furthermore, let $G_0$ be a group, let $\phi \from G_0 \to \automorphisms(H)$ be a group homomorphism, let $m_0$ be an element of $M$, for each element $m \in M$, let $h_{m_0,m}$ be the unique element of $H$ such that $h_{m_0,m} \leftaction m_0 = m$, and let
    \begin{align*}
      \leftaction_{G_0} \from G_0 \times M &\to     M,\\
                                  (g_0, m) &\mapsto \phi(g_0)(h_{m_0,m}) \leftaction_H m_0.
    \end{align*}
    Moreover, let $G$ be the semi-direct product of $G_0$ and $H$ with respect to $\phi$, and let
    \begin{align*}
      \leftaction \from G \times M &\to     M,\\
                     ((g_0, h), m) &\mapsto h \leftaction_H (g_0 \leftaction_{G_0} m).
    \end{align*}
    The triple $\ntuple{M, G_0, \leftaction_{G_0}}$ is a left group set and the group $G_0$ is the stabiliser of $m_0$ under $\leftaction_{G_0}$. Furthermore, the tuple $\mathcal{R} = \ntuple{\ntuple{M, G, \leftaction}, \ntuple{m_0, \family{(e_{G_0}, h_{m_0, m})}_{m \in M}}}$ is a cell space and the group $G_0 \times \set{e_H}$ is the stabiliser of $m_0$ under $\leftaction$. Moreover, under the identification of $G_0$ with $G_0 \times \set{e_H}$ and of $H$ with $\set{e_{G_0}} \times H$, the left group sets $\ntuple{M, G_0, \leftaction_{G_0}}$ and $\ntuple{M, H, \leftaction_H}$ are left group subsets of $\ntuple{M, G, \leftaction}$. 
  \end{lemma}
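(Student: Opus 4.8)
The plan is to verify the assertions in the order stated, using throughout that $\leftaction_H$ is free — so $h_{m_0,m}$ is well defined and an element of $H$ is determined by its action on $m_0$ — and that $\phi$ takes values in $\automorphisms(H)$, so $\phi(e_{G_0}) = \identity_H$, each $\phi(g_0)$ fixes $e_H$ and is a group homomorphism, and $\phi(g_0 g_0') = \phi(g_0) \after \phi(g_0')$. First I would record three elementary facts about coordinates. Since $h_{m_0,m_0} \leftaction_H m_0 = m_0 = e_H \leftaction_H m_0$, freeness gives $h_{m_0,m_0} = e_H$. For $h \in H$ and $m \in M$, from $(h\, h_{m_0,m}) \leftaction_H m_0 = h \leftaction_H m$ we get $h_{m_0,\, h \leftaction_H m} = h\, h_{m_0,m}$. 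And for $g_0 \in G_0$, the definition of $\leftaction_{G_0}$ reads $g_0 \leftaction_{G_0} m = \phi(g_0)(h_{m_0,m}) \leftaction_H m_0$, hence $h_{m_0,\, g_0 \leftaction_{G_0} m} = \phi(g_0)(h_{m_0,m})$.

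With these, the left group action axioms for $\leftaction_{G_0}$ are immediate: $e_{G_0} \leftaction_{G_0} m = \phi(e_{G_0})(h_{m_0,m}) \leftaction_H m_0 = h_{m_0,m} \leftaction_H m_0 = m$, and $g_0 \leftaction_{G_0} (g_0' \leftaction_{G_0} m) = \phi(g_0)\bigl(\phi(g_0')(h_{m_0,m})\bigr) \leftaction_H m_0 = \phi(g_0 g_0')(h_{m_0,m}) \leftaction_H m_0 = (g_0 g_0') \leftaction_{G_0} m$; each map $g_0 \leftaction_{G_0} \blank$ is then a bijection with inverse $g_0^{-1} \leftaction_{G_0} \blank$, so $G_0 \to \Sym(M)$, $g_0 \mapsto [g_0 \leftaction_{G_0} \blank]$, is a group homomorphism. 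Since $\phi(g_0)$ fixes $e_H = h_{m_0,m_0}$, we get $g_0 \leftaction_{G_0} m_0 = m_0$ for every $g_0$, so the stabiliser of $m_0$ under $\leftaction_{G_0}$ is all of $G_0$.

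For $\leftaction$ the action axioms are checked by the same bookkeeping. The neutral element $(e_{G_0}, e_H)$ acts as the identity because $(e_{G_0}, e_H) \leftaction m = e_H \leftaction_H (e_{G_0} \leftaction_{G_0} m) = m$. For composition, put $m' = (g_0', h') \leftaction m = h' \leftaction_H (g_0' \leftaction_{G_0} m)$; the facts above give $h_{m_0, m'} = h'\, \phi(g_0')(h_{m_0,m})$, so $(g_0, h) \leftaction m' = h \leftaction_H \bigl(\phi(g_0)(h_{m_0,m'}) \leftaction_H m_0\bigr) = \bigl(h\, \phi(g_0)(h')\, \phi(g_0 g_0')(h_{m_0,m})\bigr) \leftaction_H m_0$, using that $\phi(g_0)$ is an automorphism and $\phi(g_0) \after \phi(g_0') = \phi(g_0 g_0')$; this is exactly $\bigl((g_0, h) \cdot (g_0', h')\bigr) \leftaction m$ since $(g_0, h)\cdot(g_0', h') = (g_0 g_0',\, h\, \phi(g_0)(h'))$. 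Each $(g_0, h) \leftaction \blank$ is invertible, so $\leftaction$ is a left group action. Because $(e_{G_0}, h_{m_0,m}) \leftaction m_0 = h_{m_0,m} \leftaction_H m_0 = m$, the map $\blank \leftaction m_0$ is surjective, whence $\leftaction$ is transitive ($M \neq \emptyset$ and $\leftaction$ is a group action), and this same identity shows $\family{(e_{G_0}, h_{m_0,m})}_{m \in M}$ is a valid choice of coordinates, so $\mathcal{R}$ is a cell space. Moreover $(g_0, h) \leftaction m_0 = h \leftaction_H (g_0 \leftaction_{G_0} m_0) = h \leftaction_H m_0$, which equals $m_0$ iff $h = e_H$ by freeness, so the stabiliser of $m_0$ under $\leftaction$ is $G_0 \times \set{e_H}$. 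Finally, $(g_0, e_H) \leftaction m = g_0 \leftaction_{G_0} m$ and $(e_{G_0}, h) \leftaction m = h \leftaction_H m$, so under the identifications $G_0 \isomorphic G_0 \times \set{e_H}$ and $H \isomorphic \set{e_{G_0}} \times H$ the action $\leftaction$ restricts to $\leftaction_{G_0}$ and to $\leftaction_H$, exhibiting both as left group subsets of $\ntuple{M, G, \leftaction}$.

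The only step needing genuine care is the composition axiom for $\leftaction$: one must substitute the two coordinate-transformation rules in the correct order and invoke both the homomorphism property of each $\phi(g_0)$ and the composition law $\phi(g_0 g_0') = \phi(g_0) \after \phi(g_0')$; every remaining verification is a direct substitution.
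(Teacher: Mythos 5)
Your proof is correct and follows essentially the same route as the paper: establish the coordinate identities $h_{m_0,m_0}=e_H$, $h_{m_0,\,h\leftaction_H m}=h\,h_{m_0,m}$, and $h_{m_0,\,g_0\leftaction_{G_0}m}=\phi(g_0)(h_{m_0,m})$ via freeness, then verify each action axiom, the stabilisers, transitivity, and the restrictions by direct substitution. The only cosmetic difference is that the paper packages the key composition step as the intertwining relation $\phi(g_0)(h)\leftaction_H(g_0\leftaction_{G_0}m)=g_0\leftaction_{G_0}(h\leftaction_H m)$ before applying it, whereas you expand everything in the $h_{m_0,\cdot}$ coordinates at once; the computation is the same.
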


  \begin{proof}
    Because $\phi(e_{G_0}) = \identity_{\automorphisms(H)}$, for each $m \in M$,
    \begin{align*}
      e_{G_0} \leftaction_{G_0} m &= \phi(e_{G_0})(h_{m_0,m}) \leftaction_H m_0\\
                                  &= h_{m_0,m} \leftaction_H m_0\\
                                  &= m.
    \end{align*}
    Let $g_0$ and $g_0' \in G_0$, and let $m \in M$. Because $\leftaction_H$ is free and $h_{m_0, \phi(g_0')(h_{m_0,m}) \leftaction_H m_0} \leftaction_H m_0 = \phi(g_0')(h_{m_0,m}) \leftaction_H m_0$, we have $h_{m_0, \phi(g_0')(h_{m_0,m}) \leftaction_H m_0} = \phi(g_0')(h_{m_0,m})$. Therefore,
    \begin{align*}
      g_0 g_0' \leftaction_{G_0} m &= \phi(g_0 g_0')(h_{m_0,m}) \leftaction_H m_0\\
                                   &= (\phi(g_0) \circ \phi(g_0'))(h_{m_0,m}) \leftaction_H m_0\\
                                   &= \phi(g_0)(\phi(g_0')(h_{m_0,m})) \leftaction_H m_0\\
                                   &= \phi(g_0)(h_{m_0, \phi(g_0')(h_{m_0,m}) \leftaction_H m_0}) \leftaction_H m_0\\
                                   &= g_0 \leftaction_{G_0} (\phi(g_0')(h_{m_0,m}) \leftaction_H m_0)\\
                                   &= g_0 \leftaction_{G_0} (g_0' \leftaction_{G_0} m).
    \end{align*}
    In conclusion, $\ntuple{M, G_0, \leftaction_{G_0}}$ is a left group set.

    Because $h_{m_0, m_0} = e_H$, for each $g_0 \in G_0$,
    \begin{align*}
      g_0 \leftaction_{G_0} m_0 &= \phi(g_0)(e_H) \leftaction_H m_0\\
                                &= e_H \leftaction_H m_0\\
                                &= m_0.
    \end{align*}
    In conclusion, $G_0$ is the stabiliser of $m_0$ under $\leftaction_{G_0}$.

    For each $m \in M$,
    \begin{equation*}
      (e_{G_0}, e_H) \leftaction m = e_H \leftaction_H (e_{G_0} \leftaction_{G_0} m)
                                   = m.
    \end{equation*}
    Let $g_0 \in G_0$, let $h \in H$, and let $m \in M$. Because $h h_{m_0,m} \leftaction_H m_0 = h \leftaction_H m$, we have $h h_{m_0,m} = h_{m_0, h \leftaction_H m}$. Hence,
    \begin{align*}
      \phi(g_0)(h) \leftaction_H (g_0 \leftaction_{G_0} m) &= \phi(g_0)(h) \leftaction_H (\phi(g_0)(h_{m_0,m}) \leftaction_H m_0)\\
                                                           &= \phi(g_0)(h) \phi(g_0)(h_{m_0,m}) \leftaction_H m_0\\
                                                           &= \phi(g_0)(h h_{m_0,m}) \leftaction_H m_0\\
                                                           &= \phi(g_0)(h_{m_0, h \leftaction_H m}) \leftaction_H m_0\\
                                                           &= g_0 \leftaction_{G_0} (h \leftaction_H m).
    \end{align*}
    Therefore, for each $g_0 \in G_0$, each $g_0' \in G_0$, each $h \in H$, each $h' \in H$, and each $m \in M$,
    \begin{align*}
      (g_0, h) (g_0', h') \leftaction m &= (g_0 g_0', h \phi(g_0)(h')) \leftaction m\\
                                        &= h \phi(g_0)(h') \leftaction_H (g_0 g_0' \leftaction_{G_0} m)\\
                                        &= h \leftaction_H \parens[\Big]{\phi(g_0)(h') \leftaction_H \parens[\big]{g_0 \leftaction_{G_0} (g_0' \leftaction_{G_0} m)}}\\
                                        &= h \leftaction_H \parens[\Big]{g_0 \leftaction_{G_0} \parens[\big]{h' \leftaction_H (g_0' \leftaction_{G_0} m)}}\\
                                        &= (g_0, h) \leftaction \parens[\big]{h' \leftaction_H (g_0' \leftaction_{G_0} m)}\\
                                        &= (g_0, h) \leftaction \parens[\big]{(g_0', h') \leftaction m}.
    \end{align*}
    In conclusion, $\ntuple{M, G, \leftaction}$ is a left group action.

    Because $\leftaction_H$ is transitive and, for each $h \in H$ and each $m \in M$, we have $(e_{G_0}, h) \leftaction m = h \leftaction m$, the left group action $\leftaction$ is transitive and hence $\mathcal{M} = \ntuple{M, G, \leftaction}$ is a left homogeneous space. Moreover, because, for each $m \in M$,
    \begin{align*}
      (e_{G_0}, h_{m_0, m}) \leftaction m_0
      &= h_{m_0, m} \leftaction_H (e_{G_0} \leftaction_{G_0} m_0)\\
      &= h_{m_0, m} \leftaction_H m_0\\
      &= m,
    \end{align*}
    the tuple $\mathcal{K} = \ntuple{m_0, \family{(e_{G_0}, h_{m_0, m})}_{m \in M}}$ is a coordinate system for $\mathcal{M}$. Therefore, $\mathcal{R} = \ntuple{\mathcal{M}, \mathcal{K}}$ is a cell space.

    Because $G_0$ is the stabiliser of $m_0$ under $\leftaction_{G_0}$, for each $(g_0, h) \in G$, we have $(g_0, h) \leftaction m_0 = h \leftaction_H (g_0 \leftaction m_0) = h \leftaction m_0$. Because $\leftaction_H$ is free, $G_0 \times \set{e_H}$ is the stabiliser of $m_0$ under $\leftaction$.

    Under the identification of $G_0$ with $G_0 \times \set{e_H}$ and of $H$ with $\set{e_{G_0}} \times H$, we have $\leftaction\restriction_{G_0 \times M} = \leftaction_{G_0}$ and $\leftaction\restriction_{H \times M} = \leftaction_H$. \qed 
  \end{proof}

  \begin{corollary}
  \label{cor:sufficient-conditions-such-that-left-amenable-implies-right-amenable}
    In the situation of Lemma~\ref{lem:semi-direct-product-of-group-set-with-stabiliser}, let $H$ be abelian. The cell space $\mathcal{R}$ is right amenable.
  \end{corollary}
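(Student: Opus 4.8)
The plan is to derive this from Lemma~\ref{lem:sufficient-conditions-such-that-left-amenable-implies-right-amenable}, applied to the subgroup $\set{e_{G_0}} \times H$ of $G$. By Lemma~\ref{lem:semi-direct-product-of-group-set-with-stabiliser} we already know that $\mathcal{R}$ is a cell space, that the stabiliser of $m_0$ under $\leftaction$ is $G_0 \times \set{e_H}$, and that, under the identifications of $G_0$ with $G_0 \times \set{e_H}$ and of $H$ with $\set{e_{G_0}} \times H$, we have $\leftaction\restriction_{H \times M} = \leftaction_H$ and the coordinate family $\family{(e_{G_0}, h_{m_0, m})}_{m \in M}$ becomes $\family{h_{m_0, m}}_{m \in M}$. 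So it suffices to check the four hypotheses of that lemma: that $G = G_0 H$; that $\leftaction\restriction_{H \times M}$ is free; that $\family{g_{m_0, m}}_{m \in M} \subseteq Z(H)$; and that $\ntuple{M, H, \leftaction\restriction_{H \times M}}$ is left amenable.

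Three of these I expect to be immediate. For $G = G_0 H$, I would observe that in the semi-direct product every element $(g_0, h)$ factors as $(g_0, e_H) \cdot (e_{G_0}, \phi(g_0^{-1})(h))$, a product of an element of $G_0 \times \set{e_H}$ and an element of $\set{e_{G_0}} \times H$. Freeness of $\leftaction\restriction_{H \times M} = \leftaction_H$ is exactly the hypothesis that $\ntuple{M, H, \leftaction_H}$ is principal. And $\family{h_{m_0, m}}_{m \in M} \subseteq Z(H)$ holds trivially because $H$ is abelian, so $Z(H) = H$.

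The one substantive point is the left amenability of $\ntuple{M, H, \leftaction_H}$. Here I would use that $\leftaction_H$ is free and transitive, so the evaluation map $\beta \from H \to M$, $h \mapsto h \leftaction_H m_0$, is a bijection satisfying $\beta(h h') = h \leftaction_H \beta(h')$ for all $h$, $h' \in H$; in other words, an isomorphism of left group sets $\ntuple{H, H, \cdot} \isomorphic \ntuple{M, H, \leftaction_H}$. Since $H$ is abelian it is amenable, hence left amenable (e.g.\ \cite[Sect.~4.6]{ceccherini-silberstein:coornaert:2010}); let $\mu_H$ be a left-invariant finitely additive probability measure on $\powerset(H)$. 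Transporting it along $\beta$, that is, setting $\mu(A) = \mu_H(\beta^{-1}(A))$, yields a finitely additive probability measure on $M$, and its $\measureamactleft$-invariance follows from the equivariance identity $\beta^{-1}(h^{-1} \leftaction_H A) = h^{-1} \beta^{-1}(A)$ together with the left-invariance of $\mu_H$. With all four hypotheses verified, Lemma~\ref{lem:sufficient-conditions-such-that-left-amenable-implies-right-amenable} gives that $\mathcal{R}$ is right amenable.

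The main obstacle, such as it is, is this last step: checking that left amenability transfers along the torsor isomorphism $\beta$. This is routine once one notes that $\beta$ is $H$-equivariant; everything else is bookkeeping about the semi-direct product and the identifications fixed in Lemma~\ref{lem:semi-direct-product-of-group-set-with-stabiliser}.
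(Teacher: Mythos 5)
Your proposal is correct and follows essentially the same route as the paper: both reduce to Lemma~\ref{lem:sufficient-conditions-such-that-left-amenable-implies-right-amenable} by checking $G = G_0 H$, freeness of $\leftaction\restriction_{H \times M} = \leftaction_H$, the inclusion of the coordinates in $Z(H) = H$, and left amenability of $\ntuple{M, H, \leftaction_H}$. The only difference is that you spell out the transfer of left amenability along the torsor bijection $h \mapsto h \leftaction_H m_0$, which the paper leaves implicit.
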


  \begin{proof} 
    According to \cite[Theorem~4.6.1]{ceccherini-silberstein:coornaert:2010}, because $H$ is abelian, it is left amenable. Therefore, $\ntuple{M, H, \leftaction_H}$ is left amenable. Identify $G_0$ with $G_0 \times \set{e_H}$ and identify $H$ with $\set{e_{G_0}} \times H$. Then, $H$ is a subgroup of $G$, and $G = G_0 H$, and $\leftaction\restriction_{H \times M} = \leftaction_H$ is free, and, for each $m \in M$, we have $(e_{G_0}, h_{m_0, m}) \in H = Z(H)$. Hence, according to Lemma~\ref{lem:sufficient-conditions-such-that-left-amenable-implies-right-amenable}, the cell space $\mathcal{R}$ is right amenable. \qed
  \end{proof}

  \begin{example} 
    Let $d$ be a positive integer; let $E$ be the $d$-dimensional Euclidean group, that is, the symmetry group of the $d$-dimensional Euclidean space, in other words, the isometries of $\R^d$ with respect to the Euclidean metric with function composition; let $T$ be the $d$-dimensional translation group; and let $O$ be the $d$-dimensional orthogonal group. The group $T$ is abelian, a normal subgroup of $E$, and isomorphic to $\R^d$ with addition; the group $O$ is isomorphic to the quotient $E \quotient T$ and to the ($d \times d$)-dimensional orthogonal matrices with matrix multiplication; the group $E$ is isomorphic to the semi-direct product $O \ltimes_\iota T$, where $\iota \from O \to \automorphisms(\R^d)$ is the inclusion map. The groups $T$, $O$, and $E$ act on $\R^d$ on the left by function application, denoted by $\leftaction_T$, $\leftaction_O$, and $\leftaction$, respectively; under the identification of $T$ with $\R^d$ by $t \mapsto [v \mapsto v + t]$, of $O$ with the orthogonal matrices of $\R^{d \times d}$ by $A \mapsto [v \mapsto A v]$, and of $E$ with $O \ltimes_\iota T$ by $(A, t) \mapsto [v \mapsto A v + t]$, we have 
    \begin{align*}
      \leftaction_T \from T \times \R^d &\to     \R^d,\\
                                 (t, v) &\mapsto v + t,
    \end{align*}
    and
    \begin{align*}
      \leftaction_O \from O \times \R^d &\to     \R^d,\\
                                 (A, v) &\mapsto A v,
    \end{align*}
    and
    \begin{align*}
      \leftaction \from E \times \R^d &\to     \R^d,\\
                          ((A, t), v) &\mapsto A v + t,
    \end{align*}
    and
    \begin{align*}
      \iota \from O &\to     \automorphisms(\R^d),\\
                  A &\mapsto [v \mapsto A v].
    \end{align*}
    Hence, for each vector $v \in \R^d$, we have $v \leftaction_T 0 = v$, therefore, $\leftaction_O = [(A, v) \mapsto \iota(A)(v) \leftaction_T 0]$, and thus $\leftaction = [((A, t), v) \mapsto t \leftaction_T (A \leftaction_O v)]$. Moreover, because the group $(T, \after) \isomorphic (\R^d, +)$ is abelian, according to \cite[Theorem~4.6.1]{ceccherini-silberstein:coornaert:2010}, it is left amenable and so is $\ntuple{\R^d, \R^d, +} \isomorphic \ntuple{\R^d, T, \leftaction}$. In conclusion, according to Corollary~\ref{cor:sufficient-conditions-such-that-left-amenable-implies-right-amenable}, the cell space $\ntuple{\ntuple{\R^d, E, \leftaction}, \ntuple{0, \family{-v}_{v \in \R^d}}}$ is right amenable.
  \end{example} 

%

  \newpage

  \appendix

  In Appendix~\ref{apx:topologies} we present the basic theory of topologies. In Appendix~\ref{apx:nets} we present the basic theory of nets. In Appendix~\ref{apx:initial-and-product-topologies} we introduce initial and product topologies. In Appendix~\ref{apx:compactness} we introduce the notion of compactness for topological spaces. In Appendix~\ref{apx:dual-spaces} we introduce topological dual spaces of topological spaces. In Appendix~\ref{apx:hall} we state Hall's marriage and harem theorems. And in Appendix~\ref{apx:zorn} we state Zorn's lemma.

  \section{Topologies}
  \label{apx:topologies}

  The theory of topologies and nets as presented here may be found in more detail in Appendix~A in the monograph \enquote{Cellular Automata and Groups}\cite{ceccherini-silberstein:coornaert:2010}.

  \begin{definition}
    Let $X$ be a set and let $\mathcal{T}$ be a set of subsets of $X$. The set $\mathcal{T}$ is called \define{topology on $X$}\graffito{topology $\mathcal{T}$ on $X$} if, and only if
    \begin{enumerate}
      \item $\set{\emptyset, X}$ is a subset of $\mathcal{T}$,
      \item for each family $\family{O_i}_{i \in I}$ of elements in $\mathcal{T}$, the union $\bigcup_{i \in I} O_i$ is an element of $\mathcal{T}$,
      \item for each finite family $\family{O_i}_{i \in I}$ of elements in $\mathcal{T}$, the intersection $\bigcap_{i \in I} O_i$ is an element of $\mathcal{T}$. 
    \end{enumerate}
  \end{definition}

  \begin{definition}
    Let $X$ be a set, and let $\mathcal{T}$ and $\mathcal{T}'$ be two topologies on $X$. The topology $\mathcal{T}$ is called
    \begin{enumerate}
      \item \define{coarser than $\mathcal{T}'$}\graffito{coarser than $\mathcal{T}'$} if, and only if $\mathcal{T} \subseteq \mathcal{T}'$;
      \item \define{finer than $\mathcal{T}'$}\graffito{finer than $\mathcal{T}'$} if, and only if $\mathcal{T} \supseteq \mathcal{T}'$.
    \end{enumerate}
  \end{definition}

  \begin{definition} 
    Let $X$ be a set and let $\mathcal{T}$ be a topology on $X$. The tuple $(X, \mathcal{T})$ is called \define{topological space}\graffito{topological space $(X, \mathcal{T})$}, each subset $O$ of $X$ with $O \in \mathcal{T}$ is called \define{open in $X$}\graffito{open set $O$ in $X$}, each subset $A$ of $X$ with $X \smallsetminus A \in \mathcal{T}$ is called \define{closed in $X$}\graffito{closed set $A$ in $X$}, and each subset $U$ of $X$ that is both open and closed is called \define{clopen in $X$}\graffito{clopen set $U$ in $X$}. 

    The set $X$ is said to be \define{equipped with $\mathcal{T}$}\graffito{equipped with $\mathcal{T}$} if, and only if it shall be implicitly clear that $\mathcal{T}$ is the topology on $X$ being considered. The set $X$ is called \define{topological space}\graffito{topological space $X$} if, and only if it is implicitly clear what topology on $X$ is being considered. 
  \end{definition}

  \begin{example}
    Let $X$ be a set. The set $\powerset(X)$ is the finest topology on $X$. Itself as well as the topological space $(X, \powerset(X))$ are called \define{discrete}\index{discrete!topological space}\index{discrete!topology}\graffito{discrete}.
  \end{example}

  \begin{definition} 
    Let $(X, \mathcal{T})$ be a topological space, let $x$ be a point of $X$, and let $N$ be a subset of $X$. The set $N$ is called \define{neighbourhood of $x$}\graffito{neighbourhood of $x$} if, and only if there is an open subset $O$ of $X$ such that $x \in O$ and $O \subseteq N$.
  \end{definition}

  \begin{definition} 
    Let $(X, \mathcal{T})$ be a topological space and let $x$ be a point of $X$. The set of all open neighbourhoods of $x$ is denoted by $\mathcal{T}_x$. 
  \end{definition}

  \section{Nets}
  \label{apx:nets}

  \begin{definition} 
    Let $I$ be a set and let $\leq$ be a binary relation on $I$.
    The relation $\leq$ is called \define{preorder on $I$}\graffito{preorder $\leq$ on $I$} and the tuple $(I, \leq)$ is called \define{preordered set}\graffito{preordered set $(I, \leq)$} if, and only if the relation $\leq$ is reflexive and transitive.
  \end{definition}

  \begin{definition}
    Let $\leq$ be a preorder on $I$. It is called \define{directed}\graffito{directed preorder $\leq$ on $I$}\index{preorder on $I$!directed} and the preordered set $(I, \leq)$ is called \define{directed set}\graffito{directed set $(I, \leq)$} if, and only if
    \begin{equation*}
      \ForEach i \in I \ForEach i' \in I \Exists i'' \in I \SuchThat i \leq i'' \land i' \leq i''.
    \end{equation*}
  \end{definition}

  \begin{definition} 
    Let $\leq$ be a preorder on $I$, let $J$ be a subset of $I$, and let $i$ be an element of $I$. The element $i$ is called \define{upper bound of $J$ in $(I, \leq)$}\graffito{upper bound of $J$ in $(I, \leq)$} if, and only if
    \begin{equation*}
      \ForEach i' \in J \Holds i' \leq i.
    \end{equation*}
  \end{definition}

  \begin{definition} 
    Let $M$ be a set, let $I$ be a set, and let $f \from I \to M$ be a map. The map $f$ is called \define{family of elements in $M$ indexed by $I$}\graffito{family $\family{m_i}_{i \in I}$ of elements in $M$ indexed by $I$} and denoted by $\family{m_i}_{i \in I}$, where, for each index $i \in I$, $m_i = f(i)$.
  \end{definition}

  \begin{definition}
    Let $I$ be a set, let $\leq$ be a binary relation on $I$, and let $\family{m_i}_{i \in I}$ be a family of elements in $M$ indexed by $I$. The family $\family{m_i}_{i \in I}$ is called \define{net in $M$ indexed by $(I, \leq)$}\graffito{net $\family{m_i}_{i \in I}$ in $M$ indexed by $(I, \leq)$} if, and only if the tuple $(I, \leq)$ is a directed set.
  \end{definition}

  \begin{definition} 
    Let $\net{m_i}_{i \in I}$ and $\net{m_j'}_{j \in J}$ be two nets in $M$. The net $\net{m_j'}_{j \in J}$ is called \define{subnet of $\net{m_i}_{i \in I}$}\graffito{subnet $\net{m_j'}_{j \in J}$ of $\net{m_i}_{i \in I}$} if, and only if there is a map $f \from J \to I$ such that $\net{m_j'}_{j \in J} = \net{m_{f(j)}}_{j \in J}$ and
    \begin{equation*} 
      \ForEach i \in I \Exists j \in J \SuchThat \ForEach j' \in J \Holds (j' \geq j \implies f(j') \geq i).
    \end{equation*}
  \end{definition}

  \begin{definition}
    Let $(X, \mathcal{T})$ be a topological space, let $\net{x_i}_{i \in I}$ be a net in $X$ indexed by $(I, \leq)$, and let $x$ be a point of $X$. The net $\net{x_i}_{i \in I}$ is said to \define{converge to $x$}\graffito{converge to $x$} and $x$ is called \define{limit point of $\net{x_i}_{i \in I}$}\graffito{limit point of $\net{x_i}_{i \in I}$} if, and only if
    \begin{equation*}
      \ForEach O \in \mathcal{T}_x \Exists i_0 \in I \SuchThat \ForEach i \in I \Holds (i \geq i_0 \implies x_i \in O).
    \end{equation*}
  \end{definition} 

  \begin{definition}
    Let $(X, \mathcal{T})$ be a topological space and let $\net{x_i}_{i \in I}$ be a net in $X$ indexed by $(I, \leq)$. The net $\net{x_i}_{i \in I}$ is called \define{convergent}\graffito{convergent} if, and only if there is a point $x \in X$ such that it converges to $x$.
  \end{definition}

  \begin{remark} 
    Let $\net{m_i}_{i \in I}$ be a net that converges to $x$. Each subnet $\net{m_j'}_{j \in J}$ of $\net{m_i}_{i \in I}$ converges to $x$.
  \end{remark}

  \begin{lemma} 
    Let $(X, \mathcal{T})$ be a topological space, let $Y$ be a subset of $X$, and let $x$ be an element of $X$. Then, $x \in \closure{Y}$ if, and only if there is a net $\net{y_i}_{i \in I}$ in $Y$ that converges to $x$. 
  \end{lemma}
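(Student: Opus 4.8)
The plan is to prove both implications by reducing them to the standard characterisation that $x \in \closure{Y}$ holds precisely when every open neighbourhood of $x$ meets $Y$ (equivalently, when $x$ lies in no open set disjoint from $Y$), which I will take as the working description of the closure. Both directions then become short verifications against the definition of convergence of a net to $x$, namely that for each $O \in \mathcal{T}_x$ there is an index $i_0$ beyond which all net members lie in $O$.

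For the forward implication I would build the canonical neighbourhood net. First I would equip the set $\mathcal{T}_x$ of open neighbourhoods of $x$ with the reverse-inclusion relation, setting $O \leq O'$ if and only if $O \supseteq O'$; this is reflexive and transitive, and it is directed, because given $O$ and $O'$ in $\mathcal{T}_x$ the intersection $O \cap O'$ is open (a finite intersection of open sets), contains $x$, hence lies in $\mathcal{T}_x$ and is an upper bound of both under $\leq$. Note $\mathcal{T}_x$ is nonempty since $X \in \mathcal{T}_x$. Assuming $x \in \closure{Y}$, each $O \in \mathcal{T}_x$ satisfies $O \cap Y \neq \emptyset$, so by the axiom of choice I may pick $y_O \in O \cap Y$ for every such $O$, obtaining a net $\net{y_O}_{O \in \mathcal{T}_x}$ in $Y$. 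To confirm convergence to $x$, given a target $O_0 \in \mathcal{T}_x$ I would take $i_0 = O_0$; for every index $O$ with $O \geq O_0$, that is every open neighbourhood $O \subseteq O_0$, the chosen point satisfies $y_O \in O \subseteq O_0$, so $y_O \in O_0$. Thus the net converges to $x$.

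For the backward implication, let $\net{y_i}_{i \in I}$ be a net in $Y$ converging to $x$, and let $O$ be an arbitrary open neighbourhood of $x$. Convergence supplies an index $i_0 \in I$ with $y_i \in O$ for all $i \geq i_0$; since $I$ is a (nonempty) directed set, $i_0$ itself satisfies $i_0 \geq i_0$ by reflexivity, so $y_{i_0} \in O$ while also $y_{i_0} \in Y$, whence $O \cap Y \neq \emptyset$. As $O$ ranged over all open neighbourhoods of $x$, every such neighbourhood meets $Y$, and therefore $x \in \closure{Y}$.

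The only genuine work is the forward direction: correctly choosing reverse inclusion as the ordering and checking that $(\mathcal{T}_x, \supseteq)$ is directed, which is exactly where the finite-intersection axiom of the topology is used. After that the convergence checks follow immediately from the definitions, so I expect no serious obstacle; the mildest point to flag is the standing convention that the index set of a net is nonempty, which is what lets the index $i_0$ be used as an actual member in the backward direction.
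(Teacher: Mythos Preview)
Your argument is correct and is the standard proof of this classical fact: the neighbourhood net indexed by $(\mathcal{T}_x,\supseteq)$ for the forward direction, and a one-line verification from the definition of convergence for the backward direction. Both steps are carried out cleanly; the use of the finite-intersection axiom to check directedness, the appeal to choice to select the $y_O$, and the observation that $X\in\mathcal{T}_x$ ensures nonemptiness are all in order.

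The paper itself does not supply a proof here at all---it simply cites Proposition~A.2.1 of Ceccherini-Silberstein and Coornaert's monograph. So rather than a different route, you have filled in what the paper outsources; your argument is exactly the standard one that the cited reference contains.
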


  \begin{proof}
    See Proposition~A.2.1 in \enquote{Cellular Automata and Groups}\cite{ceccherini-silberstein:coornaert:2010}. \qed
  \end{proof}

  \begin{lemma} 
    Let $(X, \mathcal{T})$ be a topological space. It is Hausdorff if, and only if each convergent net in $X$ has exactly one limit point. 
  \end{lemma}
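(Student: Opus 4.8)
The plan is to prove the two implications separately, the second one by contraposition.

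First I would show that the Hausdorff property forces uniqueness of limits. Let $\net{x_i}_{i \in I}$ be a convergent net in $X$; by definition it has at least one limit point, say $x$, so it remains to rule out a second one. Suppose it also converges to a point $y$ with $y \neq x$. Since $X$ is Hausdorff, pick disjoint open sets $O \in \mathcal{T}_x$ and $O' \in \mathcal{T}_y$. Convergence to $x$ gives an index $i_0$ with $x_i \in O$ for all $i \geq i_0$, and convergence to $y$ gives an index $i_0'$ with $x_i \in O'$ for all $i \geq i_0'$. Because $(I, \leq)$ is directed, there is an $i'' \in I$ with $i'' \geq i_0$ and $i'' \geq i_0'$; then $x_{i''} \in O \cap O' = \emptyset$, a contradiction. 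Hence the limit point of $\net{x_i}_{i \in I}$ is unique.

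For the converse I would argue contrapositively and construct, from a failure of the Hausdorff property, a convergent net with two distinct limit points. If $X$ is not Hausdorff, there are distinct points $x$ and $y$ such that $O \cap O' \neq \emptyset$ for every $O \in \mathcal{T}_x$ and every $O' \in \mathcal{T}_y$. Take the index set $I = \mathcal{T}_x \times \mathcal{T}_y$ with the preorder defined by $(O, O') \leq (P, P') \iff P \subseteq O \land P' \subseteq O'$. This is a directed set: given $(O, O')$ and $(P, P')$ in $I$, the pair $(O \cap P, O' \cap P')$ again lies in $I$ (finite intersections of open sets are open, and $x \in O \cap P$, $y \in O' \cap P'$) and is an upper bound of both. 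By the axiom of choice, for each $i = (O, O') \in I$ pick a point $x_i \in O \cap O'$. Given any $U \in \mathcal{T}_x$, the index $i_0 = (U, X)$ belongs to $I$ since $X$ is an open neighbourhood of $y$, and for every $i = (O, O') \geq i_0$ we have $x_i \in O \cap O' \subseteq O \subseteq U$; thus $\net{x_i}_{i \in I}$ converges to $x$, and by the symmetric argument it converges to $y$ as well. So it is a convergent net with two distinct limit points, which contradicts the hypothesis; therefore $X$ is Hausdorff.

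The only point requiring any care is the construction of the directed index set in the converse: one must check that $\mathcal{T}_x$ and $\mathcal{T}_y$ are closed under finite intersection and contain $X$, so that $I$ is genuinely directed and the witness index $i_0$ really lies in $I$. Both facts are immediate from the topology axioms, and the rest is a direct unwinding of the definition of net convergence, so I do not expect a serious obstacle here.
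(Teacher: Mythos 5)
Your proof is correct: both directions are the standard argument, and the directedness of $\mathcal{T}_x \times \mathcal{T}_y$ under reverse inclusion together with the witness index $(U, X)$ is handled carefully. The paper itself does not prove this lemma but only cites Proposition~A.2.2 of Ceccherini-Silberstein and Coornaert, where essentially this same argument appears, so your proposal simply supplies the proof the paper delegates to its reference.
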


  \begin{proof}
    See Proposition~A.2.2 in \enquote{Cellular Automata and Groups}\cite{ceccherini-silberstein:coornaert:2010}. \qed
  \end{proof}

  \begin{definition}
    Let $(X, \mathcal{T})$ be a Hausdorff topological space, let $\net{x_i}_{i \in I}$ be a convergent net in $X$ indexed by $(I, \leq)$, and let $x$ be the limit point of $\net{x_i}_{i \in I}$. The point $x$ is denoted by $\lim_{i \in I} x_i$\graffito{$\lim_{i \in I} x_i$} and we write $x_i \underset{i \in I}{\to} x$\graffito{$x_i \underset{i \in I}{\to} x$}.
  \end{definition}

  \begin{definition}
    Let $(X, \mathcal{T})$ be a topological space, let $\net{x_i}_{i \in I}$ be a net in $X$ indexed by $(I, \leq)$, and let $x$ be an element of $X$. The point $x$ is called \define{cluster point of $\net{x_i}_{i \in I}$}\graffito{cluster point $x$ of $\net{x_i}_{i \in I}$} if, and only if
    \begin{equation*}
      \ForEach O \in \mathcal{T}_x \ForEach i \in I \Exists i' \in I \SuchThat (i' \geq i \land x_{i'} \in O).
    \end{equation*}
  \end{definition}

  \begin{lemma} 
    Let $(X, \mathcal{T})$ be a topological space, let $\net{x_i}_{i \in I}$ be a net in $X$ indexed by $(I, \leq)$, and let $x$ be an element of $X$. The point $x$ is a cluster point of $\net{x_i}_{i \in I}$ if, and only if there is a subnet of $\net{x_i}_{i \in I}$ that converges to $x$.
  \end{lemma}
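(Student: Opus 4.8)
The plan is to establish both directions of the "cluster point $\iff$ subnet converging" equivalence for nets, which is the standard Kelley-style characterisation of cluster points in topological spaces.

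For the easy direction, suppose $\net{x_j'}_{j \in J}$ is a subnet of $\net{x_i}_{i \in I}$ converging to $x$, witnessed by a map $f \from J \to I$. To show $x$ is a cluster point, fix an open neighbourhood $O \in \mathcal{T}_x$ and an index $i \in I$. Convergence of the subnet gives $j_0 \in J$ with $x_{f(j)}' \in O$ for all $j \geq j_0$; the subnet property gives $j_1 \in J$ with $f(j') \geq i$ for all $j' \geq j_1$; and directedness of $J$ gives $j_2 \geq j_0, j_1$. Then $i' = f(j_2)$ satisfies $i' \geq i$ and $x_{i'} = x_{j_2}' \in O$, which is exactly the cluster-point condition.

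For the harder direction, suppose $x$ is a cluster point of $\net{x_i}_{i \in I}$. I would build the standard index set $J = \set{(i, O) \in I \times \mathcal{T}_x \suchthat x_i \in O}$, preordered by $(i, O) \leq (i', O')$ iff $i \leq i'$ and $O \supseteq O'$. Directedness of $J$ follows from directedness of $I$ (pick an upper bound $i''$ of $i, i'$), the fact that $\mathcal{T}_x$ is closed under finite intersection ($O \cap O'$ is an open neighbourhood of $x$), and the cluster-point hypothesis applied to $O \cap O'$ and $i''$ to produce an index witnessing membership in $J$. Define $f \from J \to I$ by $f(i, O) = i$; then $\net{x_{f(j)}}_{j \in J}$ is a subnet, since given $i \in I$ one checks (using that $X \in \mathcal{T}_x$, so $(i, X) \in J$) that every $(i', O') \geq (i, X)$ has $f(i', O') = i' \geq i$. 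Finally, to see this subnet converges to $x$: given $O \in \mathcal{T}_x$, apply the cluster-point condition once to get some $i_0$ with $x_{i_0} \in O$, so $(i_0, O) \in J$; then for every $(i', O') \geq (i_0, O)$ we have $x_{f(i', O')} = x_{i'} \in O' \subseteq O$.

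The main obstacle — such as it is — is the bookkeeping in the hard direction: verifying that $J$ is genuinely directed (this is where the cluster-point hypothesis is essential, not just a formality) and that the projection $f$ satisfies the cofinality clause in the definition of subnet. Both reduce to short arguments once the preorder on $J$ is written down correctly, so I do not expect any real difficulty beyond care with the definitions. The statement is essentially Proposition~A.2.3 (or its analogue) in Ceccherini-Silberstein and Coornaert, and the proof above is the usual one.
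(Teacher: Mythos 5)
Your proof is correct: both directions are carried out with the right definitions (in particular, the directedness of $J = \set{(i, O) \in I \times \mathcal{T}_x \suchthat x_i \in O}$ genuinely uses the cluster-point hypothesis, and the cofinality clause for $f(i,O) = i$ is verified via $(i, X) \in J$), and it matches the definitions of subnet, cluster point, and convergence as stated in this paper. The paper itself gives no argument here --- it only cites Proposition~A.2.3 of Ceccherini-Silberstein and Coornaert --- and your proof is exactly the standard construction that the citation refers to, so there is nothing to add.
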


  \begin{proof}
    See Proposition~A.2.3 in \enquote{Cellular Automata and Groups}\cite{ceccherini-silberstein:coornaert:2010}. \qed
  \end{proof}

  \begin{lemma}
    Let $(X, \mathcal{T})$ and $(X', \mathcal{T}')$ be two topological spaces, let $f$ be a continuous map from $X$ to $X'$, let $\net{x_i}_{i \in I}$ be a net in $X$, and let $x$ be an element of $X$.
    \begin{enumerate}
      \item If $x$ is a limit point of $\net{x_i}_{i \in I}$, then $f(x)$ is a limit point of $\net{f(x_i)}_{i \in I}$. 
      \item If $x$ is a cluster point of $\net{x_i}_{i \in I}$, then $f(x)$ is a cluster point of $\net{f(x_i)}_{i \in I}$.
    \end{enumerate}
  \end{lemma}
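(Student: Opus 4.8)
The plan is to unwind the definitions of limit point and cluster point and to use the defining property of a continuous map, namely that the preimage of every open set is open. Both assertions run along the same lines, so I would first isolate the common step: if $O'$ is an open neighbourhood of $f(x)$ in $X'$, then $f^{-1}(O')$ is open in $X$ by continuity of $f$, and $x \in f^{-1}(O')$ because $f(x) \in O'$; hence $f^{-1}(O') \in \mathcal{T}_x$. Note also that for any index $i$ the statements $x_i \in f^{-1}(O')$ and $f(x_i) \in O'$ are literally the same.

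For the first assertion, suppose $x$ is a limit point of $\net{x_i}_{i \in I}$ and fix $O' \in \mathcal{T}'_{f(x)}$. By the common step $f^{-1}(O') \in \mathcal{T}_x$, so there is an index $i_0 \in I$ such that $x_i \in f^{-1}(O')$, that is $f(x_i) \in O'$, for every $i \geq i_0$. As $O'$ was an arbitrary open neighbourhood of $f(x)$, the net $\net{f(x_i)}_{i \in I}$ converges to $f(x)$.

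For the second assertion, suppose $x$ is a cluster point of $\net{x_i}_{i \in I}$ and fix both $O' \in \mathcal{T}'_{f(x)}$ and an index $i \in I$. Again $f^{-1}(O') \in \mathcal{T}_x$ by the common step, so there is an index $i' \geq i$ with $x_{i'} \in f^{-1}(O')$, that is $f(x_{i'}) \in O'$; hence $f(x)$ is a cluster point of $\net{f(x_i)}_{i \in I}$. Alternatively, one could derive this from the first assertion together with the earlier lemma characterising cluster points by convergent subnets, using that the image under $f$ of a subnet of $\net{x_i}_{i \in I}$ is a subnet of $\net{f(x_i)}_{i \in I}$.

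There is essentially no obstacle here: the only care needed is to keep everything phrased in terms of open neighbourhoods so as to match the definitions of convergence and cluster point used above, and to recall that continuity enters exactly once, in the common step, to guarantee that $f^{-1}(O')$ is open.
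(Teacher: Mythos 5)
Your proof is correct and complete: the common step that $f^{-1}(O')$ is an open neighbourhood of $x$, followed by unwinding the definitions of limit point and cluster point, is exactly the standard argument. The paper itself gives no proof here but merely refers to Sect.~A.2 of \cite{ceccherini-silberstein:coornaert:2010}, which proceeds along the same lines, so your write-up simply supplies the details the paper omits.
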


  \begin{proof} 
    Confer the last paragraph of Sect.~A.2 in \enquote{Cellular Automata and Groups}\cite{ceccherini-silberstein:coornaert:2010}. \qed
  \end{proof}

  \begin{definition} 
    Let $\overline{\R} = \R \cup \set{-\infty,+\infty}$ be the affinely extended real numbers and let $\net{r_i}_{i \in I}$ be a net in $\overline{\R}$ indexed by $(I, \leq)$.
    \begin{enumerate} 
      \item The limit of the net $\net{\inf_{i' \geq i} r_{i'}}_{i \in I}$ is called \define{limit inferior of $\net{r_i}_{i \in I}$}\graffito{limit inferior $\liminf_{i \in I} r_i$ of $\net{r_i}_{i \in I}$} and denoted by $\liminf_{i \in I} r_i$.
      \item The limit of the net $\net{\sup_{i' \geq i} r_{i'}}_{i \in I}$ is called \define{limit superior of $\net{r_i}_{i \in I}$}\graffito{limit superior $\limsup_{i \in I} r_i$ of $\net{r_i}_{i \in I}$} and denoted by $\limsup_{i \in I} r_i$.
    \end{enumerate}
  \end{definition}

  \section{Initial and Product Topologies}
  \label{apx:initial-and-product-topologies}

  \begin{definition} 
    Let $X$ be a set, let $I$ be a set, and, for each index $i \in I$, let $(Y_i, \mathcal{T}_i)$ be a topological space and let $f_i$ be a map from $X$ to $Y_i$. The coarsest topology on $X$ such that, for each index $i \in I$, the map $f_i$ is continuous, is called \define{initial with respect to $\family{f_i}_{i \in I}$}\graffito{initial with respect to $\family{f_i}_{i \in I}$}. 
  \end{definition}

  \begin{lemma} 
  \label{lem:map-to-initial-topology-continuous-iff-gens-after-map-continuous}
    Let $(X, \mathcal{T})$ be a topological space, where $\mathcal{T}$ is the initial topology with respect to $\family{f_i \from X \to Y_i}_{i \in I}$, let $(Z, \mathcal{S})$ be a topological space, and let $g$ be a map from $Z$ to $X$. The map $g$ is continuous if, and only if, for each index $i \in I$, the map $f_i \after g$ is continuous. \qed
  \end{lemma}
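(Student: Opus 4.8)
The plan is to prove the two implications separately. The forward one is immediate: if $g$ is continuous, then, since each $f_i \from X \to Y_i$ is continuous by the very definition of the initial topology, the composite $f_i \after g$ is continuous for each index $i \in I$, being a composition of continuous maps.

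For the backward implication, suppose that $f_i \after g$ is continuous for each index $i \in I$. I would consider the collection
\begin{equation*}
  \mathcal{A} = \set{U \subseteq X \suchthat g^{-1}(U) \in \mathcal{S}}
\end{equation*}
of all subsets of $X$ whose preimage under $g$ is open in $Z$. Since $g^{-1}$ commutes with arbitrary unions and with finite intersections and maps $\emptyset$ and $X$ to $\emptyset$ and $Z$ respectively, the collection $\mathcal{A}$ is a topology on $X$. Moreover, for each index $i \in I$ and each open subset $O$ of $Y_i$, we have $g^{-1}(f_i^{-1}(O)) = (f_i \after g)^{-1}(O) \in \mathcal{S}$ by continuity of $f_i \after g$, hence $f_i^{-1}(O) \in \mathcal{A}$, which shows that $f_i$ is continuous with respect to $\mathcal{A}$. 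Therefore $\mathcal{A}$ is one of the topologies on $X$ that makes every $f_i$ continuous, and, because $\mathcal{T}$ is by definition the coarsest such topology, we obtain $\mathcal{T} \subseteq \mathcal{A}$. This inclusion says precisely that $g^{-1}(U) \in \mathcal{S}$ for every $U \in \mathcal{T}$, that is, $g$ is continuous.

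I do not expect a genuine obstacle here; the only points needing (routine) care are the verification that $\mathcal{A}$ is a topology and the observation that the inclusion $\mathcal{T} \subseteq \mathcal{A}$ is literally the continuity of $g$. An alternative route would first show that $\set{f_i^{-1}(O) \suchthat i \in I, O \in \mathcal{T}_i}$ is a subbasis of the initial topology and then exploit that $g^{-1}$ preserves finite intersections and arbitrary unions; the pullback-topology argument above is preferable because it avoids introducing subbases and generated topologies altogether.
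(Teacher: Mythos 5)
Your proof is correct and complete. The paper itself states this lemma without proof (it is a standard fact about initial topologies, cf.\ Appendix~A.3 of \cite{ceccherini-silberstein:coornaert:2010}), so there is no argument to compare against; your pullback-topology argument --- showing that $\set{U \subseteq X \suchthat g^{-1}(U) \in \mathcal{S}}$ is a topology making every $f_i$ continuous and hence contains $\mathcal{T}$ by coarseness --- is a clean way to get the backward implication without invoking subbases.
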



  \begin{lemma}
    Let $(X, \mathcal{T})$ be a topological space, where $\mathcal{T}$ is the initial topology with respect to $\family{f_i \from X \to Y_i}_{i \in I}$, let $\net{x_{i'}}_{i' \in I'}$ be a net in $X$, and let $x$ be a point in $X$. The point $x$ is a limit point or cluster point of $\net{x_{i'}}_{i' \in I'}$ if, and only if, for each index $i \in I$, the point $f_i(x)$ is a limit point or cluster point, respectively, of $\net{f_i(x_{i'})}_{i' \in I'}$.
  \end{lemma}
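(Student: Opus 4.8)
The plan is to work from the defining property of the initial topology: since the sets $f_i^{-1}(O)$, for $i \in I$ and $O \in \mathcal{T}_i$, form a subbasis of $\mathcal{T}$ and $x \in f_i^{-1}(O)$ holds if and only if $f_i(x) \in O$, a neighbourhood base of $x$ in $X$ is given by the finite intersections $U = \bigcap_{i \in J} f_i^{-1}(O_i)$, where $J$ is a finite subset of $I$ and, for each $i \in J$, the set $O_i$ is an open neighbourhood of $f_i(x)$ in $Y_i$. Both convergence to $x$ and clustering at $x$ of the net $\net{x_{i'}}_{i' \in I'}$ may be tested on such basic neighbourhoods alone, so the whole argument reduces to bookkeeping with these finite intersections.

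The forward (``only if'') implication requires nothing new: by definition of the initial topology each map $f_i$ is continuous, hence the claim is the special case, applied to each $f_i$ separately, of the earlier lemma that a continuous map carries limit points of a net to limit points of the image net and cluster points to cluster points.

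For the backward implication in the limit-point case, assume that $f_i(x)$ is a limit point of $\net{f_i(x_{i'})}_{i' \in I'}$ for every $i \in I$, and fix a basic neighbourhood $U = \bigcap_{i \in J} f_i^{-1}(O_i)$ of $x$. For each $i \in J$ there is an $i'_i \in I'$ with $f_i(x_{i'}) \in O_i$ whenever $i' \geq i'_i$. Since $J$ is finite and $(I', \leq)$ is directed, there is an $i'_0 \in I'$ with $i'_0 \geq i'_i$ for all $i \in J$, and then $x_{i'} \in U$ for every $i' \geq i'_0$. As the sets $U$ form a neighbourhood base of $x$, the point $x$ is a limit point of $\net{x_{i'}}_{i' \in I'}$.

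The cluster-point case of the backward implication is the step I expect to be the real obstacle. Running the analogous computation, a basic neighbourhood $U = \bigcap_{i \in J} f_i^{-1}(O_i)$ of $x$ together with an index $i'_* \in I'$ calls for a single index $i'' \geq i'_*$ with $f_i(x_{i''}) \in O_i$ for all $i \in J$ \emph{simultaneously}, whereas the hypothesis supplies only, for each fixed $i \in J$ in isolation, a cofinal set of indices at which $f_i(x_{i'}) \in O_i$; a finite intersection of cofinal subsets of a directed set need not be cofinal, nor even nonempty. Before committing to this half of the statement I would therefore want to verify it against the way it is actually used, and if necessary add a hypothesis (for instance that $I$ is finite, or that a single subnet of $\net{x_{i'}}_{i' \in I'}$ witnesses the clustering in all coordinates at once). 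The limit-point half goes through precisely because ``eventually'', unlike ``cofinally'', is stable under finite intersection.
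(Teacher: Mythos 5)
Your analysis is right, and what you have found is not a gap in your own argument but an error in the statement itself. The paper offers no proof to compare against: its ``proof'' is a pointer to the last paragraph of Sect.~A.3 of Ceccherini-Silberstein and Coornaert, which in the form actually needed concerns convergence only. Your forward direction (continuity of each $f_i$ plus the preceding lemma on continuous images of limit and cluster points) and your backward direction for limit points (testing on the basic neighbourhoods $\bigcap_{i \in J} f_i^{-1}(O_i)$ with $J \subseteq I$ finite, using directedness of $I'$ to merge the finitely many thresholds) are exactly the standard argument and are complete.

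Your suspicion about the backward cluster-point implication is justified: that implication is false. Take $X = \R^2$ with the product topology, which is the initial topology with respect to the two projections $\pi_1$ and $\pi_2$, and let $\net{x_n}_{n \in \N}$ be the sequence alternating between $(0,1)$ and $(1,0)$. Then $0$ is a cluster point of $\net{\pi_1(x_n)}_{n \in \N}$ and of $\net{\pi_2(x_n)}_{n \in \N}$, so $(0,0)$ satisfies the coordinatewise hypothesis, yet $(0,0)$ is not a cluster point of $\net{x_n}_{n \in \N}$, since the sequence never meets the neighbourhood $(-\frac{1}{2},\frac{1}{2})^2$ of the origin. This example, with $\abs{I} = 2$, also rules out your proposed repair by finiteness of $I$; the obstruction is exactly the one you name, that a finite intersection of cofinal sets need not be cofinal. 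The correct substitute is the subnet formulation you sketch: $x$ is a cluster point of $\net{x_{i'}}_{i' \in I'}$ if and only if a single subnet converges to $x$, equivalently converges coordinatewise to $f_i(x)$ for all $i \in I$ simultaneously. Only the limit-point half of the lemma, specialised to the weak-$*$ topology on $\boundeds(M)^*$, is invoked in the body of the paper (to pass from pointwise convergence of $\net{(\nu_i \meanamact \mathfrak{g}) - \nu_i}_{i \in I}$ to weak-$*$ convergence in the proof of Theorem~\ref{thm:tarski-folner}), so nothing downstream is affected; but the cluster-point clause of the lemma should be weakened to the ``only if'' direction or deleted.
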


  \begin{proof}
    Confer the last paragraph of Sect.~A.3 in \enquote{Cellular Automata and Groups}\cite{ceccherini-silberstein:coornaert:2010}. \qed
  \end{proof}

  \begin{definition} 
    Let $\family{(X_i, \mathcal{T}_i)}_{i \in I}$ be a family of topological spaces, let $X$ be the set $\prod_{i \in I} X_i$, and, for each index $i \in I$, let $\pi_i$ be the projection of $X$ onto $X_i$. The initial topology on $X$ with respect to $\family{\pi_i}_{i \in I}$ is called \define{product}\graffito{product}.
  \end{definition}

  \begin{remark} 
    The product topology on $X$ has for a base the sets $\prod_{i \in I} O_i$, where, for each index $i \in I$, the set $O_i$ is an open subset of $X_i$, and the set $\set{i \in I \suchthat O_i \neq X_i}$ is finite.
  \end{remark}

  \begin{definition} 
    Let $\family{(X_i, \mathcal{T}_i)}_{i \in I}$ be a family of discrete topological spaces and let $X$ be the set $\prod_{i \in I} X_i$. The product topology on $X$ is called \define{prodiscrete}\graffito{prodiscrete}. 
  \end{definition}

  \begin{lemma} 
    Let $\family{(X_i, \mathcal{T}_i)}_{i \in I}$ be a family of Hausdorff topological spaces. The set $\prod_{i \in I} X_i$, equipped with the product topology, is Hausdorff.
  \end{lemma}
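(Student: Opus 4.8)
The plan is to prove the Hausdorff property of the product directly from the fact that the product topology is the initial topology with respect to the family of projections $\family{\pi_i}_{i \in I}$. Given two distinct points $x = \family{x_i}_{i \in I}$ and $y = \family{y_i}_{i \in I}$ of $\prod_{i \in I} X_i$, I would first note that, because $x \neq y$, there is a coordinate $j \in I$ with $x_j \neq y_j$.

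Next, using that $(X_j, \mathcal{T}_j)$ is Hausdorff, I would choose disjoint open sets $O_j, O_j' \in \mathcal{T}_j$ with $x_j \in O_j$ and $y_j \in O_j'$. Since the product topology makes every projection $\pi_j$ continuous, the preimages $\pi_j^{-1}(O_j)$ and $\pi_j^{-1}(O_j')$ are open in $\prod_{i \in I} X_i$, and evidently $x \in \pi_j^{-1}(O_j)$ and $y \in \pi_j^{-1}(O_j')$. Finally I would observe that these two open sets are disjoint, because $\pi_j^{-1}(O_j) \cap \pi_j^{-1}(O_j') = \pi_j^{-1}(O_j \cap O_j') = \pi_j^{-1}(\emptyset) = \emptyset$. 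Hence any two distinct points of the product admit disjoint open neighbourhoods, which is exactly the Hausdorff property.

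An alternative route, staying within the net-theoretic machinery of this appendix, is to take a net in $\prod_{i \in I} X_i$ that converges to points $x$ and $y$; by the lemma characterising limit points in an initial topology, each coordinate net $\net{\pi_i(z_{i'})}$ then converges to both $\pi_i(x)$ and $\pi_i(y)$ in the Hausdorff space $X_i$, forcing $\pi_i(x) = \pi_i(y)$ for every $i$, and therefore $x = y$; the characterisation of Hausdorffness by uniqueness of limit points of convergent nets then concludes the argument.

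I do not expect any genuine obstacle here: the only steps needing a moment of care are the elementary set-theoretic identity $\pi_j^{-1}(O_j) \cap \pi_j^{-1}(O_j') = \pi_j^{-1}(O_j \cap O_j')$ in the first approach, or, in the second approach, the correct invocation of the initial-topology lemma together with the net criterion for the Hausdorff property.
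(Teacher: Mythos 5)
Your proposal is correct. The paper itself gives no argument here, deferring entirely to Proposition~A.4.1 of the cited monograph, and the proof given there is precisely your first argument: separate the two points in a coordinate $j$ where they differ and pull the disjoint open sets back along the continuous projection $\pi_j$. Your second, net-theoretic route is also sound and has the minor virtue of staying entirely within the machinery the appendix develops (continuity preserves limit points, and a space is Hausdorff if and only if convergent nets have unique limits), but either version is a complete proof.
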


  \begin{proof}
    See Proposition~A.4.1 in \enquote{Cellular Automata and Groups}\cite{ceccherini-silberstein:coornaert:2010}. \qed
  \end{proof}

  \begin{definition}
    Let $X$ be a topological space. It is called \define{totally disconnected}\graffito{totally disconnected} if, and only if, for each non-empty and connected subset $A$ of $X$, we have $\abs{A} = 1$. 
  \end{definition}

  \begin{lemma}
    Let $\family{(X_i, \mathcal{T}_i)}_{i \in I}$ be a family of totally disconnected topological spaces. The set $\prod_{i \in I} X_i$, equipped with the product topology, is totally disconnected.
  \end{lemma}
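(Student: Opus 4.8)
The plan is to reduce total disconnectedness of the product $\prod_{i \in I} X_i$ to that of the individual factors by means of the projection maps. So let $A$ be an arbitrary non-empty connected subset of $\prod_{i \in I} X_i$; the goal is to show $\abs{A} = 1$.

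First I would observe that, for each index $i \in I$, the projection $\pi_i \from \prod_{i \in I} X_i \to X_i$ is continuous, because the product topology is by definition the initial topology with respect to $\family{\pi_i}_{i \in I}$. Hence the restriction $\pi_i \restriction_A$ is continuous, and therefore the image $\pi_i(A)$, equipped with the subspace topology from $X_i$, is connected: this is the standard fact that a continuous image of a connected set is connected, which, if one prefers not to cite it, is immediate from the excerpt's vocabulary — a clopen subset $U$ of $\pi_i(A)$ has clopen preimage $(\pi_i \restriction_A)^{-1}(U)$ in $A$, so by connectedness of $A$ this preimage is $\emptyset$ or $A$, whence $U$ is $\emptyset$ or $\pi_i(A)$. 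Since $A$ is non-empty, so is $\pi_i(A)$; since $X_i$ is totally disconnected, it follows that $\abs{\pi_i(A)} = 1$, say $\pi_i(A) = \set{x_i}$.

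Finally, writing an arbitrary element of $A$ as $a = \family{a_i}_{i \in I}$, we have $a_i = \pi_i(a) \in \pi_i(A) = \set{x_i}$ for each $i \in I$, so $a = \family{x_i}_{i \in I}$. Thus $A = \set{\family{x_i}_{i \in I}}$ and $\abs{A} = 1$, which is what had to be shown.

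I do not expect a genuine obstacle here; the argument is a routine coordinatewise reduction. The only points meriting a moment's care are that ``connected subset'' is to be read with respect to the subspace topology (so that the image-of-connected step applies verbatim), and the degenerate situations where $I = \emptyset$, so that $\prod_{i \in I} X_i$ is a one-point space and is trivially totally disconnected, or where some factor $X_i$ is empty, so that $\prod_{i \in I} X_i = \emptyset$ has no non-empty subsets and the condition holds vacuously — neither of which requires any work beyond what the argument above already covers.
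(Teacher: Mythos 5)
Your proof is correct, and it is essentially the argument the paper defers to: the paper gives no proof of its own here but merely cites Proposition~A.4.2 of \enquote{Cellular Automata and Groups}, whose proof is exactly this coordinatewise reduction via the continuous projections $\pi_i$. Nothing is missing; the degenerate cases you mention are indeed covered automatically.
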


  \begin{proof}
    See Proposition~A.4.2 in \enquote{Cellular Automata and Groups}\cite{ceccherini-silberstein:coornaert:2010}. \qed
  \end{proof}

  \begin{lemma}
    Let $\family{(X_i, \mathcal{T}_i)}_{i \in I}$ be a family of topological spaces and, for each index $i \in I$, let $A_i$ be a closed subset of $X_i$. The set $\prod_{i \in I} A_i$ is a closed subset of $\prod_{i \in I} X_i$, equipped with the product topology.
  \end{lemma}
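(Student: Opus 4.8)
The plan is to exhibit $\prod_{i \in I} A_i$ as an intersection of closed subsets of the product space and then invoke that arbitrary intersections of closed sets are closed. First I would recall that, by definition, the product topology on $\prod_{i \in I} X_i$ is the initial topology with respect to the family of projections $\family{\pi_i \from \prod_{i \in I} X_i \to X_i}_{i \in I}$, so in particular every projection $\pi_i$ is continuous. Since $A_i$ is closed in $X_i$, it follows that each preimage $\pi_i^{-1}(A_i)$ is a closed subset of $\prod_{i \in I} X_i$.

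Secondly I would check the elementary set-theoretic identity $\prod_{i \in I} A_i = \bigcap_{i \in I} \pi_i^{-1}(A_i)$: a family $\family{x_i}_{i \in I}$ of $\prod_{i \in I} X_i$ belongs to the left-hand side if and only if $x_i \in A_i$ for every index $i$, which is precisely the condition that $\family{x_i}_{i \in I} \in \pi_i^{-1}(A_i)$ for every $i$. Finally, the complement of $\bigcap_{i \in I} \pi_i^{-1}(A_i)$ in $\prod_{i \in I} X_i$ is the union $\bigcup_{i \in I}\parens[\big]{\prod_{i \in I} X_i \smallsetminus \pi_i^{-1}(A_i)}$ of open sets, hence open by the topology axioms, so $\prod_{i \in I} A_i$ is closed.

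I do not expect any genuine obstacle. The only points that need a moment's care are the standard facts that the preimage of a closed set under a continuous map is closed and that an arbitrary intersection of closed sets is closed, both immediate from the axioms; and the degenerate cases (some $A_i$ empty, or $I$ empty) are subsumed by the intersection identity and need no separate treatment. If one preferred to stay closer to the net-theoretic style used elsewhere in the paper, an alternative would be to take a net in $\prod_{i \in I} A_i$ converging to a point $x$ of $\prod_{i \in I} X_i$, use continuity of $\pi_i$ to get $\pi_i(x) \in \closure{A_i} = A_i$ for every $i$, and conclude $x \in \prod_{i \in I} A_i$; but the direct argument above is shorter.
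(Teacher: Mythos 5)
Your argument is correct and is exactly the standard one: the paper itself only defers to Proposition~A.4.3 of Ceccherini-Silberstein and Coornaert, whose proof is precisely your identity $\prod_{i \in I} A_i = \bigcap_{i \in I} \pi_i^{-1}(A_i)$ together with continuity of the projections and closedness of arbitrary intersections of closed sets. Nothing is missing.
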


  \begin{proof}
    See Proposition~A.4.3 in \enquote{Cellular Automata and Groups}\cite{ceccherini-silberstein:coornaert:2010}. \qed
  \end{proof}

  \section{Compactness}
  \label{apx:compactness}

  \begin{definition} 
    Let $(X, \mathcal{T})$ be a topological space and let $\family{O_i}_{i \in I}$ be a family of elements of $\mathcal{T}$. The family $\family{O_i}_{i \in I}$ is called \define{open cover of $X$}\graffito{open cover $\family{O_i}_{i \in I}$ of $X$} if, and only if, $\bigcup_{i \in I} O_i = X$.
  \end{definition}

  \begin{definition} 
    Let $(X, \mathcal{T})$ be a topological space. It is called \define{compact}\graffito{compact} if, and only if, for each open cover $\family{O_i}_{i \in I}$ of $X$, there is a finite subset $J$ of $I$ such that $\family{O_j}_{j \in J}$ is an open cover of $X$. 
  \end{definition}

  \begin{lemma}
    Let $(X, \mathcal{T})$ be a topological space. It is compact if, and only if, for each family $\family{A_i}_{i \in I}$ of closed subsets of $X$ such that, for each finite subset $J$ of $I$, $\bigcap_{j \in J} A_j \neq \emptyset$, we have $\bigcup_{i \in I} A_i \neq \emptyset$. 
  \end{lemma}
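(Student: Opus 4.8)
The plan is to derive the statement from the definition of compactness purely by passing to complements and applying De~Morgan's laws, so that open covers of $X$ correspond to families of closed subsets with empty total intersection, and finite subcovers correspond to finite subfamilies with empty intersection. Concretely, given a family $\family{A_i}_{i \in I}$ of closed subsets of $X$, I would put $O_i = X \smallsetminus A_i$ for each $i \in I$; then each $O_i$ is open, a subfamily $\family{A_j}_{j \in J}$ has empty intersection exactly when $\family{O_j}_{j \in J}$ covers $X$, and the whole family $\family{A_i}_{i \in I}$ has empty intersection exactly when $\family{O_i}_{i \in I}$ is an open cover of $X$. This dictionary makes both directions routine.

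For the forward implication I would assume $X$ is compact and let $\family{A_i}_{i \in I}$ be a family of closed sets every finite subfamily of which has non-empty intersection. Arguing by contradiction, suppose $\bigcap_{i \in I} A_i = \emptyset$; then $\family{X \smallsetminus A_i}_{i \in I}$ is an open cover of $X$, so by compactness there is a finite $J \subseteq I$ with $\bigcup_{j \in J} (X \smallsetminus A_j) = X$, equivalently $\bigcap_{j \in J} A_j = \emptyset$, contradicting the hypothesis on finite subfamilies. Hence $\bigcap_{i \in I} A_i \neq \emptyset$.

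For the converse I would assume the closed-set condition and let $\family{O_i}_{i \in I}$ be an open cover of $X$. Then $\family{X \smallsetminus O_i}_{i \in I}$ is a family of closed sets whose total intersection is $X \smallsetminus \bigcup_{i \in I} O_i = \emptyset$, so by the contrapositive of the hypothesis some finite $J \subseteq I$ satisfies $\bigcap_{j \in J} (X \smallsetminus O_j) = \emptyset$, i.e.\ $\bigcup_{j \in J} O_j = X$. Thus $\family{O_j}_{j \in J}$ is a finite subcover and $X$ is compact.

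I do not expect any genuine obstacle here: the whole argument is complementation together with De~Morgan, and the only point needing a moment's care is the bookkeeping for the empty index set and the empty subfamily $J = \emptyset$ (for which $\bigcap_{j \in J} A_j = X$, so the finite-intersection hypothesis already forces $X \neq \emptyset$), which is harmless for the equivalence. If one prefers to avoid the two contradiction steps, the same reasoning can be packaged contrapositively in one line: $X$ fails to be compact if and only if there is an open cover with no finite subcover, if and only if there is a family of closed subsets of $X$ all of whose finite subfamilies have non-empty intersection while the total intersection is empty.
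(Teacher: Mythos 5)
Your argument is correct and is exactly the standard De~Morgan duality between open covers and families of closed sets with the finite intersection property; the paper itself gives no proof here (it only cites the first paragraph of Sect.~A.5 of Ceccherini-Silberstein--Coornaert), and that reference uses the same complementation argument, so there is nothing substantive to compare. One point worth making explicit: the lemma as printed concludes $\bigcup_{i \in I} A_i \neq \emptyset$, which must be a typo for $\bigcap_{i \in I} A_i \neq \emptyset$ (the union version is trivially true for non-empty $I$ and false for $I = \emptyset$ with $X \neq \emptyset$); you have, correctly, proved the intersection version, and your remark about the empty subfamily $J = \emptyset$ forcing $X \neq \emptyset$ is the right bookkeeping.
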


  \begin{proof} 
    Confer first paragraph of Sect.~A.5 in \enquote{Cellular Automata and Groups}\cite{ceccherini-silberstein:coornaert:2010}. \qed
  \end{proof}

  \begin{theorem} 
    Let $(X, \mathcal{T})$ be a topological space. The following statements are equivalent:
    \begin{enumerate}
      \item The space $(X, \mathcal{T})$ is compact; 
      \item Each net in $X$ has a cluster point with respect to $\mathcal{T}$; 
      \item Each net in $X$ has a convergent subnet with respect to $\mathcal{T}$.
    \end{enumerate}
  \end{theorem}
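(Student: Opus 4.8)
The plan is to prove $(1) \Rightarrow (2)$, the equivalence $(2) \Leftrightarrow (3)$, and $(2) \Rightarrow (1)$, relying on two results recalled just above: that a point is a cluster point of a net if and only if some subnet of it converges to that point, and that $(X, \mathcal{T})$ is compact if and only if every family of closed subsets of $X$ with the finite intersection property has non-empty total intersection.

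For $(1) \Rightarrow (2)$: given a net $\net{x_i}_{i \in I}$ indexed by $(I, \leq)$, I would put $A_i = \closure{\set{x_j \suchthat j \in I, j \geq i}}$ for each $i \in I$. Each $A_i$ is closed, and the family $\family{A_i}_{i \in I}$ has the finite intersection property: given $i_1, \dotsc, i_n \in I$, directedness of $(I, \leq)$ supplies an $i^\ast$ above all of them, so $x_{i^\ast} \in \bigcap_{k = 1}^n A_{i_k}$. Compactness then yields an $x \in \bigcap_{i \in I} A_i$. For every open neighbourhood $O$ of $x$ and every $i \in I$, since $x$ lies in the closure of $\set{x_j \suchthat j \geq i}$, the set $O$ meets $\set{x_j \suchthat j \geq i}$, so there is an $i' \geq i$ with $x_{i'} \in O$; hence $x$ is a cluster point of $\net{x_i}_{i \in I}$.

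The equivalence $(2) \Leftrightarrow (3)$ is immediate from the recalled cluster-point lemma: a net has a cluster point if and only if it has a convergent subnet, so every net in $X$ has the former property if and only if every net in $X$ has the latter. For $(2) \Rightarrow (1)$: I would verify the finite-intersection characterisation of compactness. Let $\family{A_i}_{i \in I}$ be a family of closed subsets of $X$ with $\bigcap_{j \in J} A_j \neq \emptyset$ for every finite $J \subseteq I$. Let $\mathcal{J}$ be the set of finite subsets of $I$, ordered by inclusion — a directed set — and for each $J \in \mathcal{J}$ choose $x_J \in \bigcap_{j \in J} A_j$. By $(2)$ the net $\net{x_J}_{J \in \mathcal{J}}$ has a cluster point $x$. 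For each $i \in I$, if $x \notin A_i$ then $X \smallsetminus A_i$ is an open neighbourhood of $x$, so there is a $J \in \mathcal{J}$ with $\set{i} \subseteq J$ and $x_J \in X \smallsetminus A_i$, contradicting $x_J \in \bigcap_{j \in J} A_j \subseteq A_i$. Hence $x \in \bigcap_{i \in I} A_i$, so this intersection is non-empty, and $X$ is compact.

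The only step requiring genuine care is $(2) \Rightarrow (1)$: one must choose the correct directed index set, namely the finite subsets of $I$ ordered by inclusion, so that a cluster point of the resulting net is forced into every $A_i$. The remaining implications are routine given the two recalled lemmas together with the definitions of cluster point, neighbourhood, and closure. (One should also read the stated finite-intersection characterisation of compactness with $\bigcap_{i \in I} A_i \neq \emptyset$ as its conclusion.)
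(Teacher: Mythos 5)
Your proof is correct and complete; the paper itself offers no argument here, merely citing Theorem~A.5.1 of Ceccherini-Silberstein and Coornaert, and your three steps --- $(1)\Rightarrow(2)$ via the closed tails $\closure{\set{x_j \suchthat j \geq i}}$ and the finite-intersection characterisation, $(2)\Leftrightarrow(3)$ via the cluster-point/subnet lemma, and $(2)\Rightarrow(1)$ via the net indexed by finite subsets of $I$ ordered by inclusion --- are exactly the standard argument found there. You were also right to read the conclusion of the paper's finite-intersection lemma as $\bigcap_{i \in I} A_i \neq \emptyset$ rather than the printed $\bigcup_{i \in I} A_i \neq \emptyset$, which is a typo.
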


  \begin{proof}
    See Theorem~A.5.1 in \enquote{Cellular Automata and Groups}\cite{ceccherini-silberstein:coornaert:2010}. \qed
  \end{proof}

  \begin{theorem}[Andrey Nikolayevich Tikhonov, 1935] 
  \label{thm:tychonoff}
    Let $\family{(X_i, \mathcal{T}_i)}_{i \in I}$ be a family of compact topological spaces. The set $\prod_{i \in I} X_i$, equipped with the product topology, is compact.
  \end{theorem}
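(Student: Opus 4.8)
The plan is to reduce compactness of $X = \prod_{i \in I} X_i$ with the product topology to the finite-intersection-property criterion stated just above, and then run the classical maximal-family argument. First I would record a convenient reformulation of that criterion: since $F_1 \cap \dots \cap F_n \subseteq \closure{F_1} \cap \dots \cap \closure{F_n}$ and closed sets are their own closures, the space $X$ is compact if and only if every family $\mathcal{F} \subseteq \powerset(X)$ with the finite intersection property satisfies $\bigcap_{F \in \mathcal{F}} \closure{F} \neq \emptyset$. So let such an $\mathcal{F}$ be given; I must produce a point in $\bigcap_{F \in \mathcal{F}} \closure{F}$.

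Next I would enlarge $\mathcal{F}$ to a maximal such family. The collection of all families in $\powerset(X)$ with the finite intersection property that contain $\mathcal{F}$, ordered by inclusion, is non-empty and closed under unions of chains — the finite intersection property concerns only finitely many sets at a time, and those lie in a single member of any chain — so Zorn's lemma (Appendix~\ref{apx:zorn}) yields a maximal such family $\mathcal{M}$. From maximality I would read off two standard closure properties: (i) $M_1, \dots, M_n \in \mathcal{M}$ implies $M_1 \cap \dots \cap M_n \in \mathcal{M}$, because adjoining this intersection keeps the finite intersection property; and (ii) if $S \subseteq X$ meets every member of $\mathcal{M}$ then $S \in \mathcal{M}$, because by (i) $S$ then meets every finite intersection of members of $\mathcal{M}$, so $\mathcal{M} \cup \set{S}$ still has the finite intersection property and maximality forces $S \in \mathcal{M}$.

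Then I would build the point. For each index $i \in I$ the family $\family{\pi_i(M)}_{M \in \mathcal{M}}$ has the finite intersection property in $X_i$, since projections preserve non-empty intersections; applying the reformulated criterion to the compact space $X_i$, I pick $x_i \in \bigcap_{M \in \mathcal{M}} \closure{\pi_i(M)}$, and set $x = \family{x_i}_{i \in I} \in X$. The claim is that $x \in \closure{M}$ for every $M \in \mathcal{M}$, which finishes the proof because $\mathcal{F} \subseteq \mathcal{M}$. To verify it, let $U$ be a basic product-open neighbourhood of $x$, say $U = \bigcap_{i \in J} \pi_i^{-1}(U_i)$ with $J \subseteq I$ finite and each $U_i$ open in $X_i$ with $x_i \in U_i$. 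For $i \in J$, since $x_i \in \closure{\pi_i(M)}$ we have $U_i \cap \pi_i(M) \neq \emptyset$ for every $M \in \mathcal{M}$, i.e. $\pi_i^{-1}(U_i)$ meets every member of $\mathcal{M}$, so $\pi_i^{-1}(U_i) \in \mathcal{M}$ by (ii); by (i) then $U = \bigcap_{i \in J} \pi_i^{-1}(U_i) \in \mathcal{M}$, and the finite intersection property gives $U \cap M \neq \emptyset$. As such $U$ form a neighbourhood base at $x$, we get $x \in \closure{M}$.

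The main obstacle is the choice-theoretic core: Tychonoff's theorem is equivalent to the axiom of choice, so no elementary net-diagonalisation can suffice, and the proof must pass through a maximality principle — here Zorn's lemma for $\mathcal{M}$, together with an unrestricted choice to fix all the coordinates $x_i$ simultaneously. A net-theoretic alternative of the same strength would first show that every net has a universal (ultra-)subnet, that universal nets are preserved by arbitrary maps and converge to each of their cluster points, and then combine the characterisation ``$X$ compact $\iff$ every net has a cluster point'' with the fact that a point of the product is a cluster point of a net exactly when each coordinate is; I would nonetheless prefer the filter argument above, since the closed-set finite-intersection criterion is already available in the text.
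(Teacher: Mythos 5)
Your argument is correct, but note that the paper does not actually prove this theorem: its ``proof'' is a citation to Theorem~A.5.2 of Ceccherini-Silberstein--Coornaert, where the result is obtained by the net-theoretic route you sketch in your last paragraph (every net has a universal subnet, universal nets push forward under arbitrary maps and converge to each cluster point, and compactness is the existence of cluster points, checked coordinatewise). What you supply instead is the classical Bourbaki argument: extend a family with the finite intersection property to a maximal one $\mathcal{M}$ via Zorn's lemma, use the two closure properties of $\mathcal{M}$ (stability under finite intersections, and absorption of any set meeting all members) to show that the point $x$ assembled from coordinatewise cluster points lies in $\closure{M}$ for every $M \in \mathcal{M}$. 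All steps check out: the union of a chain of such families again has the finite intersection property because any finite subfamily lies in a single link of the chain; the families $\family{\pi_i(M)}_{M \in \mathcal{M}}$ inherit the finite intersection property since $\pi_i(M_1 \cap \dots \cap M_n) \subseteq \pi_i(M_1) \cap \dots \cap \pi_i(M_n)$; and the verification that each subbasic set $\pi_i^{-1}(U_i)$ meets every $M \in \mathcal{M}$ is exactly the statement $x_i \in \closure{\pi_i(M)}$. The one cosmetic point worth flagging is that the finite-intersection criterion as printed in the appendix has a typo ($\bigcup_{i \in I} A_i \neq \emptyset$ where $\bigcap_{i \in I} A_i \neq \emptyset$ is meant), so your reformulation in terms of closures is the version you actually need, and you justify it correctly. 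Compared with the universal-net proof, your filter-style argument avoids developing universal subnets (which the paper's appendix does not in fact provide), at the cost of invoking Zorn's lemma explicitly plus a simultaneous choice of the coordinates $x_i$; both routes are of full choice strength, as you observe, so neither is more economical foundationally, but yours is self-contained relative to what the paper states.
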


  \begin{proof}
    See Theorem~A.5.2 in \enquote{Cellular Automata and Groups}\cite{ceccherini-silberstein:coornaert:2010}. \qed
  \end{proof}

  \begin{corollary} 
  \label{cor:tychonoff}
    Let $\family{(X_i, \mathcal{T}_i)}_{i \in I}$ be a family of finite topological spaces. The set $\prod_{i \in I} X_i$, equipped with the product topology, is compact.
  \end{corollary}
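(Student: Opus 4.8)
The plan is to obtain this as an immediate consequence of Tychonoff's theorem (Theorem~\ref{thm:tychonoff}); the only thing to check beyond invoking that theorem is the elementary fact that a finite topological space is compact.

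First I would verify that each factor $(X_i, \mathcal{T}_i)$ is compact. Let $\family{O_j}_{j \in J}$ be an open cover of $X_i$. Since $X_i$ is finite, for each point $x \in X_i$ one may pick an index $j_x \in J$ with $x \in O_{j_x}$, and then $\set{O_{j_x} \suchthat x \in X_i}$ is a finite subfamily that still covers $X_i$. Hence $(X_i, \mathcal{T}_i)$ is compact. (Alternatively, one may simply note that $\mathcal{T}_i$ is a finite set, so every open cover is already finite.)

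Then I would apply Theorem~\ref{thm:tychonoff} to the family $\family{(X_i, \mathcal{T}_i)}_{i \in I}$, each member of which is now known to be compact, to conclude that $\prod_{i \in I} X_i$, equipped with the product topology, is compact.

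I do not expect any genuine obstacle here; the statement is a specialisation of Tychonoff's theorem, and the degenerate cases (some $X_i$ empty, or $I$ empty) are already subsumed by that theorem as stated.
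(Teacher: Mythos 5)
Your proposal is correct and matches the paper's approach: the paper cites the paragraph before Corollary~A.5.3 in Ceccherini-Silberstein and Coornaert, which likewise observes that finite topological spaces are compact and then applies Tychonoff's theorem (Theorem~\ref{thm:tychonoff}). Your verification that each finite factor is compact is exactly the only non-trivial ingredient, and it is handled correctly.
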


  \begin{proof}
    See paragraph before Corollary~A.5.3 in \enquote{Cellular Automata and Groups}\cite{ceccherini-silberstein:coornaert:2010}. \qed
  \end{proof}

  \section{Dual Spaces}
  \label{apx:dual-spaces}

  The theory of dual spaces as presented here may be found in more detail in Appendix~F in \enquote{Cellular Automata and Groups}\cite{ceccherini-silberstein:coornaert:2010}.

  In this section, let $(X, \norm{\blank})$ be a normed $\R$-vector space.

  \begin{definition} 
    The vector space 
    \begin{equation*}
      X^* = \set{\psi \from X \to \R \text{ linear} \suchthat \psi \text{ is continuous}} \mathnote{topological dual space $X^*$ of $X$} 
    \end{equation*}
    is called \define{topological dual space of $X$}.
  \end{definition}

  \begin{definition}
    \begin{enumerate}
      \item The norm
            \begin{align*}
              \norm{\blank}_{X^*} \from X^* &\to     \R, \mathnote{operator norm $\norm{\blank}_{X^*}$ on $X^*$}\\
                                       \psi &\mapsto \sup_{x \in X \smallsetminus \set{0}} \frac{\abs{\psi(x)}}{\norm{x}},
            \end{align*}
            is called \define{operator norm on $X^*$}.
      \item The topology on $X^*$ induced by $\norm{\blank}_{X^*}$ is called \define{strong topology on $X^*$}\graffito{strong topology on $X^*$}.
    \end{enumerate}
  \end{definition}

  \begin{definition}
    Let $x$ be an element of $X$. The map
    \begin{align*}
      \ev_x \from X^* &\to     \R, \mathnote{evaluation map $\ev_x$ at $x$}\\
                 \psi &\mapsto \psi(x),
    \end{align*}
    is called \define{evaluation map at $x$}.
  \end{definition}

  \begin{definition} 
    The initial topology on $X^*$ with respect to $\family{\ev_x}_{x \in X}$ is called \define{weak-$*$ topology on $X^*$}\graffito{weak-$*$ topology on $X^*$}.
  \end{definition}

  \begin{lemma}
    Let $\net{\psi_i}_{i \in I}$ be a net in $X^*$, let $\psi$ be an element of $X^*$, and let $X^*$ be equipped with the weak-$*$ topology. The net $\net{\psi_i}_{i \in I}$ converges to $\psi$ if, and only if, for each element $x$ of $X$, the net $\net{\psi_i(x)}_{i \in I}$ converges to $\psi(x)$. \qed
  \end{lemma}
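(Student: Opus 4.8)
The plan is to recognise the statement as a direct instance of the already-established characterisation of limit points in an initial topology, applied to the defining family of the weak-$*$ topology.

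First I would unwind the definition: the weak-$*$ topology on $X^*$ is, by definition, the initial topology with respect to the family $\family{\ev_x}_{x \in X}$, where each evaluation map is $\ev_x \from X^* \to \R$, $\psi \mapsto \psi(x)$. Thus the hypotheses of the lemma characterising limit (and cluster) points in an initial topology are met, with ambient space $X^*$, index set $X$, and coordinate maps $f_x = \ev_x$ into the target spaces $Y_x = \R$.

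Next I would apply that lemma to the net $\net{\psi_i}_{i \in I}$ and the point $\psi$. It yields that $\psi$ is a limit point of $\net{\psi_i}_{i \in I}$ if and only if, for each $x \in X$, the point $\ev_x(\psi)$ is a limit point of $\net{\ev_x(\psi_i)}_{i \in I}$. Substituting $\ev_x(\psi) = \psi(x)$ and $\ev_x(\psi_i) = \psi_i(x)$ turns this into the assertion that $\psi$ is a limit point of $\net{\psi_i}_{i \in I}$ if and only if, for each $x \in X$, the point $\psi(x)$ is a limit point of $\net{\psi_i(x)}_{i \in I}$.

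Finally I would translate back through the definition of convergence, in which ``$y$ is a limit point of a net'' and ``the net converges to $y$'' are synonymous, to conclude that $\net{\psi_i}_{i \in I}$ converges to $\psi$ in the weak-$*$ topology exactly when $\net{\psi_i(x)}_{i \in I}$ converges to $\psi(x)$ in $\R$ for every $x \in X$, which is the claim. There is essentially no obstacle here: the only points to check are that the weak-$*$ topology really is the initial topology with respect to $\family{\ev_x}_{x \in X}$, so that the cited lemma applies verbatim, and the terminological identification of ``limit point'' with ``converges to''. The cluster-point half of the cited lemma is not needed for this equivalence, though it confirms the same pattern.
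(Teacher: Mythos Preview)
Your proposal is correct and matches the paper's implicit reasoning: the lemma is stated there with a bare \qed, since it is an immediate instance of the appendix lemma on limit points in an initial topology applied to the defining family $\family{\ev_x}_{x \in X}$ of the weak-$*$ topology. Your unwinding via $\ev_x(\psi) = \psi(x)$ is exactly the intended one-line justification.
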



  \begin{lemma}
  \label{lem:weak-star-coarser-than-strong-topology}
    The weak-$*$ topology on $X^*$ is coarser than the strong topology on $X^*$. \qed
  \end{lemma}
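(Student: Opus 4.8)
The plan is to invoke the universal property of the initial topology. By definition, the weak-$*$ topology on $X^*$ is the coarsest topology on $X^*$ with respect to which every evaluation map $\ev_x \from X^* \to \R$, $x \in X$, is continuous. Hence it suffices to show that each $\ev_x$ is continuous when $X^*$ is equipped with the strong topology; the strong topology then belongs to the family of topologies making all the maps $\ev_x$ continuous and, because the weak-$*$ topology is the coarsest member of that family, it is contained in — that is, coarser than — the strong topology.

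To show that $\ev_x$ is continuous with respect to the strong topology, let $\psi$, $\psi' \in X^*$. By the definition of the operator norm,
\begin{equation*}
  \abs{\ev_x(\psi) - \ev_x(\psi')} = \abs{(\psi - \psi')(x)} \leq \norm{\psi - \psi'}_{X^*} \cdot \norm{x}.
\end{equation*}
Thus $\ev_x$ is Lipschitz continuous with Lipschitz constant $\norm{x}$ with respect to the metric induced by $\norm{\blank}_{X^*}$, and in particular continuous with respect to the strong topology.

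Putting the two observations together — formally, by applying Lemma~\ref{lem:map-to-initial-topology-continuous-iff-gens-after-map-continuous} to the identity map from $X^*$ equipped with the strong topology to $X^*$ equipped with the weak-$*$ topology — shows that this identity map is continuous, which is precisely the assertion that the weak-$*$ topology is coarser than the strong topology. There is essentially no obstacle in this argument; the only points requiring a little care are the direction of the inequality coming from the definition of the operator norm and the convention that \emph{coarser} means set-theoretic inclusion of topologies.
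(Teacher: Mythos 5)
Your proof is correct: the paper states this lemma without proof, and your argument --- each evaluation map $\ev_x$ is bounded by $\norm{x}$ in the operator norm, hence strongly continuous, so the strong topology lies in the family of topologies making all $\ev_x$ continuous, of which the weak-$*$ topology is by definition the coarsest --- is exactly the intended one. The appeal to Lemma~\ref{lem:map-to-initial-topology-continuous-iff-gens-after-map-continuous} via the identity map and the remark on the paper's convention for \emph{coarser} are both handled correctly.
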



  \begin{corollary}
    Let $\net{\psi_i}_{i \in I}$ be a net in $X^*$ that converges to $\psi$ with respect to the strong topology on $X^*$. The net $\net{\psi_i}_{i \in I}$ converges to $\psi$ with respect to the weak-$*$ topology on $X^*$. \qed
  \end{corollary}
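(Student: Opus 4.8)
The plan is to deduce this directly from Lemma~\ref{lem:weak-star-coarser-than-strong-topology}, together with the elementary principle that, whenever one topology on a set is coarser than another, convergence of a net in the finer topology forces convergence, to the same point, in the coarser one. The only ingredient beyond that lemma is the definition of net convergence: a net $\net{\psi_i}_{i \in I}$ converges to $\psi$ with respect to a topology $\mathcal{T}$ on $X^*$ if and only if, for every $O \in \mathcal{T}$ with $\psi \in O$, there is an index $i_0 \in I$ such that $\psi_i \in O$ for all $i \geq i_0$.

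Carrying this out, I would let $O$ be an arbitrary weak-$*$-open subset of $X^*$ containing $\psi$. By Lemma~\ref{lem:weak-star-coarser-than-strong-topology}, the weak-$*$ topology on $X^*$ is contained in the strong topology on $X^*$, so $O$ is in particular strongly open, hence a strong-topology neighbourhood of $\psi$. Since by hypothesis $\net{\psi_i}_{i \in I}$ converges to $\psi$ with respect to the strong topology, there is an index $i_0 \in I$ with $\psi_i \in O$ for every $i \geq i_0$. As $O$ was an arbitrary weak-$*$-open set containing $\psi$, this is exactly the condition for $\net{\psi_i}_{i \in I}$ to converge to $\psi$ with respect to the weak-$*$ topology.

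I do not expect any real obstacle here: the statement is essentially a one-line corollary of the preceding lemma, and the argument is the standard comparison-of-topologies observation. (An alternative route would be to invoke the characterisation of weak-$*$ convergence as pointwise convergence stated just above, together with the estimate $\abs{\psi_i(x) - \psi(x)} \leq \norm{\psi_i - \psi}_{X^*}\,\norm{x}$ for each $x \in X$; but going through the inclusion of topologies is shorter and avoids invoking the operator norm at all.)
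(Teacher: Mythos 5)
Your argument is correct and is exactly the one the paper intends: the corollary is stated with no written proof because it follows immediately from Lemma~\ref{lem:weak-star-coarser-than-strong-topology} via the standard fact that net convergence in a finer topology implies net convergence, to the same point, in any coarser topology. Your spelled-out version of that one-liner is sound.
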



  \begin{definition} 
    Let $Y$ be a subset of $X$. The set $Y$ is called
            \define{convex}\graffito{convex} if, and only if,
            \begin{equation*}
              \ForEach (y, y') \in Y \times Y \ForEach t \in [0,1] \Holds t y + (1 - t) y' \in Y. 
            \end{equation*}
  \end{definition}

  \begin{definition} 
    The topological vector space $X$ is called \define{locally convex} if, and only if, the origin has a neighbourhood base of convex sets. 
  \end{definition}

  \begin{lemma}
    Let $X^*$ be equipped with the weak-$*$ topology and let $\psi$ be an element of $X^*$. A neighbourhood base of $\psi$ is given by the sets
      \begin{multline*}
        B(\psi, F, \varepsilon) = \set{\psi' \in X^* \suchthat \ForEach x \in F \Holds \abs{\psi(x) - \psi'(x)} < \varepsilon},\\
        \text{ for } F \subseteq X \text{ finite and } \varepsilon \in \R_{> 0}.
        \mathnote{$B(\psi, F, \varepsilon)$, for $\psi \in X^*$, $F \subseteq X$ finite, $\varepsilon \in \R_{> 0}$} \qed
      \end{multline*}
  \end{lemma}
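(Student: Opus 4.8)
The plan is to recognise the weak-$*$ topology on $X^*$ as, by definition, the initial topology with respect to the family of evaluation maps $\family{\ev_x}_{x \in X}$, and then to convert the standard subbase of such a topology into a neighbourhood base at $\psi$. The only analytic input needed is that $\R$, being a metric space, has the symmetric intervals $(\psi(x) - \varepsilon, \psi(x) + \varepsilon)$ as a neighbourhood base at each point $\psi(x)$.

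First I would check that every set $B(\psi, F, \varepsilon)$, for $F \subseteq X$ finite and $\varepsilon \in \R_{> 0}$, is an open neighbourhood of $\psi$. This follows by writing it as the finite intersection $B(\psi, F, \varepsilon) = \bigcap_{x \in F} \ev_x^{-1}((\psi(x) - \varepsilon, \psi(x) + \varepsilon))$: each $\ev_x$ is continuous for the weak-$*$ topology (by definition of that topology), each interval $(\psi(x) - \varepsilon, \psi(x) + \varepsilon)$ is open in $\R$, and a finite intersection of open sets is open; moreover $\psi \in B(\psi, F, \varepsilon)$ since $\abs{\psi(x) - \psi(x)} = 0 < \varepsilon$ for every $x \in F$. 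In particular each $B(\psi, F, \varepsilon)$ is a neighbourhood of $\psi$, so to establish the neighbourhood-base property it remains only to show that every $O \in \mathcal{T}_\psi$ contains some $B(\psi, F, \varepsilon)$, since an arbitrary neighbourhood of $\psi$ contains such an open $O$.

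So let $O \in \mathcal{T}_\psi$. Since the sets $\ev_x^{-1}(U)$, for $x \in X$ and $U$ open in $\R$, form a subbase of the weak-$*$ topology, there are $x_1, \dotsc, x_n \in X$ and open subsets $U_1, \dotsc, U_n$ of $\R$ with $\psi \in \bigcap_{k = 1}^{n} \ev_{x_k}^{-1}(U_k) \subseteq O$. For each $k$ we have $\psi(x_k) \in U_k$, and as $U_k$ is open in the metric space $\R$ there is an $\varepsilon_k \in \R_{> 0}$ with $(\psi(x_k) - \varepsilon_k, \psi(x_k) + \varepsilon_k) \subseteq U_k$. Putting $F = \set{x_1, \dotsc, x_n}$ and $\varepsilon = \min\set{\varepsilon_1, \dotsc, \varepsilon_n} \in \R_{> 0}$ gives $B(\psi, F, \varepsilon) \subseteq \bigcap_{k = 1}^{n} \ev_{x_k}^{-1}(U_k) \subseteq O$, which completes the argument.

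The only point that leans on background beyond the excerpt is the use of a subbase for an initial topology. If one prefers to stay within the tools developed here, one can instead note that the injection $\iota \from X^* \to \R^X$, $\psi \mapsto \family{\psi(x)}_{x \in X}$, satisfies $\ev_x = \pi_x \after \iota$ for each $x \in X$, so the weak-$*$ topology agrees with the subspace topology induced by the product topology on $\R^X$; one then intersects the base of the product topology recalled after its definition with $\iota(X^*)$ and shrinks the finitely many non-trivial factors to symmetric intervals, again obtaining the sets $B(\psi, F, \varepsilon)$. Either way the argument is routine point-set topology, and I do not expect a genuine obstacle; the most one has to be careful about is exactly which general fact about initial topologies one cites.
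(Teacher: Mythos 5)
Your argument is correct. Note that the paper states this lemma with a terminal \qed and supplies no proof at all, so there is nothing to compare against; your write-up simply fills in the omitted routine verification. Both halves check out: the identity $B(\psi, F, \varepsilon) = \bigcap_{x \in F} \ev_x^{-1}\bigl((\psi(x) - \varepsilon, \psi(x) + \varepsilon)\bigr)$ shows these sets are open neighbourhoods of $\psi$, and the subbase description of the initial topology with respect to $\family{\ev_x}_{x \in X}$ (equivalently, the embedding into $\R^X$ with the product topology, whose base is recalled in the appendix) yields the reverse containment; the only cosmetic omissions are the degenerate cases $F = \emptyset$ and $n = 0$, which are handled by taking $\varepsilon = 1$.
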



  \begin{corollary}
    Let $X^*$ be equipped with the weak-$*$ topology. The space $X^*$ is locally convex. \qed
  \end{corollary}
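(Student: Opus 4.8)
The plan is to read off a neighbourhood base of the origin from the preceding lemma and to check that each of its members is convex. Applying that lemma to the point $\psi = 0 \in X^*$, the sets
$B(0, F, \varepsilon) = \set{\psi' \in X^* \suchthat \ForEach x \in F \Holds \abs{\psi'(x)} < \varepsilon}$,
for $F \subseteq X$ finite and $\varepsilon \in \R_{> 0}$, form a neighbourhood base of $0$ in $X^*$ equipped with the weak-$*$ topology. So, by the definition of \emph{locally convex}, it suffices to show that every such $B(0, F, \varepsilon)$ is convex.

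For convexity, fix $F \subseteq X$ finite, $\varepsilon \in \R_{> 0}$, elements $\psi_1, \psi_2 \in B(0, F, \varepsilon)$, and $t \in [0,1]$. Then $t \psi_1 + (1 - t) \psi_2 \in X^*$, and for each $x \in F$ the triangle inequality together with $\abs{\psi_1(x)} < \varepsilon$ and $\abs{\psi_2(x)} < \varepsilon$ gives $\abs{(t \psi_1 + (1 - t) \psi_2)(x)} \leq t \abs{\psi_1(x)} + (1 - t) \abs{\psi_2(x)} < t \varepsilon + (1 - t) \varepsilon = \varepsilon$. Hence $t \psi_1 + (1 - t) \psi_2 \in B(0, F, \varepsilon)$, so $B(0, F, \varepsilon)$ is convex. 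Therefore the origin has a neighbourhood base of convex sets, which is precisely the claim that $X^*$ is locally convex.

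I do not expect any genuine obstacle: the only computational content is the one triangle-inequality estimate, and the rest is an immediate appeal to the preceding lemma and to the definitions of \emph{convex} and \emph{locally convex}. (The same estimate centred at an arbitrary $\psi \in X^*$ shows that each basic weak-$*$ neighbourhood $B(\psi, F, \varepsilon)$ is convex, not merely those at the origin, but this stronger fact is not needed here.)
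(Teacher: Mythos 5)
Your proof is correct and is exactly the argument the paper intends (the corollary is stated with no written proof precisely because it follows from the preceding lemma on the neighbourhood base $B(\psi, F, \varepsilon)$ together with the convexity of those sets via the triangle inequality). Nothing to add.
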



  \begin{lemma}
  \label{lem:weak-star-topology-is-Hausdorff}
    The space $X^*$, equipped with the weak-$*$ topology, is Hausdorff.
  \end{lemma}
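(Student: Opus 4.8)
The plan is to use the defining property of the weak-$*$ topology as the initial topology with respect to the family of evaluation maps $\family{\ev_x}_{x \in X}$, together with the elementary fact that distinct elements of $X^*$, being distinct functions on $X$, are separated by some point of $X$.

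First I would take two distinct elements $\psi$ and $\psi'$ of $X^*$. Since $\psi \neq \psi'$, there is an element $x \in X$ with $\psi(x) \neq \psi'(x)$. The real numbers $\psi(x)$ and $\psi'(x)$ are distinct, and $\R$ is Hausdorff, so there are disjoint open subsets $U$ and $U'$ of $\R$ with $\psi(x) \in U$ and $\psi'(x) \in U'$. Next, because the weak-$*$ topology is initial with respect to $\family{\ev_x}_{x \in X}$, the map $\ev_x$ is continuous, hence $\ev_x^{-1}(U)$ and $\ev_x^{-1}(U')$ are open subsets of $X^*$ equipped with the weak-$*$ topology. They contain $\psi$ and $\psi'$ respectively, and they are disjoint because $U \cap U' = \emptyset$ and $\ev_x^{-1}$ preserves disjointness. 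Thus $\psi$ and $\psi'$ admit disjoint open neighbourhoods, so $X^*$ is Hausdorff. Equivalently, one could invoke the neighbourhood-base description by the sets $B(\psi, F, \varepsilon)$ stated above, with $F = \set{x}$ and $\varepsilon = \abs{\psi(x) - \psi'(x)} / 2$, whose disjointness is immediate from the triangle inequality.

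I do not expect a genuine obstacle here; the only points that need to be registered are that the evaluation maps separate the points of $X^*$ (immediate from the definition of $X^*$ as a set of real-valued functions on $X$) and that each $\ev_x$ is continuous for the weak-$*$ topology (built into its definition as an initial topology). No appeal to the norm structure of $X$ or to any duality or extension theorem is required.
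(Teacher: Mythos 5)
Your argument is correct and is exactly the standard one: the evaluation maps $\ev_x$ separate the points of $X^*$, each is continuous for the weak-$*$ topology by definition of an initial topology, and $\R$ is Hausdorff, so preimages of disjoint open sets around $\psi(x)$ and $\psi'(x)$ separate $\psi$ and $\psi'$. The paper itself only cites the corresponding passage of Ceccherini-Silberstein and Coornaert, and the argument given there is the same one, so there is nothing to add.
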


  \begin{proof}
    Confer last paragraph of Sect.~F.2 in \enquote{Cellular Automata and Groups}\cite{ceccherini-silberstein:coornaert:2010}. \qed
  \end{proof}

  \begin{theorem}[Stefan Banach, 1932; Leonidas Alaoglu, 1940]
  \label{thm:banach-alaoglu}
    Let $X^*$ be equipped with the weak-$*$ topology. The unit ball $\set{\psi \in X^* \suchthat \norm{\psi}_{X^*} \leq 1}$, equipped with the subspace topology, is compact.
  \end{theorem}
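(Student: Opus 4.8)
The plan is to realise the unit ball $B = \set{\psi \in X^* \suchthat \norm{\psi}_{X^*} \leq 1}$ as a closed subspace of a product of compact intervals and then apply Tychonoff's theorem (Theorem~\ref{thm:tychonoff}). For each $z \in X$ the interval $[-\norm{z}, \norm{z}]$ is a compact subset of $\R$, so by Theorem~\ref{thm:tychonoff} the set $P = \prod_{z \in X} [-\norm{z}, \norm{z}]$, equipped with the product topology, is compact. Since $\abs{\psi(z)} \leq \norm{\psi}_{X^*} \norm{z} \leq \norm{z}$ for every $\psi \in B$ and every $z \in X$, the map $\Theta \from B \to P$, $\psi \mapsto \family{\psi(z)}_{z \in X}$, is well-defined, and it is plainly injective.

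First I would check that $\Theta$ is a homeomorphism from $B$, with the subspace topology induced by the weak-$*$ topology, onto $\Theta(B)$, with the subspace topology induced by the product topology. Indeed, by the net characterisation of the weak-$*$ topology, a net $\net{\psi_i}_{i \in I}$ in $B$ converges to $\psi \in B$ if and only if $\net{\psi_i(z)}_{i \in I}$ converges to $\psi(z)$ for every $z \in X$, that is, if and only if $\net{\Theta(\psi_i)}_{i \in I}$ converges to $\Theta(\psi)$ in $P$; since a topology is determined by its convergent nets, $\Theta$ is a homeomorphism onto its image. It therefore suffices to show that $\Theta(B)$ is closed in $P$: being then a closed subset of the compact space $P$, it is compact, and hence so is the homeomorphic space $B$.

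The heart of the matter is to identify $\Theta(B)$, and I claim it is exactly the set of tuples $\family{t_z}_{z \in X} \in P$ satisfying $t_{z + z'} = t_z + t_{z'}$ for all $z, z' \in X$ and $t_{\lambda z} = \lambda t_z$ for all $\lambda \in \R$ and all $z \in X$. The inclusion from left to right is immediate from linearity of the elements of $X^*$; conversely, any such tuple defines a linear map $\psi \from X \to \R$, $z \mapsto t_z$, with $\abs{\psi(z)} = \abs{t_z} \leq \norm{z}$, so $\psi$ is continuous with $\norm{\psi}_{X^*} \leq 1$, whence $\psi \in B$ and $\Theta(\psi) = \family{t_z}_{z \in X}$. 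Now, for fixed $z, z' \in X$ the map $P \to \R$, $\family{t_w}_{w \in X} \mapsto t_{z + z'} - t_z - t_{z'}$, depends on only the three coordinates indexed by $z + z'$, $z$ and $z'$, hence is continuous; likewise $\family{t_w}_{w \in X} \mapsto t_{\lambda z} - \lambda t_z$ is continuous for fixed $\lambda$ and $z$. Thus $\Theta(B)$ is an intersection of preimages of $\set{0}$ under continuous maps, hence an intersection of closed sets, hence closed in $P$. The one real subtlety is precisely this last step --- seeing that each linearity equation carves out a closed subset of the product because it involves only finitely many coordinates; Tychonoff's theorem, the stability of compactness under passage to closed subspaces and under homeomorphisms, and the net characterisation of the weak-$*$ topology are all at hand from the preceding material.
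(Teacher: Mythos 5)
Your proof is correct and is the standard Tychonoff argument for Banach--Alaoglu: embed the unit ball into $\prod_{z \in X} [-\norm{z}, \norm{z}]$, observe that the image is cut out by the (finitely-supported, hence continuous) linearity constraints and is therefore closed, and transport compactness back through the homeomorphism given by the net characterisation of the weak-$*$ topology. The paper itself offers no argument here --- it only cites Theorem~F.3.1 of Ceccherini-Silberstein and Coornaert --- and the proof given there is essentially the one you wrote, so there is nothing to add.
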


  \begin{proof}
    See Theorem~F.3.1 in \enquote{Cellular Automata and Groups}\cite{ceccherini-silberstein:coornaert:2010}. \qed
  \end{proof}

  \section{Hall's Theorems}
  \label{apx:hall}

  The theory concerning Hall's theorems as presented here may be found in more detail in Appendix~H in the monograph \enquote{Cellular Automata and Groups}\cite{ceccherini-silberstein:coornaert:2010}.

  \begin{definition}
    Let $X$ and $Y$ be two sets, and let $E$ be a subset of $X \times Y$. The triple $(X, Y, E)$ is called \define{bipartite graph}\graffito{bipartite graph $(X, Y, E)$}, each element $x$ of $X$ is called \define{left vertex}\graffito{left vertex $x$}, each element $y$ of $Y$ is called \define{right vertex}\graffito{right vertex $y$}, and each element $e$ of $E$ is called \define{edge}\graffito{edge $e$}.
  \end{definition}

  \begin{definition}
    Let $(X, Y, E)$ and $(X', Y', E')$ be two bipartite graphs. The graph $(X, Y, E)$ is called \define{bipartite subgraph of $(X', Y', E')$}\graffito{bipartite subgraph $(X, Y, E)$ of $(X', Y', E')$} if, and only if, $X \subseteq X'$, $Y \subseteq Y'$, and $E \subseteq E'$.
  \end{definition}

  \begin{definition} 
    Let $(X, Y, E)$ be a bipartite graph, and let $(x, y)$ and $(x', y')$ be two elements of $E$. The edges $(x, y)$ and $(x', y')$ are called \define{adjacent}\graffito{adjacent} if, and only if, $x = x'$ or $y = y'$.
  \end{definition}

  \begin{definition}
    Let $(X, Y, E)$ be a bipartite graph.
    \begin{enumerate}
      \item Let $x$ be an element of $X$. The set
            \begin{equation*}
              \mathcal{N}_r(x) = \set{y \in Y \suchthat (x, y) \in E} \mathnote{right neighbourhood $\mathcal{N}_r(x)$ of $x$}
            \end{equation*}
            is called \define{right neighbourhood of $x$}.
      \item Let $A$ be a subset of $X$. The set $\bigcup_{a \in A} \mathcal{N}_r(a)$ is called \define{right neighbourhood of $A$}\graffito{right neighbourhood $\mathcal{N}_r(A)$ of $A$} and is denoted by $\mathcal{N}_r(A)$. 
      \item Let $y$ be an element of $Y$. The set
            \begin{equation*}
              \mathcal{N}_l(y) = \set{x \in X \suchthat (x, y) \in E} \mathnote{left neighbourhood $\mathcal{N}_l(y)$ of $y$}
            \end{equation*}
            is called \define{left neighbourhood of $y$}.
      \item Let $B$ be a subset of $Y$. The set $\bigcup_{b \in B} \mathcal{N}_l(b)$ is called \define{left neighbourhood of $B$}\graffito{left neighbourhood $\mathcal{N}_l(B)$ of $B$} and is denoted by $\mathcal{N}_l(B)$.
    \end{enumerate}
  \end{definition}

  \begin{definition}
    Let $(X, Y, E)$ be a bipartite graph. It is called
    \begin{enumerate}
      \item \define{finite}\graffito{finite} if, and only if, the sets $X$ and $Y$ are finite.
      \item \define{locally finite}\graffito{locally finite} if, and only if, for each element $x$ of $X$, the set $\mathcal{N}_r(x)$ is finite, and for each element $y$ of $Y$, the set $\mathcal{N}_l(y)$ is finite.
    \end{enumerate}
  \end{definition}

  \begin{remark}
    Let $(X, Y, E)$ be a locally finite bipartite graph. Then, for each finite subset $A$ of $X$, the set $\mathcal{N}_r(A)$ is finite; and, for each finite subset $B$ of $Y$, the set $\mathcal{N}_l(B)$ is finite.
  \end{remark}


  \begin{definition}
    Let $(X, Y, E)$ be a bipartite graph and let $M$ be a subset of $E$. The set $M$ is called \define{matching}\graffito{matching $M$} if, and only if, for each element $(e, e')$ of $M \times M$ with $e \neq e'$, the edges $e$ and $e'$ are non-adjacent. 
  \end{definition}

  \begin{definition}
    Let $(X, Y, E)$ be a bipartite graph and let $M$ be a matching. The matching $M$ is called
    \begin{enumerate}
      \item \define{left-perfect}\graffito{left-perfect} if, and only if,
            \begin{equation*}
              \ForEach x \in X \Exists y \in Y \SuchThat (x, y) \in M; 
            \end{equation*}
      \item \define{right-perfect}\graffito{right-perfect} if, and only if,
            \begin{equation*}
              \ForEach y \in Y \Exists x \in X \SuchThat (x, y) \in M; 
            \end{equation*}
      \item \define{perfect}\graffito{perfect} if, and only if, it is left-perfect and right-perfect.
    \end{enumerate}
  \end{definition}


  \begin{definition}
    Let $(X, Y, E)$ be a locally finite bipartite graph. Is is said to satisfy the 
    \begin{enumerate}
      \item \define{left Hall condition}\graffito{left Hall condition} if, and only if, 
            \begin{equation*}
              \ForEach A \subseteq X \text{ finite} \Holds \abs{\mathcal{N}_r(A)} \geq \abs{A};
            \end{equation*}
      \item \define{right Hall condition}\graffito{right Hall condition} if, and only if,
            \begin{equation*}
              \ForEach B \subseteq Y \text{ finite} \Holds \abs{\mathcal{N}_l(B)} \geq \abs{B};
            \end{equation*}
      \item \define{Hall marriage conditions}\graffito{Hall marriage conditions} if, and only if, it satisfies the left and right Hall conditions.
    \end{enumerate}
  \end{definition}

  \begin{theorem}
    Let $(X, Y, E)$ be a locally finite bipartite graph. It satisfies the left or right Hall condition if, and only if, there is a left- or right-perfect matching, respectively.
  \end{theorem}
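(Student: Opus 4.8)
The plan is to prove the \enquote{left} version --- that the left Hall condition holds if and only if there is a left-perfect matching --- and to obtain the \enquote{right} version from it by exchanging the roles of $X$ and $Y$, which turns $\mathcal{N}_r$ into $\mathcal{N}_l$, turns left-perfect into right-perfect, and preserves local finiteness. The easy implication is immediate: if $M \subseteq E$ is a left-perfect matching, then sending each $x \in X$ to the unique $y$ with $(x, y) \in M$ defines an injection $X \to Y$, because no two edges of $M$ are adjacent; its restriction to a finite $A \subseteq X$ is an injection into $\mathcal{N}_r(A)$, so $\abs{\mathcal{N}_r(A)} \geq \abs{A}$.

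For the converse I would first record the classical finite case: if $X$ and $Y$ are finite and the left Hall condition holds, one inducts on $\abs{X}$. In the inductive step, either every non-empty proper subset $A \subsetneq X$ satisfies $\abs{\mathcal{N}_r(A)} > \abs{A}$ --- in which case one matches an arbitrary $x$ to some $y \in \mathcal{N}_r(x)$ and checks that deleting $x$ and $y$ preserves the left Hall condition --- or some non-empty proper $A \subsetneq X$ is \emph{tight}, that is $\abs{\mathcal{N}_r(A)} = \abs{A}$, in which case one splits into the subgraphs induced on $A \cup \mathcal{N}_r(A)$ and on $(X \smallsetminus A) \cup (Y \smallsetminus \mathcal{N}_r(A))$, verifies the left Hall condition for each of them, matches both by induction, and takes the union. (This routine argument could also simply be cited.)

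The main step is to lift the finite case to the locally finite one by compactness. The left Hall condition applied to singletons gives $\mathcal{N}_r(x) \neq \emptyset$ for each $x \in X$. Equip each finite set $\mathcal{N}_r(x)$ with the discrete topology and let $P = \prod_{x \in X} \mathcal{N}_r(x)$ carry the product topology; by Corollary~\ref{cor:tychonoff} the space $P$ is compact. An element $f \in P$ is a choice function with $f(x) \in \mathcal{N}_r(x)$, and $\set{(x, f(x)) \suchthat x \in X}$ is a left-perfect matching precisely when $f$ is injective. For each finite $A \subseteq X$ put
\begin{equation*}
  D_A = \set{f \in P \suchthat f\restriction_A \text{ is injective}}.
\end{equation*}
Since membership in $D_A$ depends only on the finitely many coordinates indexed by $A$, each $D_A$ is clopen, hence closed, in $P$. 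It is non-empty, because the bipartite subgraph induced on $A \cup \mathcal{N}_r(A)$ is finite and inherits the left Hall condition, so by the finite case it has a left-perfect matching --- equivalently, an injection $A \to \mathcal{N}_r(A)$ sending each $a$ to some element of $\mathcal{N}_r(a)$ --- and any extension of this injection to a choice function on all of $X$ lies in $D_A$. For finitely many finite sets $A_1, \dots, A_n$ one has $D_{A_1 \cup \dots \cup A_n} \subseteq \bigcap_{k = 1}^{n} D_{A_k}$, so the closed sets $D_A$ have the finite intersection property; by the finite intersection property characterisation of compactness there is an $f \in \bigcap_{A} D_A$, where $A$ ranges over the finite subsets of $X$. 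Such an $f$ is injective --- for distinct $x, x' \in X$ we have $f \in D_{\set{x, x'}}$, hence $f(x) \neq f(x')$ --- and therefore $\set{(x, f(x)) \suchthat x \in X}$ is a left-perfect matching.

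I expect the main obstacle to be not a single deep idea but the careful bookkeeping of the compactness step --- verifying that each $D_A$ is closed and that the finite intersection property holds --- together with the classical finite Hall theorem, which must either be carried out as indicated or invoked.
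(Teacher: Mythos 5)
Your proposal is correct. Note, however, that the paper itself gives no proof of this statement: it simply defers to Theorem~H.3.2 of Ceccherini-Silberstein and Coornaert's \enquote{Cellular Automata and Groups}, so there is nothing in the paper to compare against step by step. What you supply is the standard argument and, in substance, the one found in that reference: the classical finite Hall theorem (by the Halmos--Vaughan induction on $\abs{X}$, splitting on whether a tight set exists), lifted to the locally finite case by a compactness argument on the product $\prod_{x \in X} \mathcal{N}_r(x)$ of the finite, non-empty right neighbourhoods, using Corollary~\ref{cor:tychonoff} and the finite intersection property of the clopen sets $D_A$. All the points that need checking are in place: local finiteness is exactly what makes each factor finite and each induced subgraph on $A \cup \mathcal{N}_r(A)$ finite; the induced subgraph inherits the left Hall condition because $\mathcal{N}_r(A') \subseteq \mathcal{N}_r(A)$ for $A' \subseteq A$; the monotonicity $D_{A_1 \cup \dotsb \cup A_n} \subseteq \bigcap_{k} D_{A_k}$ gives the finite intersection property; and injectivity of a limit choice function $f$ follows from $f \in D_{\set{x, x'}}$. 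The reduction of the right version to the left version by exchanging $X$ and $Y$ is likewise unobjectionable. The only deliberate gap is the finite Hall theorem itself, which you flag and which may legitimately be cited.
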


  \begin{proof}
    See Theorem~H.3.2 in \enquote{Cellular Automata and Groups}\cite{ceccherini-silberstein:coornaert:2010}. \qed
  \end{proof}


  \begin{theorem}
    Let $(X, Y, E)$ be a bipartite graph such, that there is a left-perfect matching and there is a right-perfect matching. There is a perfect matching.
  \end{theorem}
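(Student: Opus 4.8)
The plan is to run the classical symmetric-difference argument (the bipartite analogue of the Cantor--Bernstein construction), taking care that the graph may be infinite. Let $M$ be a left-perfect and $N$ a right-perfect matching, and let $H$ be the subgraph of $(X, Y, E)$ whose edge set is the symmetric difference $P = (M \smallsetminus N) \cup (N \smallsetminus M)$. Since $M$ and $N$ are matchings, each vertex meets at most one edge of $M \smallsetminus N$ and at most one of $N \smallsetminus M$, so every vertex has degree at most $2$ in $H$; hence the components of $H$ are finite paths, finite cycles (of even length, as the graph is bipartite), one-sided infinite paths (rays), and two-sided infinite paths, and along each component the edges alternate between $M \smallsetminus N$ and $N \smallsetminus M$.

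First I would record two easy facts. (i) If $(x, y) \in M \cap N$, then neither $x$ nor $y$ meets an edge of $P$: the matching property makes $(x, y)$ the unique $M$-edge and the unique $N$-edge at $x$, and likewise at $y$, whereas $(x, y) \notin P$. (ii) If $v \in X$ is an endpoint of a path or ray component of $H$ (so $v$ has degree $1$ in $H$), then its incident $H$-edge lies in $M \smallsetminus N$: by left-perfectness $v$ has an $M$-edge $f$; were $f \in N$, we would have $f \in M \cap N$ and $v$ isolated in $H$ by (i), a contradiction; so $f \in M \smallsetminus N \subseteq P$, and as $v$ has degree $1$ in $H$ this $f$ is the incident $H$-edge. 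Symmetrically, if $v \in Y$ is such an endpoint, then its incident $H$-edge lies in $N \smallsetminus M$, using right-perfectness of $N$.

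The crucial structural step is that $H$ has no finite path component. Suppose $v_0 - v_1 - \dotsb - v_k$ with $k \geq 1$ were one. Bipartiteness makes $v_0$ and $v_k$ lie in the same part if and only if $k$ is even, while the alternation of edges makes the first and the last edge lie in the same class ($M \smallsetminus N$ or $N \smallsetminus M$) if and only if $k$ is odd; matching this against fact (ii) applied to both endpoints gives a contradiction in every case (for instance, both endpoints in $X$ forces the first and last edge into $M \smallsetminus N$ while $k$ is even; a mixed pair forces them into different classes while $k$ is odd). Having excluded finite paths, I pick in each remaining component an alternating set of edges that covers all of its vertices: all the $M \smallsetminus N$ edges of a finite cycle; all the $M \smallsetminus N$ edges of a two-sided infinite path; all the $M \smallsetminus N$ edges of a ray whose endpoint lies in $X$ (which by (ii) includes the edge at that endpoint); and all the $N \smallsetminus M$ edges of a ray whose endpoint lies in $Y$. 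Let $L$ be the union of all these choices together with $M \cap N$.

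Finally I would check that $L$ is a perfect matching. It is a matching because within each component the selected edges are pairwise non-adjacent, distinct components are vertex-disjoint, and by (i) the edges of $M \cap N$ meet no vertex that lies on an edge of $P$. It is perfect because an arbitrary $v \in X \cup Y$ is either on an edge of $M \cap N$, hence covered, or, being covered by $M$ when $v \in X$ and by $N$ when $v \in Y$ but not lying on an $M \cap N$ edge, on some edge of $P$, hence in a component of $H$ whose selected edges cover all its vertices. I expect the finite-path exclusion to be the only genuine obstacle: the parity bookkeeping — which part each endpoint lies in, which class the extremal edges fall into, and how these interact — is where all the content sits and must be carried out carefully for the argument to close, while the remaining verifications are routine.
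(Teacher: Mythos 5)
Your proof is correct and complete: the symmetric-difference decomposition into alternating components, the parity argument excluding finite path components, and the componentwise selection of alternating edges together with $M \cap N$ all go through (the only components you omit from your list are isolated vertices, but your final coverage check handles every vertex directly, so nothing is lost). The paper itself gives no proof here --- it cites Theorem~H.3.4 of Ceccherini-Silberstein and Coornaert --- and your argument is essentially the standard one given in that reference, which analyses the components of $M \cup N$ in the same way.
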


  \begin{proof}
    See Theorem~H.3.4 in \enquote{Cellular Automata and Groups}\cite{ceccherini-silberstein:coornaert:2010}. \qed
  \end{proof}

  \begin{corollary}[Georg Ferdinand Ludwig Philipp Cantor, Friedrich Wilhelm Karl Ernst Schröder, Felix Bernstein]
    Let $X$ and $Y$ be two sets such, that there is an injective map $f$ from $X$ to $Y$ and there is an injective map $g$ from $Y$ to $X$. There is a bijective map from $X$ to $Y$.
  \end{corollary}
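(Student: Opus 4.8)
The plan is to encode both injections in a single bipartite graph and then extract a bijection from a perfect matching, mirroring the classical Banach argument. First I would form the bipartite graph $\mathcal{G} = \ntuple{X, Y, E}$ with edge set $E = \set{(x, f(x)) \suchthat x \in X} \cup \set{(g(y), y) \suchthat y \in Y}$.

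Next I would check that $M_l = \set{(x, f(x)) \suchthat x \in X}$ is a left-perfect matching: two distinct edges of this form cannot share their left vertex, and they cannot share their right vertex either, because $f$ is injective; and every $x \in X$ is the left endpoint of the edge $(x, f(x)) \in M_l$. Symmetrically, $M_r = \set{(g(y), y) \suchthat y \in Y}$ is a right-perfect matching, using injectivity of $g$. Hence $\mathcal{G}$ has both a left-perfect and a right-perfect matching, so by the preceding theorem it has a perfect matching $M \subseteq E$.

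Finally, from $M$ I would read off the bijection. Since $M$ is a matching that is left-perfect, every $x \in X$ is the left endpoint of exactly one edge of $M$, which defines a map $h \from X \to Y$ with $(x, h(x)) \in M$ for each $x \in X$. Right-perfectness of $M$ makes $h$ surjective, and the matching property makes $h$ injective, because any two edges of $M$ sharing a right vertex must be equal. Thus $h$ is a bijection from $X$ to $Y$.

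The argument is essentially bookkeeping, and I do not expect a genuine obstacle; the only place any hypothesis is used is in verifying that $M_l$ and $M_r$ are matchings, which is precisely where injectivity of $f$ and of $g$ enters, together with the observation that a perfect matching on a bipartite graph is nothing but the graph of a bijection between the two vertex sets.
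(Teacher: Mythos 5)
Your proof is correct and is exactly the intended argument: the paper defers to the cited reference, but the standard derivation there (and the reason the corollary is placed immediately after the theorem on left- and right-perfect matchings) is precisely your construction of the bipartite graph $\ntuple{X, Y, \set{(x, f(x)) \suchthat x \in X} \cup \set{(g(y), y) \suchthat y \in Y}}$ and the extraction of a bijection from the resulting perfect matching. No gaps.
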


  \begin{proof}
    See Theorem~H.3.5 in \enquote{Cellular Automata and Groups}\cite{ceccherini-silberstein:coornaert:2010}. \qed
  \end{proof}

  \begin{theorem}[Hall's marriage theorem, Philip Hall, 1935]
    Let $(X, Y, E)$ be a locally finite bipartite graph. It satisfies the Hall marriage conditions if, and only if, there is a perfect matching.
  \end{theorem}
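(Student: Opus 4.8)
The plan is to obtain Hall's marriage theorem as an immediate corollary of the two theorems that directly precede it, together with the definitions of the Hall marriage conditions and of a perfect matching. The key observation is purely definitional: a locally finite bipartite graph satisfies the Hall marriage conditions if and only if it satisfies both the left Hall condition and the right Hall condition, and a matching is perfect if and only if it is simultaneously left-perfect and right-perfect. So the biconditional to be proved splits into two short implications, each assembled from the earlier results.

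For the forward direction, assume $(X, Y, E)$ satisfies the Hall marriage conditions. It then satisfies the left Hall condition, so by the theorem characterising the left Hall condition via left-perfect matchings there is a left-perfect matching $M_l \subseteq E$; symmetrically, it satisfies the right Hall condition, so there is a right-perfect matching $M_r \subseteq E$. Applying the theorem that produces a perfect matching from a bipartite graph possessing both a left-perfect and a right-perfect matching (with the graph here being $(X, Y, E)$ itself), we conclude that $(X, Y, E)$ has a perfect matching.

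For the backward direction, assume $(X, Y, E)$ has a perfect matching $M$. Then $M$ is in particular left-perfect, so, applying the first preceding theorem in the direction from matchings to Hall conditions, $(X, Y, E)$ satisfies the left Hall condition; and $M$ is right-perfect, so the same theorem yields the right Hall condition. Hence $(X, Y, E)$ satisfies the Hall marriage conditions, completing the equivalence.

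I expect no real obstacle here: the genuine combinatorics — the deficiency-style argument behind the characterisation theorem and the Cantor--Schr\"oder--Bernstein-style gluing behind the combination theorem — is already encapsulated in the cited statements. The only points requiring care are bookkeeping: checking that local finiteness, which is a standing hypothesis of the characterisation theorem, is available at each invocation (it is, since it is assumed of $(X, Y, E)$ and no step passes to a graph on which it could fail), and correctly unpacking the ``respectively'' in the characterisation theorem into its two separate biconditionals, one for the left side and one for the right side.
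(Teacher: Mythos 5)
Your proof is correct: the paper itself gives no argument for this statement beyond a citation to Theorem~H.3.6 of Ceccherini-Silberstein and Coornaert, and your derivation --- splitting the marriage conditions into the left and right Hall conditions, invoking the preceding characterisation theorem in both directions, and gluing the resulting left- and right-perfect matchings via the preceding combination theorem --- is precisely the standard deduction used in that source.
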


  \begin{proof}
    See Theorem~H.3.6 in \enquote{Cellular Automata and Groups}\cite{ceccherini-silberstein:coornaert:2010}. \qed
  \end{proof}


  \begin{definition}
    Let $X$ and $Y$ be two sets, and let $f$ be a surjective map from $X$ to $Y$. The map $f$ is called \define{$k$-to-$1$}\graffito{$k$-to-$1$} if, and only if,
    \begin{equation*}
      \ForEach y \in Y \Holds \abs{f^{-1}(y)} = k.
    \end{equation*}
  \end{definition}

  \begin{definition}
    Let $(X, Y, E)$ be a bipartite graph, let $k$ be a positive integer, and let $M$ be a subset of $E$. The set $M$ is called \define{perfect $(1,k)$-matching}\graffito{perfect $(1,k)$-matching} if, and only if,
    \begin{equation*}
      \ForEach x \in X \Holds \abs{\set{y \in Y \suchthat (x, y) \in E}} = k
    \end{equation*}
    and
    \begin{equation*}
      \ForEach y \in Y \Holds \abs{\set{x \in X \suchthat (x, y) \in E}} = 1.
    \end{equation*}
  \end{definition}

  \begin{remark}
    The set $M$ is a perfect $(1,k)$-matching if, and only if, there is a $k$-to-$1$ surjective map $\psi \from Y \to X$ such, that $\set{(\psi(y), y) \suchthat y \in Y} = M$.
  \end{remark}

  \begin{remark} 
    The set $M$ is a perfect $(1,1)$-matching if, and only if, it is a perfect matching.
  \end{remark}

  \begin{definition}
    Let $(X, Y, E)$ be a locally finite bipartite graph and let $k$ be a positive integer. The graph $(X, Y, E)$ is said to satisfy the \define{Hall $k$-harem conditions}\graffito{Hall $k$-harem conditions} if, and only if, for each finite subset $A$ of $X$, we have $\abs{\mathcal{N}_r(A)} \geq k \abs{A}$, and for each finite subset $B$ of $Y$, we have $\abs{\mathcal{N}_l(B)} \geq k^{-1} \abs{B}$.
  \end{definition}

  \begin{theorem}[Hall's harem theorem, Philip Hall] 
  \label{thm:hall-harem}
    Let $(X, Y, E)$ be a locally finite bipartite graph and let $k$ be a positive integer. The graph $(X, Y, E)$ satisfies the Hall $k$-harem conditions if, and only if, there is a perfect $(1,k)$-matching.
  \end{theorem}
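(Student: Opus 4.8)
The plan is to reduce the harem theorem to Hall's marriage theorem (already stated above) by a vertex-splitting construction. Given the locally finite bipartite graph $(X, Y, E)$ and the positive integer $k$, form the \textbf{blow-up graph} $\mathcal{G}' = (X', Y, E')$ with $X' = X \times \set{1, \dotsc, k}$ and $E' = \set{((x, i), y) \suchthat (x, y) \in E,\ i \in \set{1, \dotsc, k}}$; that is, each left vertex $x$ is replaced by $k$ clones, each inheriting all edges of $x$. First I would check that $\mathcal{G}'$ is again locally finite: for $(x, i) \in X'$ the right neighbourhood equals $\mathcal{N}_r(x)$, which is finite, and for $y \in Y$ the left neighbourhood of $y$ in $\mathcal{G}'$ is $\mathcal{N}_l(y) \times \set{1, \dotsc, k}$, finite because $\mathcal{N}_l(y)$ is.

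The core step is the equivalence: $(X, Y, E)$ satisfies the Hall $k$-harem conditions if and only if $\mathcal{G}'$ satisfies the Hall marriage conditions. For the left condition, given a finite $A' \subseteq X'$ with projection $A = \set{x \in X \suchthat \Exists i \SuchThat (x, i) \in A'}$ one has $\abs{A'} \leq k \abs{A}$ and $\mathcal{N}_r(A') = \mathcal{N}_r(A)$, so $\abs{\mathcal{N}_r(A)} \geq k \abs{A}$ implies $\abs{\mathcal{N}_r(A')} \geq \abs{A'}$; conversely, applying the left Hall condition of $\mathcal{G}'$ to $A' = A \times \set{1, \dotsc, k}$ recovers $\abs{\mathcal{N}_r(A)} \geq k \abs{A}$. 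For the right condition, the left neighbourhood in $\mathcal{G}'$ of a finite $B \subseteq Y$ has cardinality $k \abs{\mathcal{N}_l(B)}$, so $\abs{\mathcal{N}_l(B)} \geq k^{-1} \abs{B}$ is literally the statement $\abs{\mathcal{N}_l^{\mathcal{G}'}(B)} \geq \abs{B}$. Hence the two condition sets are interchangeable.

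It then remains to translate matchings. A perfect matching $M'$ of $\mathcal{G}'$ yields $M = \set{(x, y) \suchthat \Exists i \SuchThat ((x, i), y) \in M'}$: right-perfection together with the matching property forces each $y \in Y$ to be incident in $M'$ to exactly one pair, hence to a unique $x$ in $M$; and for fixed $x$ the clones $(x, 1), \dotsc, (x, k)$ are matched to $k$ pairwise distinct elements of $Y$, since two clones sharing a partner would give adjacent edges of $M'$; so $x$ has exactly $k$ neighbours in $M$, which is precisely a perfect $(1,k)$-matching by the characterisation in the remark following its definition. Conversely a perfect $(1,k)$-matching of $(X, Y, E)$ lifts to a perfect matching of $\mathcal{G}'$ by assigning, for each $x$, its $k$ partners bijectively to the $k$ clones of $x$. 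Combining: by Hall's marriage theorem $\mathcal{G}'$ has a perfect matching if and only if it satisfies the Hall marriage conditions, which by the equivalence holds if and only if $(X, Y, E)$ satisfies the Hall $k$-harem conditions, and the translations show that the existence of a perfect matching of $\mathcal{G}'$ is equivalent to the existence of a perfect $(1,k)$-matching of $(X, Y, E)$.

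I do not expect a serious obstacle. The construction is entirely routine once the marriage theorem is available; the only points demanding care are that the blow-up preserves local finiteness — which genuinely uses local finiteness of the original graph on the $Y$-side — and the verification that the back-translation of $M'$ satisfies the quantitative $(1,k)$ requirements and not merely that it is a set of edges. Both reduce to the disjointness facts about $M'$ recorded above.
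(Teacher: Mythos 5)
Your proposal is correct, and it is essentially the standard proof: the paper itself only cites Theorem~H.4.2 of Ceccherini-Silberstein and Coornaert, whose argument is exactly this clone construction (replace each $x \in X$ by $k$ copies, check that the $k$-harem conditions for $(X, Y, E)$ translate into the marriage conditions for the blow-up, and translate matchings back and forth). All the delicate points you flag --- preservation of local finiteness, the fact that two clones of the same $x$ matched to the same $y$ would be adjacent edges and hence cannot both lie in a matching, and the count showing each $x$ receives exactly $k$ distinct partners --- are handled correctly, so the reduction to the already-stated marriage theorem is complete.
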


  \begin{proof}
    See Theorem~H.4.2 in \enquote{Cellular Automata and Groups}\cite{ceccherini-silberstein:coornaert:2010}. \qed
  \end{proof}

  \section{Zorn's Lemma}
  \label{apx:zorn}


  \begin{definition}
    Let $\leq$ be a preorder on $I$. It is called \define{partial order on $I$}\graffito{partial order $\leq$ on $I$} and the preordered set $(I, \leq)$ is called \define{partially ordered set}\graffito{partially ordered set $(I, \leq)$} if, and only if the relation $\leq$ is antisymmetric. 
  \end{definition}

  \begin{definition}
    Let $\leq$ be a partial order on $I$. It is called \define{total order on $I$}\graffito{total order $\leq$ on $I$} and the partially ordered set $(I, \leq)$ is called \define{totally ordered set}\graffito{totally ordered set $(I, \leq)$} if, and only if the relation $\leq$ is total. 
  \end{definition}

  \begin{definition} 
    Let $\leq$ be a preorder on $I$ and let $i$ be an element of $I$. The element $i$ is called \define{maximal in $(I, \leq)$}\graffito{maximal in $(I, \leq)$} if, and only if
    \begin{equation*}
      \ForEach i' \in I \Holds (i' \geq i \implies i' \leq i).
    \end{equation*}
  \end{definition}

  \begin{definition}
    Let $\leq$ be a preorder on $I$ and let $J$ be a subset of $I$. The set $J$ is called \define{chain in $(I, \leq)$}\graffito{chain in $(I, \leq)$} if, and only if the restriction of $\leq$ to $J$ is a total order on $J$. 
  \end{definition}

  \begin{lemma}[Zorn's Lemma]
  \label{lem:zorns-lemma}
    Let $(I, \leq)$ be a preordered set such that each chain in $I$ has an upper bound. Then, $I$ has a maximal element. \qed
  \end{lemma}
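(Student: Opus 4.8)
The statement is Zorn's Lemma, which is equivalent to the Axiom of Choice, so the plan is to \emph{derive} it from the Axiom of Choice by a transfinite construction, arguing by contradiction. Two points of the paper's own vocabulary must be kept in view. First, maximality here is the preorder notion: $i$ is maximal exactly when no $v$ satisfies both $v \geq i$ and $v \not\leq i$. Second, the paper's notion of \emph{chain} demands that $\leq$ restricted to the subset be a total order, hence in particular antisymmetric; this is harmless because the chains I construct will be strictly increasing and therefore antisymmetric for free.

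Suppose, for contradiction, that $I$ has no maximal element. The first step is to upgrade ordinary upper bounds to \emph{strict} ones. Given a chain $C$, the hypothesis provides an upper bound $u$. Since $u$ is not maximal, there is a $v \in I$ with $v \geq u$ and $v \not\leq u$, and this $v$ is a strict upper bound of $C$ in the sense that $c \leq v$ for every $c \in C$ while $v \not\leq c$ for every $c \in C$: indeed $c \leq u \leq v$ by transitivity, and if $v \leq c$ held for some $c \in C$ then $v \leq c \leq u$ would force $v \leq u$, a contradiction. By the Axiom of Choice, fix a function $f$ assigning to each chain $C$ in $I$ such a strict upper bound $f(C)$.

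The second step is the transfinite recursion. To keep the construction within sets, let $\kappa$ be the Hartogs number of $I$, i.e. the least ordinal admitting no injection into $I$ (it exists with no appeal to choice). Define $a_\alpha$ for $\alpha < \kappa$ by $a_\alpha = f(D_\alpha)$, where $D_\alpha = \set{a_\beta \suchthat \beta < \alpha}$. By transfinite induction one shows that for every $\alpha \leq \kappa$ the set $D_\alpha$ is strictly increasing, meaning $\beta < \gamma$ implies $a_\beta \leq a_\gamma$ and $a_\gamma \not\leq a_\beta$; such a $D_\alpha$ is totally ordered and antisymmetric, hence a chain in the paper's sense, so $f(D_\alpha)$ is defined, and by the first step $a_\alpha$ then lies strictly above every earlier $a_\beta$. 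Because the recipe $a_\alpha = f(D_\alpha)$ treats successor and limit stages uniformly, this single inductive claim propagates through all $\alpha < \kappa$.

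Finally, the strict increase makes $\alpha \mapsto a_\alpha$ injective on $\kappa$: if $\beta < \alpha$ and $a_\alpha = a_\beta$, then $a_\alpha \leq a_\beta$ by reflexivity, contradicting $a_\alpha \not\leq a_\beta$; so the $a_\alpha$ are pairwise distinct. This exhibits an injection of $\kappa$ into $I$, contradicting the choice of $\kappa$ as the Hartogs number, and the assumption that $I$ has no maximal element must fail. The step I expect to be most delicate is the recursion itself: it must be phrased as a genuine set-indexed transfinite sequence (bounding the index by the Hartogs ordinal is precisely what legitimises this), and at each stage one must verify that $D_\alpha$ satisfies the paper's \emph{strict} definition of a chain rather than merely being a totally preordered set. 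An alternative that sidesteps the preorder bookkeeping is to pass first to the quotient of $I$ by the equivalence $i \sim j \iff (i \leq j \land j \leq i)$, which is an honest partial order on which chains, upper bounds, and maximal elements correspond to those in $I$, and then to run the very same argument there.
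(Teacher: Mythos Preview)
The paper does not prove this lemma at all: it is stated with a bare \texttt{\textbackslash qed} and treated as a standard foundational input, on a par with the Axiom of Choice itself. So there is no ``paper's own proof'' to compare against; you have supplied an argument where the paper deliberately supplies none.

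That said, your derivation from the Axiom of Choice is correct and carefully adapted to the paper's slightly nonstandard conventions. You are right to flag that the paper's definition of \emph{chain} requires the restricted relation to be a genuine total order (hence antisymmetric), not merely a total preorder, and your observation that the strictly increasing sequences $D_\alpha$ are automatically antisymmetric handles this cleanly: if $\beta < \gamma$ then $a_\gamma \not\leq a_\beta$ together with reflexivity forces $a_\beta \neq a_\gamma$, so no two distinct elements of $D_\alpha$ are mutually comparable in both directions. The use of the Hartogs number to bound the recursion is the standard device and is correctly invoked. The only point one might tighten is the formal packaging of the recursion: strictly speaking the transfinite recursion theorem wants a total function on initial segments, so one would extend $f$ arbitrarily to non-chains and then prove, by the same induction, that this extension is never exercised. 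This is a routine technicality and does not affect the substance of your argument. Your suggested alternative of passing to the antisymmetric quotient is also sound and arguably cleaner, since it reduces immediately to the textbook partial-order version.
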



\end{document}